\pgfplotsset{compat=1.16}
\crefname{hypothesis}{Hypothesis}{Hypotheses}
\crefname{lemma}{Lemma}{Lemmata}
\DeclareMathOperator{\diag}{diag}
\newcommand{\dd}{\text{d}} 
\newcommand{\change}[1]{%
  \begingroup%
  \color{blue}#1%
  \endgroup%
}%
\title{A Quasi-Optimal Factorization Preconditioner for Periodic Schrödinger Eigenstates in Anisotropically Expanding Domains\thanks{\LaTeX{} compiled on \today.
\funding{This work was supported by the German Research Foundation (DFG) under project 411724963.}}}
\author{
Benjamin Stamm\thanks{
  Applied and Computational Mathematics, RWTH Aachen University, 52062 Aachen, Germany
  (\email{best@acom.rwth-aachen.de}).
}
\and
Lambert Theisen
\thanks{
  Corresponding author. Applied and Computational Mathematics, RWTH Aachen University, 52062 Aachen, Germany
  (\email{theisen@acom.rwth-aachen.de}).
}
}
\begin{document}

\maketitle

\begin{abstract}
  This paper provides a provably quasi-optimal preconditioning strategy of the linear Schr\"odinger eigenvalue problem with periodic potentials for a possibly non-uniform spatial expansion of the domain. The quasi-optimality is achieved by having the iterative eigenvalue algorithms converge in a constant number of iterations for different domain sizes. In the analysis, we derive an analytic factorization of the spectrum and asymptotically describe it using concepts from the homogenization theory. This decomposition allows us to express the eigenpair as an easy-to-calculate cell problem solution combined with an asymptotically vanishing remainder. We then prove that the easy-to-calculate limit eigenvalue can be used in a shift-and-invert preconditioning strategy to bound the number of eigensolver iterations uniformly. Several numerical examples illustrate the effectiveness of this quasi-optimal preconditioning strategy.
\end{abstract}

\begin{keywords}
  Periodic Schrödinger Equation, Iterative Eigenvalue Solvers, Preconditioner, Asymptotic Eigenvalue Analysis, Factorization Principle, Directional Homogenization
\end{keywords}

\begin{AMS}
  65N25, 65F15, 65N30, 35B27, 35B40
\end{AMS}


\section{Introduction}\label{sec:introduction}
This paper considers the spectral problem for linear time-independent Schrödinger-type operators. Let us consider a parametrized family of \(d\)-dimensional boxes \(\Omega_L\) given by
\begin{align}
  \boldsymbol{z} &\in \Omega_L = {(0, L)}^{p} \times {(0, \ell)}^{q} = \Omega_{\boldsymbol{x}} \times \Omega_{\boldsymbol{y}} \subset \mathbb{R}^d
  ,
\end{align}
with coordinates \(\boldsymbol{z} := (\boldsymbol{x}, \boldsymbol{y}) = (x_1,\dots,x_p,y_1,\dots,y_q)\) and dimensions \(L, \ell \in \mathbb{R}\). Note that we provide an extension to arbitrary domains in \cref{ssec:experimentBarrierPotentialXDefects} and only keep the box shape for the early chapters of the analysis. We denote by $H^1_0(\Omega_L)$ the standard Sobolev space of index 1 with zero Dirichlet trace on \(\Omega_L\).
\par
Then we consider the eigenvalue problem: Find \((\phi_L, \lambda_L)\in (H^1_0(\Omega_L) \setminus \{0\})\times \mathbb R\), such that
\begin{align}\label{eq:schroedingerEquation}
  - \Delta \phi_L + V \phi_L &= \lambda_L \phi_L \quad \text{ in } \Omega_L
  .
\end{align}
Here, \(\phi_L\) and \(\lambda_L\) are the eigenfunctions and -values, respectively, while the function \(V\) encodes an external potential applied to the system. Typically, we are interested in computing some of the smallest eigenpairs of \cref{eq:schroedingerEquation}.
\par
For the analysis, we make the following assumptions about the potential:
\begin{description}
  \item[(A1)] The potential \(V\) is directional-periodic with a period of \(1\) in each expanding direction: \(V(\boldsymbol{x}, \boldsymbol{y}) = V\left(\boldsymbol{x} + \boldsymbol{i} , \boldsymbol{y}\right) \quad\forall (\boldsymbol{x},\boldsymbol{y}) \in \Omega_L, \boldsymbol{i} \in \mathbb{Z}^p\);
  \item[(A2)] The potential \(V\) is essentially bounded: \(V \in L^\infty(\Omega_L)\);
  \item[(A3)] The potential \(V\) is non-negative: \(V \ge 0 \text{ a.e.~in } \Omega_L\).
\end{description}
\par
Note that under the assumption (A2), we can always apply a constant spectral shift to the potential without affecting the eigenfunctions to fulfill (A3). Of course, (A1) is only chosen for simplicity and arbitrary periods are possible. Furthermore, we could extend the theory to general elliptic operators satisfying the properties of \cref{ssec:existence}. \cref{sfig:geometricSetup} presents the geometrical framework.
\par
We focus on the case of \(q\) fixed dimensions of length \(\ell\). In contrast, the other \(p\) dimensions expand with \(L \to \infty\). This geometric setup allows us to study chain-like (\(d=2, p=1\)) or plane-like (\(d=3, p=2\)) domains with \(L \to \infty\). These are the most common application cases. However, the setup is not limited to \(d \le 3\), and all results also hold in the general case.
\par
Suppose one aims to solve for the ground-state eigenpair (smallest eigenvalue). In that case, the convergence rate of typical numerical algorithms~\cite[p53]{baiTemplatesSolutionAlgebraic2000} depends on the fundamental ratio between the first and the second (\(\lambda_L^{(1)} < \lambda_L^{(2)}\)) eigenvalue
\begin{equation}
  r_L := |\lambda_L^{(1)}| / |\lambda_L^{(2)}| < 1
  .
\end{equation}
Our geometrical setup of \(\Omega_L\), with \({(0,L)}^p \to {(0,\infty)}^p\) and \({(0,\ell)}^q\) being fixed, can lead to a collapsing fundamental gap \(\lambda_L^{(1)} - \lambda_L^{(2)} \to 0\) and thus \(r_L \to 1\) as \(L \to \infty\). This will deteriorate the convergence rate in the limit. Therefore, the eigensolver routine needs more and more iterations to converge to a fixed tolerance as $L$ increases. To overcome this problem, we theoretically study the operator's spectrum in \cref{eq:schroedingerEquation} to construct a suitable shift-and-invert preconditioner~\cite[p193]{saadNumericalMethodsLarge2011}, such that the preconditioned fundamental gap \(r_L(\sigma) := |\lambda_L^{(1)} - \sigma| / |\lambda_L^{(2)} - \sigma|\) is uniformly bounded above by a constant \(C < 1\), for all \(L > L^*\).
For this strategy to work, we need to choose a shift \(\sigma\) based on the asymptotic behavior of the problem. As it turns out later, the quasi-optimal shift \(\sigma\) has to be the asymptotic eigenvalue \(\lambda_\infty := \lim_{L \to \infty} \lambda_L^{(1)}\).
Throughout this paper, we understand quasi-optimality in terms of eigensolver iterations, which belong to \(\mathcal{O}(1)\) for all \(L\). This complexity is optimal except for an \(L\)-independent multiplicative constant. Also, note that we follow~\cite[p193]{saadNumericalMethodsLarge2011} and understand preconditioning in the eigenvalue context as a mechanism to speed up the convergence of an iterative solver by applying a spectral transformation~\cite[p43]{baiTemplatesSolutionAlgebraic2000}.
%

\subsection{Motivation: Collapsing Fundamental Gap for the Laplace Eigenvalue Problem}\label{ssec:motivation}
Even for the simplest potential satisfying the assumptions (A1)--(A3), namely \(V(\boldsymbol{z})=0\), the fundamental gap \(r_L\) decreases if only a subset of directions in \(\Omega_L\) is increased. The simplicity of the Laplace eigenvalue problem allows us to highlight the main challenges by considering the explicitly known eigenvalues. For \(p=1\) and \(q=1\), the pure Laplace eigenvalue problem has eigenvalues \(\lambda_L^{(1)} = \pi^2/L^2 + \pi^2/\ell^2, \lambda_L^{(2)} = 4\pi^2/L^2 + \pi^2/\ell^2\). It is then evident that \(\lim_{L \to \infty} \lambda_L^{(1)} / \lambda_L^{(2)} = 1\) for \(L \to \infty\). 
Thus, this leads to a decreasing converge rate \(r_L\). The collapsing fundamental gap is visible in the eigenvalue lattice illustrated in \cref{sfig:laplaceSpectrum}. In such a representation, each point represents an eigenvalue \(\lambda_L^{(m)}\), and its distance to the origin corresponds to \(\text{sqrt}{(\lambda_L^{(m)}/\pi)}\). All eigenvalues will form continuous \(x\)-parallel lines for the asymptotic case of \(L \to \infty\). For this Laplace eigenvalue problem, a shift of \(\sigma = \lambda_\infty = \pi^2/l^2\) would lead to \(\lim_{L \to \infty} r_L(\sigma) = 1/4 < 1\).
\par
This simple example serves as a motivation. However, to give a systematic approach to choosing the asymptotic correct shift \(\sigma\) in the general case, we develop a framework for characterizing the asymptotic behavior of the spectral properties for Schrödinger operators with periodic potentials satisfying (A1)--(A3). Knowing the asymptotic behavior will allow solving the algebraic eigenvalue problem within only a constant number of eigensolver iterations. 
\begin{figure}[t]
  \centering
  \subfloat[
    Geometric setup of \(\Omega_L\).\label{sfig:geometricSetup}
  ]{%

\tdplotsetmaincoords{70}{120}%
\begin{tikzpicture}[tdplot_main_coords,line join=bevel, scale=0.55]%
\pgfmathsetmacro{\N}{5};%
\pgfmathsetmacro{\elll}{2};%
\pgfmathsetmacro{\eps}{0.5}; 
\filldraw[%
  fill=white%
]%
(-\eps+\eps-\eps,-\eps+\eps-\eps,\elll) -- (-\eps+\eps+\eps-\eps,-\eps+\eps-\eps,\elll) -- (-\eps+\eps+\eps-\eps,-\eps+\eps+\eps-\eps,\elll) -- (-\eps+\eps-\eps,-\eps+\eps+\eps-\eps,\elll) -- cycle;
\filldraw[
  fill=white
]
(-\eps+\eps-\eps,-\eps+\eps+\eps-\eps,0) -- (-\eps+\eps+\eps-\eps,-\eps+\eps+\eps-\eps,0) -- (-\eps+\eps+\eps-\eps,-\eps+\eps+\eps-\eps,\elll) -- (-\eps+\eps-\eps,-\eps+\eps+\eps-\eps,\elll) -- cycle;
\filldraw[
  fill=white
]
(-\eps+\eps+\eps-\eps,-\eps+\eps-\eps,0) -- (-\eps+\eps+\eps-\eps,-\eps+\eps-\eps,\elll) -- (-\eps+\eps+\eps-\eps,-\eps+\eps+\eps-\eps,\elll) -- (-\eps+\eps+\eps-\eps,-\eps+\eps+\eps-\eps,0) -- cycle;

\foreach \i in {1,...,\N}{
    \pgfmathsetmacro\iisint{abs(Mod(\i,1))<0.001}
    \pgfmathsetmacro\idist{\iisint*1+Mod(\i,1)}
    \pgfmathsetmacro\jisint{abs(Mod(\eps,1))<0.001}
    \pgfmathsetmacro\jdist{\jisint*1+Mod(\eps,1)}
    \pgfmathsetmacro\j{\eps}
    \filldraw[
      fill=white
    ]
    (0+\i-\idist,-\eps+\j-\jdist,\elll) -- (0+\idist+\i-\idist,-\eps+\j-\jdist,\elll) -- (0+\idist+\i-\idist,-\eps+\jdist+\j-\jdist,\elll) -- (0+\i-\idist,-\eps+\jdist+\j-\jdist,\elll) -- cycle;
    \filldraw[
      fill=white
    ]
    (0+\i-\idist,-\eps+\jdist+\j-\jdist,0) -- (0+\idist+\i-\idist,-\eps+\jdist+\j-\jdist,0) -- (0+\idist+\i-\idist,-\eps+\jdist+\j-\jdist,\elll) -- (0+\i-\idist,-\eps+\jdist+\j-\jdist,\elll) -- cycle;
    \filldraw[
      fill=white
    ]
    (0+\idist+\i-\idist,-\eps+\j-\jdist,0) -- (0+\idist+\i-\idist,-\eps+\j-\jdist,\elll) -- (0+\idist+\i-\idist,-\eps+\jdist+\j-\jdist,\elll) -- (0+\idist+\i-\idist,-\eps+\jdist+\j-\jdist,0) -- cycle;
}
  \foreach \j in {1,2,...,\N}{
    \pgfmathsetmacro\iisint{abs(Mod(\eps,1))<0.001}
    \pgfmathsetmacro\idist{\iisint*1+Mod(\eps,1)}
    \pgfmathsetmacro\jisint{abs(Mod(\j,1))<0.001}
    \pgfmathsetmacro\jdist{\jisint*1+Mod(\j,1)}
    \pgfmathsetmacro\i{\eps}
    \filldraw[
      fill=white
    ]
    (-\eps+\i-\idist,0+\j-\jdist,\elll) -- (-\eps+\idist+\i-\idist,0+\j-\jdist,\elll) -- (-\eps+\idist+\i-\idist,0+\jdist+\j-\jdist,\elll) -- (-\eps+\i-\idist,0+\jdist+\j-\jdist,\elll) -- cycle;
    \filldraw[
      fill=white
    ]
    (-\eps+\i-\idist,0+\jdist+\j-\jdist,0) -- (-\eps+\idist+\i-\idist,0+\jdist+\j-\jdist,0) -- (-\eps+\idist+\i-\idist,0+\jdist+\j-\jdist,\elll) -- (-\eps+\i-\idist,0+\jdist+\j-\jdist,\elll) -- cycle;
    \filldraw[
      fill=white
    ]
    (-\eps+\idist+\i-\idist,0+\j-\jdist,0) -- (-\eps+\idist+\i-\idist,0+\j-\jdist,\elll) -- (-\eps+\idist+\i-\idist,0+\jdist+\j-\jdist,\elll) -- (-\eps+\idist+\i-\idist,0+\jdist+\j-\jdist,0) -- cycle;
  }

\foreach \i in {0.5,1,2,...,\N}{
  \foreach \j in {1,2,...,\N}{
    \pgfmathsetmacro\iisint{abs(Mod(\i,1))<0.001}
    \pgfmathsetmacro\idist{\iisint*1+Mod(\i,1)}
    \pgfmathsetmacro\jisint{abs(Mod(\j,1))<0.001}
    \pgfmathsetmacro\jdist{\jisint*1+Mod(\j,1)}
    \filldraw[
      fill=white
    ]
    (0+\i-\idist,0+\j-\jdist,\elll) -- (0+\idist+\i-\idist,0+\j-\jdist,\elll) -- (0+\idist+\i-\idist,0+\jdist+\j-\jdist,\elll) -- (0+\i-\idist,0+\jdist+\j-\jdist,\elll) -- cycle;
    \filldraw[
      fill=white
    ]
    (0+\i-\idist,0+\jdist+\j-\jdist,0) -- (0+\idist+\i-\idist,0+\jdist+\j-\jdist,0) -- (0+\idist+\i-\idist,0+\jdist+\j-\jdist,\elll) -- (0+\i-\idist,0+\jdist+\j-\jdist,\elll) -- cycle;
    \filldraw[
      fill=white
    ]
    (0+\idist+\i-\idist,0+\j-\jdist,0) -- (0+\idist+\i-\idist,0+\j-\jdist,\elll) -- (0+\idist+\i-\idist,0+\jdist+\j-\jdist,\elll) -- (0+\idist+\i-\idist,0+\jdist+\j-\jdist,0) -- cycle;
  }
}

\filldraw[
  fill=blue!25
]
(0+\N-1,0+\N-1,\elll) -- (1+\N-1,0+\N-1,\elll) -- (1+\N-1,1+\N-1,\elll) -- (0+\N-1,1+\N-1,\elll) -- cycle;
\filldraw[
  fill=blue!25
]
(0+\N-1,1+\N-1,0) -- (1+\N-1,1+\N-1,0) -- (1+\N-1,1+\N-1,\elll) -- (0+\N-1,1+\N-1,\elll) -- cycle;
\filldraw[
  fill=blue!25
]
(1+\N-1,0+\N-1,0) -- (1+\N-1,0+\N-1,\elll) -- (1+\N-1,1+\N-1,\elll) -- (1+\N-1,1+\N-1,0) -- cycle;

\draw[
  red, thick,
] (-\eps,-\eps,\elll) -- (\N,-\eps,\elll) -- (\N,\N,\elll) -- (-\eps,\N,\elll) -- cycle;
\draw[
  red, thick,
] (\N,-\eps,0) -- (\N,-\eps,\elll) -- (\N,\N,\elll) -- (\N,\N,0) -- cycle;
\draw[
  red, thick,
] (\N,\N,0) -- (\N,\N,\elll) -- (-\eps,\N,\elll) -- (-\eps,\N,0) -- cycle;

\draw[
  blue
]
(0+\N-1,0+\N-1,\elll) -- (1+\N-1,0+\N-1,\elll) -- (1+\N-1,1+\N-1,\elll) -- (0+\N-1,1+\N-1,\elll) -- cycle;
\draw[
  blue
]
(0+\N-1,1+\N-1,0) -- (1+\N-1,1+\N-1,0) -- (1+\N-1,1+\N-1,\elll) -- (0+\N-1,1+\N-1,\elll) -- cycle;
\draw[
  blue
]
(1+\N-1,0+\N-1,0) -- (1+\N-1,0+\N-1,\elll) -- (1+\N-1,1+\N-1,\elll) -- (1+\N-1,1+\N-1,0) -- cycle;

\node[blue] at (\N,\N-0.5,\elll/2) {$\Omega_1$};
\node[red] at (\N,1,-0.5) {$\Omega_L$};

\draw[
  >=latex,
  <->
] (-\eps,\N+0.5,0) -- ++(0,0,\elll) node[anchor=west, midway]{$\ell$};
\draw[] (-\eps,\N+0.5-0.25,\elll) -- ++(0,0.5,0);
\draw[] (-\eps,\N+0.5-0.25,0) -- ++(0,0.5,0);
\draw[
  >=latex,
  <->
] (-\eps,-0.5-\eps,\elll) -- ++(\N+\eps,0,0) node[anchor=south east, midway]{$L$};
\draw[] (-\eps,-0.5-\eps-0.25,\elll) -- ++(0,0.5,0);
\draw[] (\N,-0.5-\eps-0.25,\elll) -- ++(0,0.5,0);
\draw[
  >=latex,
  <->
] (-0.5-\eps,-\eps,\elll) -- ++(0,\N+\eps,0) node[anchor=south, midway]{$L$};
\draw[] (-0.5-\eps-0.25,-\eps,\elll) -- ++(0.5,0,0);
\draw[] (-0.5-\eps-0.25,\N,\elll) -- ++(0.5,0,0);

\draw[->]
  (\N,\N,\elll) -- ++ (-\N-1.0-\eps,0,0) node[anchor=south]{$x_1$};
\draw[->]
  (\N,\N,\elll) -- ++(0,-\N-1.0-\eps,0) node[anchor=north]{$x_2$};
\draw[->]
  (\N,\N,\elll) -- ++(0,0,-\elll-1.0) node[anchor=west]{$y_1$};
\end{tikzpicture}

  }
  \subfloat[
    Eigenvalue lattice for \(\Omega_L \subset \mathbb{R}^2, V=0\).\label{sfig:laplaceSpectrum}
  ]{%


\begin{tikzpicture}[scale=0.85, rotate=0]

  \def\Lx{1.5};
  \def\Ly{1.0};
  \def\iMax{7}
  \def\jMax{3}
  \def\ptTh{2.5pt}
  \def\xMax{5}
  \def\yMax{3.5}

  \draw [<->,thick] (0,\yMax) node (yaxis) [above] {$y = \frac{j}{\ell}$}
  |- (\xMax,0) node (xaxis) [right] {$x = \frac{i}{L}$};

  \draw (\xMax,\yMax) node [draw,rectangle,above left] {$L=\Lx$, $\ell=\Ly$};

  \foreach \i in {0, ..., \iMax} {
    \foreach \j in {0, ..., \jMax} {
      \coordinate (\i-\j) at ({\i/(\Lx)},{\j/(\Ly)});
    }
  }

  \def\ext{20pt}
  \foreach \j in {1, ..., \jMax} {
    \draw[dashed] (0-\j) node[left] {} -- ($(\iMax-\j)+(\ext,0)$) node[below] {};
  }

  \foreach \i in {1, ..., \iMax} {
    \draw [thick] ($(\i-0)+(0,2pt)$) -- ($(\i-0)-(0,2pt)$) node [below] {$\frac{\i}{L}$};
  }
  \foreach \j in {1, ..., \jMax} {
    \draw [thick] ($(0-\j)+(2pt,0)$) -- ($(0-\j)-(2pt,0)$) node [left] {$\frac{\j}{\ell}$};
  }

  \draw[thick] (0-0) node[left] {} -- (3-2) node[pos=0.73, right] {$\sqrt{\lambda_L^{(m)}}/\pi={\Vert (x,y) \Vert}_2$};

  \foreach \i in {1, ..., \iMax} {
    \foreach \j in {1, ..., \jMax} {
      \filldraw[red, draw=black] (\i-\j) circle (\ptTh);
    }
  }

  \filldraw[black, draw=black] (0-1) circle (\ptTh+0.75) node[above right] {$\lambda_\infty$};

  \def\alen{20pt}
  \def\adis{7pt}
  \draw[thick,->,red] ($(\iMax-\jMax)+(0,0)-(0,\adis)$) -- ($(\iMax-\jMax)-(\alen,0)-(0,\adis)$) node [midway,below] {$L \to \infty$};

\end{tikzpicture}

  }
  \caption{
    \cref{sfig:geometricSetup}: Geometric setup with \(p=2\) expanding directions with length \(L=5.5\) and \(q=1\) fixed dimensions with length \(\ell=2\).
    \cref{sfig:laplaceSpectrum}: The Dirichlet Laplacian spectrum on a rectangle domain \(\Omega_L = (0,L) \times (0,l)\) mapped to an eigenvalue lattice.
  }
\end{figure}

\subsection{State-of-the-Art and Context}\label{ssec:stateoftheart}
We can embed our results into existing research for three different aspects \textendash\ the considered model equation, the geometrical setup with the present periodic potential, and other mathematical analyses for related equations.
\par
First, the Schrödinger equation \cref{eq:schroedingerEquation} describes the stationary states of the wave function \(\phi_L\) for a quantum-mechanical system influenced by an external potential~\(V\). Therefore, example applications naturally arise in computational chemistry and quantum mechanics. Since the present model is one of the simpler models, it is only suitable for the direct simulation of basic quantum systems. In more elaborate electronic structure calculations, a nonlinear version of the Schrödinger equation is used. However, there is always a need to solve systems similar to \cref{eq:schroedingerEquation} in self-consistent field (SCF) iterations~\cite{heidGradientFlowFinite2021,cancesConvergenceAnalysisDirect2021,herbstBlackboxInhomogeneousPreconditioning2021} or other iteration schemes to solve these nonlinear Schrödinger-type equations~\cite{antoineGPELabMatlabToolbox2014,antoineGPELabMatlabToolbox2015,antoineEfficientSpectralComputation2017}.
One of such examples is the Bose--Einstein condensate, either modeled with random or disorder potentials~\cite{altmannLocalizedComputationEigenstates2019,altmannQuantitativeAndersonLocalization2020} or modeled with the Gross--Pitaevskii eigenvalue problem~\cite{altmannJmethodGrossPitaevskii2021,altmannLocalizationDelocalizationGround2022,altmannQuantitativeAndersonLocalization2020,henningSobolevGradientFlow2020}. Other applications for the same equation \cref{eq:schroedingerEquation} arise in studying the power distribution in a nuclear reactor core~\cite{allaireHomogenizationSpectralProblem2000,allaireAnalyseAsymptotiqueSpectrale1997,freitagConvergenceInexactInverse2007}.
\par
Second, when it comes to the geometric setup of only a subset of dimensions expanding, applications arise, for example, in material science to study the electronic properties of plane-like, layered~\cite{cazeauxEnergyMinimizationTwo2020,carrRelaxationDomainFormation2018} or chain-like structures, such as carbon nanotubes~\cite{ajayanApplicationsCarbonNanotubes2001} or polymers chains~\cite{vuConjugatedPolymersSystematic2018}.
\par
Third, from a mathematical point of view, the study of elliptic operators in the context of source~\cite{dongDirectionalHomogenizationElliptic2020,dongRegularityEllipticSystems2018,suslinaHomogenizationPeriodicElliptic2004,chipotAsymptoticBehaviourElliptic2004,chipotASYMPTOTICBEHAVIOURSOLUTION2002,chipotGoesInfinityUpdate2014,chipotNumericalApproximationPoisson2021} or eigenvalue problems~\cite{zhangEstimatesEigenvaluesEigenfunctions2021,cancesSecondorderHomogenizationPeriodic2021} with homogenization is closely related since \(V\) is periodic (at least directional in our setup). Also, for eigenvalue problems with periodic coefficients, results in~\cite{chipotAsymptoticsEigenstatesElliptic2013,chipotEigenvaluesEigenfunctionsDomains2007,chipotAsymptoticBehaviourEigenmodes2008} show the presence of an asymptotic limit when the domain expands in some directions to infinity. Finally, from a technical point of view, our analysis in \cref{sec:factorization} extends aspects of the work by Allaire et al.\ in~\cite{allaireHomogenizationLocalization1D2002,allaireHomogenizationSpectralProblem2000,allaireHomogenizationSchrodingerEquation2005,allaireHomogenizationCriticalitySpectral1999,allaireAnalyseAsymptotiqueSpectrale1997,allaireBlochWaveHomogenization1998}. Especially the concept of factorization will be one of the main techniques in our analysis. It allows us to analytically describe the spectrum of the system in terms of easier cell problems. This idea traces back to~\cite{vanninathanHomogenizationEigenvalueProblems1981,kesavanHomogenizationEllipticEigenvalue1979,kesavanHomogenizationEllipticEigenvalue1979a}.

\subsection{Contribution and Main Results}\label{ssec:contribution}
This work aims to provide a numerical framework to solve the eigenvalue problem \cref{eq:schroedingerEquation} in a fixed number of eigensolver iterations for all domain sizes \(L \to \infty\). We, therefore, propose a shift-and-invert strategy with a quasi-optimally chosen shift. The theoretical derivation of this particular shift is based on the following factorization of the eigenfunctions (see \cref{thm:factorization} for a more precise statement)
\begin{alignat}{3}
  \phi_L^{(m)}
  &=
  \psi \cdot u_{y,1} \cdot u_{y,2}^{(m)}
  &&=
  \varphi_y \cdot u_{y,2}^{(m)}
  ,
  \label{eq:mainResult1}
  \\
  \lambda_L^{(m)}
  &=
  \lambda_{\psi} + \lambda_{u_{y,1}} + \lambda_{u_{y,2}}^{(m)}
  &&=
  \lambda_{\varphi_y} + \lambda_{u_{y,2}}^{(m)} = \lambda_{\varphi_y} + \mathcal{O}(1/L^2)
  .
  \label{eq:mainResult2}
\end{alignat}
The above characterization highlights that we can simply use \(\lambda_{\varphi_y}\) as the quasi-optimal shift since we will show that the remaining term \( \lambda_{u_{y,2}}^{(m)}\) tends to zero as \(L \to \infty\) for all~\(m\). The \(\mathcal{O}(1/L^2)\) contribution in \cref{eq:mainResult2} is not uniform in \(m\) since it depends on the \(m\)-th eigenvalue of a homogenized equation, as shown later in \cref{thm:asymptoticBehaviorOfXDirectionNew}. In contrast to existing literature, this statement considers the case where only a subset of dimensions expands, and the periodicity is directional, which is essential given the potential practical applications. The eigenpair \((\varphi_y, \lambda_{\varphi_y})\) can be obtained in a constant time since it does not depend on \(L \to \infty\) as it is a solution to a fixed-size spectral cell problem. We then show that this eigenpair is the asymptotic limit as
\begin{equation}
  \lim_{L \to \infty} \lambda_L^{(m)} = \lambda_{\varphi_y}
  .
\end{equation}
These results, then, directly imply that the preconditioned fundamental ratio \(r_L\) is uniformly bounded from above by a constant for all \(L > L^*\), which is smaller than one (see \cref{thm:fundamentalRatioBoundedNewNew} for a more precise statement). Since the convergence speed of the iterative eigensolvers depends on precisely this ratio, they converge in a constant number of iterations, and our goal is achieved.
\par
The main challenges arise in the analysis and the quasi-optimality proof of the preconditioner.
Using factorization results to construct a quasi-optimal preconditioner for an anisotropic domain size increase, which can be computed in \(\mathcal{O}(1)\), is not covered, up to our knowledge, in the existing literature. Moreover, although the idea of factorization is not new (see the references in \cref{ssec:stateoftheart}), it was not yet applied in the context of weighted and thus potentially degenerate Sobolev spaces. However, precisely this setup of degenerate weights is necessary for our quasi-optimality analysis since the zero Dirichlet boundary conditions on the \(\boldsymbol{y}\)-boundary significantly contribute to the asymptotic behavior of the spectrum.
In addition, proving quasi-optimality also requires uniform bounds of the preconditioned system's fundamental ratio. Thus, another challenge is interpreting the expanding problem as a homogenization problem in a degenerate situation. This observation allows us to explicitly determine the asymptotic behavior of the \(m\)-dependent contribution in \cref{eq:mainResult1,eq:mainResult2}. However, unlike the classical homogenization theory, our homogenized limit is purely determined by an \(p\)- rather than an \((p+q)\)-dimensional problem, which seems to be a unique specialty of anisotropic expansion problems.

\subsection{Outline}
In \cref{sec:factorization}, we present the theoretical framework for the factorization approach in \cref{ssec:factorization}, which allows us to consider the remaining simplified problems using the theory of homogenization in \cref{ssec:homogenization} to derive quasi-optimality statements. We then discretize and solve the eigensystem in \cref{sec:discretizationAndIterativeEigensolvers} and show that the theoretical results also hold when specific subspace properties are met in the discrete setting. Next, \cref{sec:numericalExamples} presents various numerical examples and shows the relevance of the method for solving practical problems. Finally, we conclude with some remarks and point out future work in \cref{sec:conlusion}.

\section{Factorization and Homogenization of the Model Problem}\label{sec:factorization}
This chapter will use factorization and homogenization to derive the asymptotic spectrum, which allows us to specify the limit eigenvalue \(\lambda_{\varphi_y} = \lim_{L \to \infty} \lambda_L^{(m)}\).

\subsection{Existence and Regularity Results}\label{ssec:existence}
The second-order partial differential operator in \cref{eq:schroedingerEquation} is self-adjoint (by the symmetry of the diffusion matrix \(\delta_{ij}\)), is positive-definite (by ellipticity/coercivity of the Laplacian plus a non-negative potential), and has bounded coefficients (since \(V \in L^\infty(\Omega_L)\)). We can therefore recall the classical existence results from~\cite{gilbargEllipticPartialDifferential2001,vanninathanHomogenizationEigenvalueProblems1981,henningSobolevGradientFlow2020} to establish the well-posedness of our problem, that there exists a sequence (\(m=1,\dots,\infty\)) of eigenvalues with finite multiplicity \(\lambda_L^{(m)}\) and a sequence of eigenfunctions \(\phi_L^{(m)}\) (orthogonal basis of \(H_0^1(\Omega_L)\)) such that \(0 < \lambda_L^{(1)} < \lambda_L^{(2)} \le \lambda_L^{(3)} \le \dots \to \infty\) and \(\phi_L^{(1)} > 0\) a.e.\ in \(\Omega_L\).

\subsection{Factorization of the Eigenfunctions and Eigenvalues}\label{ssec:factorization}
Our next step is to establish factorizations to solve the eigenvalue problem \cref{eq:schroedingerEquation}. Thus, we describe splitting an eigenfunction into a product of two or more functions and splitting an eigenvalue into the sum of two or more eigenvalues. This splitting can be seen as a generalization to the separation of variables for the pure Laplacian case.
\par
Let \(\mathcal{D}(\Omega_L) = C^\infty_{\text{c}}(\Omega_L)\) be the space of compactly supported test functions on \(\Omega_L\) and \(\rho\) a weight function (measurable and positive a.e.~in \(\Omega_L\)). We then use the weighted Sobolev~\cite{kutnerApplicationsWeightedSobolev1987,vanninathanHomogenizationEigenvalueProblems1981} spaces as
\begin{align}\label{eq:weightedSobolevSpace}
  H^1(\Omega_L;\rho)
  &=
  \left\{
    u \in \mathcal{D}'(\Omega_L)
    \;
    \middle|\
    {\|u\|}_{ H^1(\Omega_L;\rho)} < \infty
  \right\}
  ,
  \\
  H_{\mathcal{B}_{\boldsymbol{x}},\mathcal{B}_{\boldsymbol{y}}}^1(\Omega_L;\rho)
  &=
  \left\{
    u \in H^1(\Omega_L;\rho)
    \;
    \middle|\
    \left\{
      \begin{aligned}
        \mathcal{B}_{\boldsymbol{x}}(u) = 0 \text{ on } \partial {(0,L)}^p \times {(0,\ell)}^q
        \\
        \mathcal{B}_{\boldsymbol{y}}(u) = 0 \text{ on } {(0,L)}^p \times \partial {(0,\ell)}^q
      \end{aligned}
      \right.
  \right\}
  ,
\end{align}
for some general boundary operators \(\mathcal{B}_{\boldsymbol{x}},\mathcal{B}_{\boldsymbol{y}}\), which are equipped with the weighted norm
\begin{equation}
\begin{gathered}
  {\|\cdot\|}_{ H^1(\Omega_L;\rho)}
  =
  \sqrt{{\|\nabla \cdot\|}_{ L^2(\Omega_L;\rho)}^2 + {\|\cdot\|}_{ L^2(\Omega_L;\rho)}^2}
  ,
\end{gathered}
\end{equation}
using \({\|\cdot\|}_{ L^2(\Omega_L;\rho)}^2 := {\|\sqrt{\rho}\cdot\|}_{ L^2(\Omega_L)}^2\) in the classical \(L^2\)-sense. The corresponding scalar product is \({\langle \cdot, \cdot \rangle}_{L^2(\Omega_L;\rho)} = {\langle \rho \cdot, \cdot \rangle}_{L^2(\Omega_L)}\). For the boundary operators, we use \(\mathcal{B}_{\boldsymbol{x}}, \mathcal{B}_{\boldsymbol{y}} \in \{\mathcal{B}_d, \mathcal{B}_n, \mathcal{B}_\#\}\) with
\begin{align}
  \text{Dirichlet: } \mathcal{B}_d(u)(\boldsymbol{z})
  &=
  u(\boldsymbol{z})
  ,
  \\
  \text{Neumann: } \mathcal{B}_n(u)(\boldsymbol{z})
  &=
  \rho(\boldsymbol{z}) \nabla u(\boldsymbol{z}) \cdot \boldsymbol{n}(\boldsymbol{z})
  ,
  \\
  \text{Periodic: } \mathcal{B}_\#(u)(\boldsymbol{z})
  &=
  u(\boldsymbol{z}) - u\left({(z_i - n_i(\boldsymbol{z}) L)}_{i=1}^{p}, {(z_i - n_i(\boldsymbol{z}) \ell)}_{i=p+1}^{q}\right)
  ,
\end{align}
where the unit normal-vector is denoted by \(\boldsymbol{n}(\boldsymbol{z})\) for $\boldsymbol{z}\in\partial\Omega_L$.
\par
In the following, we use multiple eigenvalue problems and their solutions. Therefore, we unify the notation and introduce the abstract notation:
\begin{definition}[Prototype of a Schrödinger eigenvalue problem]\label{def:solutionPrototype}
  For \(\Omega_L = {(0,L)}^p \times {(0,\ell)}^q \subset \mathbb{R}^d\), \(0 \le \rho,V \in L^\infty(\Omega_L)\), and \(1/\rho \in L^1_{\textnormal{loc}}(\Omega_L)\), we define
  \begin{equation}
    \left(
      u^{(m)}_{\mathcal{B}_{\boldsymbol{x}}, \mathcal{B}_{\boldsymbol{y}}, \rho, V}(\Omega_L)
      ,
      \lambda^{(m)}_{\mathcal{B}_{\boldsymbol{x}}, \mathcal{B}_{\boldsymbol{y}}, \rho, V}(\Omega_L)
    \right)
    ,
  \end{equation}
  to be the \(m\)-th eigenpair (including multiplicities) of the generalized Schrödinger-type eigenvalue problem: Find \((u,\lambda) \in ( H_{\mathcal{B}_{\boldsymbol{x}},\mathcal{B}_{\boldsymbol{y}}}^1(\Omega_L;\rho) \setminus \{0\} ) \times \mathbb{R}\), such that
  \begin{equation}\label{eq:prototypeEigenvalueProb}
    - \nabla \cdot \left( \rho \nabla u \right) + V u = \lambda \rho u \text{ in } \Omega_L
    .
  \end{equation}
\end{definition}
\begin{remark}\label{rem:regularWeightImpliesClassicalSobolevSpaces}
  If the weight \(\rho\) is zero only at the boundary of \(\Omega_L\), then we have \(1/\rho \in L^1_{\textnormal{loc}}(\Omega_L)\), which implies that \(H_{\mathcal{B}_{\boldsymbol{x}},\mathcal{B}_{\boldsymbol{y}}}^1(\Omega_L;\rho)\) is a Banach space~\cite[p235]{kutnerApplicationsWeightedSobolev1987}. On the other hand, in the case of \(\rho\) being bounded from above and uniformly positive, i.e.,~\(0 < \mathsf{c} < \rho < \mathsf{C}\) a.e.\ in \(\Omega_L\), the \(\rho\)-weighted Sobolev space from \cref{eq:weightedSobolevSpace} is equivalent to the classical Sobolev space \(H_{\mathcal{B}_{\boldsymbol{x}},\mathcal{B}_{\boldsymbol{y}}}^1(\Omega_L;\rho) = H_{\mathcal{B}_{\boldsymbol{x}},\mathcal{B}_{\boldsymbol{y}}}^1(\Omega_L)\), and we omit \(\rho\) in the notation.
\end{remark}
\begin{remark}[Weak form]\label{rem:weakFormPrototype}
  The corresponding weak formulation of \cref{eq:prototypeEigenvalueProb} reads: Find \((u,\lambda) \in ( H_{\mathcal{B}_{\boldsymbol{x}},\mathcal{B}_{\boldsymbol{y}}}^1(\Omega_L;\rho) \setminus \{0\} ) \times \mathbb{R}\) such that
  \begin{equation}\label{eq:weakFormPrototype}
    \forall v \in H_{\mathcal{B}_{\boldsymbol{x}},\mathcal{B}_{\boldsymbol{y}}}^1(\Omega_L;\rho)
    : \quad
    \int_{\Omega_L} \rho \nabla u \cdot \nabla v \dd \boldsymbol{z}
    +
    \int_{\Omega_L} V u v \dd \boldsymbol{z}
    =
    \lambda \int_{\Omega_L} \rho u v \dd \boldsymbol{z}
    .
  \end{equation}
\end{remark}
\begin{remark}[Min-max characterization]
  Since the weight \(\rho\) is a scalar function, \cref{eq:prototypeEigenvalueProb} is self-adjoint for the presented boundary conditions, and we can express the eigenpair through the min-max characterization (c.f.~\cite{kesavanHomogenizationEllipticEigenvalue1979,courantMethodsMathematicalPhysics1989a}):
  \begin{align}
    \lambda^{(m)}
    &=
    \min_{
      \substack{W_m \subset H_{\mathcal{B}_{\boldsymbol{x}},\mathcal{B}_{\boldsymbol{y}}}^1(\Omega_L;\rho) \\ \text{dim} W_m = m}
    }
    \max_{
      \substack{u \in W_m \\ u \ne 0}
    }
    \mathcal{R}_{\rho, V}(u)
    ,
  \end{align}
  with the Rayleigh quotient, defined by
  \begin{equation}
    \mathcal{R}_{\rho,V}(u)
    =
    \frac{
      \int_{\Omega_L} \rho(\boldsymbol{z}) \nabla u(\boldsymbol{z}) \cdot \nabla u(\boldsymbol{z}) \dd \boldsymbol{z} + \int_{\Omega_L} V(\boldsymbol{z}) u^2(\boldsymbol{z}) \dd \boldsymbol{z}
    }{
      \int_{\Omega_L} \rho(\boldsymbol{z}) u^2(\boldsymbol{z}) \dd \boldsymbol{z}
    }
    ,
  \end{equation}
  which is identical for all the considered boundary conditions \(\mathcal{B}_{\boldsymbol{x}},\mathcal{B}_{\boldsymbol{y}} \in \{ \mathcal{B}_d, \mathcal{B}_n, \mathcal{B}_\#\}\) of \cref{def:solutionPrototype}.
\end{remark}
We define \(E^\#_{\boldsymbol{x}} : H^1_{\mathcal{B}_\#,\mathcal{B}_{\boldsymbol{y}}}(\Omega_1;\rho) \to H^1_{\mathcal{B}_\#,\mathcal{B}_{\boldsymbol{y}}}(\Omega_L;\rho)\) as the periodic extension operator in the \(\boldsymbol{x}\)-direction for an \(\boldsymbol{x}\)-periodic weight \(\rho\) and \(\mathcal{B}_{\boldsymbol{y}} \in \{ \mathcal{B}_d, \mathcal{B}_n, \mathcal{B}_\#\}\).
We are now prepared to state our first main theoretical result:
\begin{theorem}[Factorzation of eigenfunctions and summation of eigenvalues]\label{thm:factorization}
  The \(m\)-th eigenfunction of the Schrödinger eigenvalue problem \cref{eq:schroedingerEquation} can be factorized into
  \begin{alignat}{3}
    u_{\mathcal{B}_d,\mathcal{B}_d,1,V}^{(m)}
    =
    \psi \cdot u^{(m)}
    \label{eq:facVec1}
    &&&=
    \psi \cdot u_{x,1} \cdot u_{x,2}^{(m)}
    &=
    \varphi_x \cdot u_{x,2}^{(m)}
    \\
    &&&=
    \psi \cdot u_{y,1} \cdot u_{y,2}^{(m)}
    \label{eq:facVec3}
    &=
    \varphi_y \cdot u_{y,2}^{(m)}
  \end{alignat}
  while the \(m\)-th eigenvalue can be summed correspondingly as
  \begin{alignat}{3}
    \lambda_{\mathcal{B}_d,\mathcal{B}_d,1,V}^{(m)}
    =
    \lambda_{\psi} + \lambda_{u}^{(m)}
    &&&=
    \lambda_{\psi} + \lambda_{u_{x,1}} + \lambda_{u_{x,2}}^{(m)}
    &=
    \lambda_{\varphi_x} + \lambda_{u_{x,2}}^{(m)}
    \label{eq:fac1}
    \\
    &&&=
    \lambda_{\psi} + \lambda_{u_{y,1}} + \lambda_{u_{y,2}}^{(m)}
    &=
    \lambda_{\varphi_y} + \lambda_{u_{y,2}}^{(m)}
    \label{eq:fac3}
  \end{alignat}
  where
  \begin{alignat}{3}
    \psi
    &= E^\#_{\boldsymbol{x}} u_{\mathcal{B}_\#,\mathcal{B}_\#,1,V}^{(1)}(\Omega_1)
    ,
    \;
    & u^{(m)}
    &= u_{\mathcal{B}_d,\mathcal{B}_d,\psi^2,0}^{(m)}(\Omega_L)
    ,
    \;
    &
    \\
    u_{x,1}
    &= u_{\mathcal{B}_d,\mathcal{B}_\#,\psi^2,0}^{(1)}(\Omega_L)
    ,
    \;
    & u_{x,2}^{(m)}
    &= u_{\mathcal{B}_n,\mathcal{B}_d,\varphi_x^2,0}^{(m)}(\Omega_L)
    ,
    \;
    & \varphi_x
    &= u_{\mathcal{B}_d,\mathcal{B}_\#,1,V}^{(1)}(\Omega_L)
    ,
    \;
    \\
    u_{y,1}
    &= E^\#_{\boldsymbol{x}} u_{\mathcal{B}_\#,\mathcal{B}_d,\psi^2,0}^{(1)}(\Omega_1)
    ,
    \;
    & u_{y,2}^{(m)}
    &= u_{\mathcal{B}_d,\mathcal{B}_n,\varphi_y^2,0}^{(m)}(\Omega_L)
    ,
    \;
    & \varphi_y
    &= E^\#_{\boldsymbol{x}} u_{\mathcal{B}_\#,\mathcal{B}_d,1,V}^{(1)}(\Omega_1)
    .
  \end{alignat}
\end{theorem}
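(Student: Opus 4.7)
The plan is to verify each factorization by substituting the multiplicative ansatz into the (weak form of the) eigenvalue equation, using the PDE satisfied by the prefactor to cancel terms, and then reading off the resulting weighted eigenproblem and its boundary conditions for the quotient. This classical change of variables was pioneered by Vanninathan and Kesavan for the Laplacian with one weight; the twist here is that the prefactors satisfy cell problems with \emph{different} boundary conditions (periodic vs.\ Dirichlet) than the original problem, so the boundary data on the quotient is forced by compatibility with the prescribed Dirichlet condition on $\partial\Omega_L$, and the weight on the quotient problem may degenerate at parts of the boundary, requiring the weighted Sobolev framework introduced above.

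For the base-level identity $\phi_L^{(m)}=\psi\,u^{(m)}$, the product rule yields
\[
  -\Delta(\psi u) + V\psi u
  = u(-\Delta\psi + V\psi) - 2\nabla\psi\cdot\nabla u - \psi\Delta u.
\]
Substituting the cell equation $-\Delta\psi + V\psi=\lambda_\psi\psi$ (which is preserved by the periodic extension $E_x^\#$ because both $\psi$ and $\nabla\psi\cdot\boldsymbol{n}$ match across every $x$-interface by $x$-periodicity) and multiplying by $\psi$ collapses the identity to
\[
  -\nabla\cdot(\psi^2\nabla u) = (\lambda - \lambda_\psi)\,\psi^2 u,
\]
which is precisely the prototype of \cref{def:solutionPrototype} with weight $\psi^2$ and vanishing potential. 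Since the first eigenfunction $\psi$ of the doubly-periodic cell problem is strictly positive everywhere (including on $\partial\Omega_L$, by periodicity in both directions), the Dirichlet condition $\phi_L^{(m)}=0$ on $\partial\Omega_L$ forces $u=0$ there, so $u = u^{(m)}_{\mathcal{B}_d,\mathcal{B}_d,\psi^2,0}(\Omega_L)$ and $\lambda_L^{(m)}=\lambda_\psi + \lambda_u^{(m)}$.

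Applying the same trick one more time splits $u^{(m)}$. For the $y$-direction, writing $u = u_{y,1}u_{y,2}$ and using the weighted cell equation for $u_{y,1}$, a computation parallel to the first step yields
\[
  -\nabla\cdot\bigl(\psi^2 u_{y,1}^2\nabla u_{y,2}\bigr) = (\lambda_u - \lambda_{u_{y,1}})\,\psi^2 u_{y,1}^2\, u_{y,2},
\]
which, with $\varphi_y=\psi u_{y,1}$, is exactly the problem defining $u_{y,2}^{(m)}$. The boundary conditions require more care: Dirichlet in $x$ is inherited because $u_{y,1}$ is $x$-periodic, while on the $y$-boundary $u_{y,1}$ (hence $\varphi_y$) vanishes, and the degeneracy of the weight means the natural boundary condition of the weighted prototype is precisely the Neumann-type $\mathcal{B}_n$ encoded in $H^1_{\mathcal{B}_d,\mathcal{B}_n}(\Omega_L;\varphi_y^2)$. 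The composite identity $\phi_L^{(m)} = \varphi_y\,u_{y,2}^{(m)}$ with $\lambda_L^{(m)}=\lambda_{\varphi_y}+\lambda_{u_{y,2}}^{(m)}$ follows by chaining the two factorizations once one verifies $\lambda_{\varphi_y}=\lambda_\psi+\lambda_{u_{y,1}}$, which is another one-line product-rule calculation showing that $\psi u_{y,1}$ solves the $(\mathcal{B}_\#,\mathcal{B}_d,1,V)$-cell problem. The $x$-splits are entirely symmetric with the roles of $x$ and $y$ exchanged.

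The step I expect to be the main obstacle is proving that the correspondence $(u_{y,2}^{(m)},\lambda_{u_{y,2}}^{(m)})\leftrightarrow(\phi_L^{(m)},\lambda_L^{(m)})$ is \emph{index-preserving}, so that it realizes the full spectrum in the correct order rather than only producing valid eigenpairs. I would establish this via the min-max characterization: the multiplication map $v\mapsto \varphi_y v$ is a bounded linear bijection from $H^1_{\mathcal{B}_d,\mathcal{B}_n}(\Omega_L;\varphi_y^2)$ onto $H^1_0(\Omega_L)$, and a direct calculation gives the Rayleigh quotient identity $\mathcal{R}_{1,V}(\varphi_y v)=\mathcal{R}_{\varphi_y^2,0}(v)+\lambda_{\varphi_y}$. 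Together with the correspondence of $m$-dimensional subspaces under the bijection, this implies that the $m$-th min-max values (and hence the $m$-th eigenfunctions) agree up to the constant shift $\lambda_{\varphi_y}$. The bijection is well-posed because $\varphi_y>0$ in the interior of $\Omega_L$ with $1/\varphi_y^2\in L^1_{\mathrm{loc}}(\Omega_L)$, placing us inside the framework of \cref{rem:regularWeightImpliesClassicalSobolevSpaces}; the analogous argument for the intermediate and $x$-directed factorizations closes the loop.
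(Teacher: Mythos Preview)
Your proposal is correct and matches the paper's approach: the paper packages the Rayleigh quotient identity $\mathcal{R}_{1,V}(\psi u)=\mathcal{R}_{\psi^2,0}(u)+\lambda_\psi$ and the bi-continuity of the factorization operator $u\mapsto u/u_{y,1}$ into two preparatory lemmas, then invokes them in the min-max characterization exactly as you describe to get the index-preserving correspondence. For the bijection you correctly flag as the main obstacle, the paper proves continuity of the multiplication direction $T^{-1}:u_{y,2}\mapsto u_{y,1}u_{y,2}$ via an integration-by-parts identity that exploits the eigen-equation for $u_{y,1}$ (turning the cross term $\int\psi^2\nabla u_{y,1}\cdot\nabla(u_{y,2}^2u_{y,1})$ into $\lambda_{u_{y,1}}\int\psi^2 u_{y,1}^2u_{y,2}^2$), and then recovers continuity of the division direction from the bounded inverse theorem.
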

A graphical representation of \cref{thm:factorization} is presented in \cref{fig:factorization} for \(m=1\), where the scale separation of, e.g., \(\varphi_y^{(1)}\) into a short and \(u_{y,2}^{(1)}\) into a large scale is visible. Moreover, the first excited eigenfunctions with \(m \in \{2,3,4\}\) are visualized in \cref{fig:factorizationExcited}. Herein, the \(m\)-dependence entirely goes into the \(u_{y,2}^{(m)}\) function for the excited eigenfunctions since \(\varphi_y^{(1)}\) is fixed.
\begin{figure}[t]
  \centering
  \newcommand{\datapath}{./plots/factorization/decomp}%
  \input{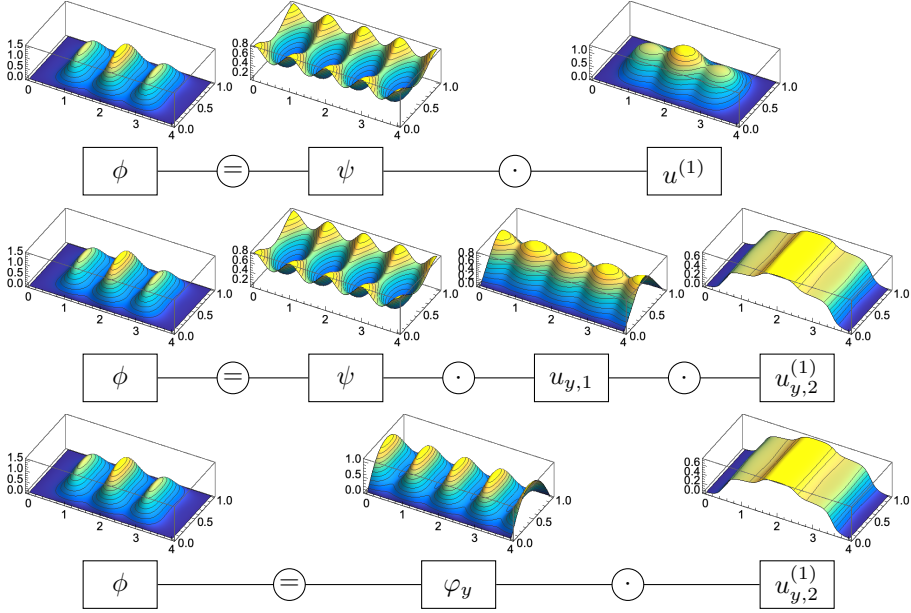}
  \caption{
    Visualization of the factorization for the ground state solution of \(-\Delta \phi + V \phi = \lambda \phi , \phi = 0 \;\mathrm{on}\;\partial\Omega_4\) with \(V(x,y) = 10^2 {(\sin{x})}^2 {(\sin{y})}^2\).
  }\label{fig:factorization}
\end{figure}
\begin{figure}[t]
  \centering
  \newcommand{\datapath}{./plots/factorization/decomp}%
  \input{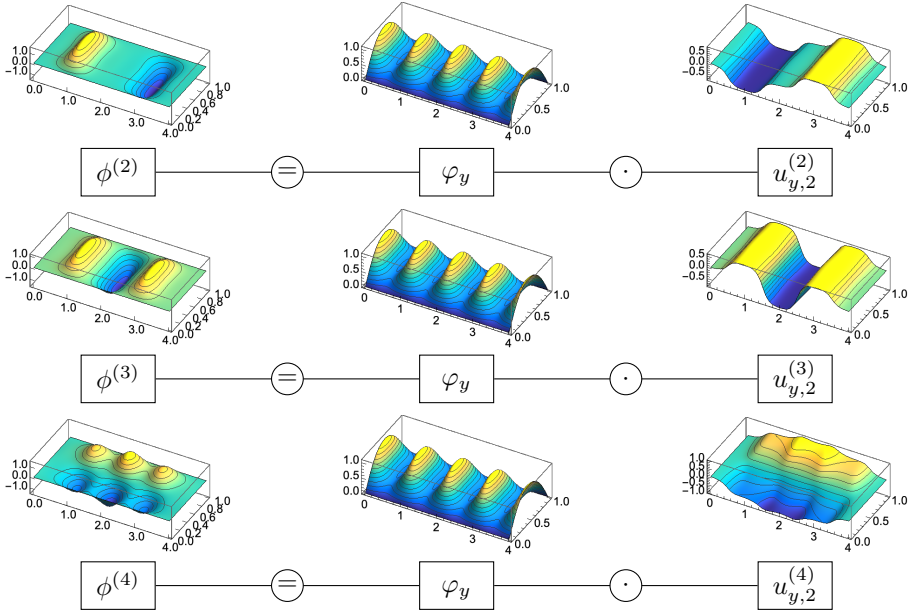}
  \caption{
    Visualization of the factorization for some excited states of \(-\Delta \phi^{(m)} + V \phi^{(m)} = \lambda^{(m)} \phi^{(m)}\) with \(V(x,y) = 10^2 {(\sin{x})}^2 {(\sin{y})}^2\): By construction, the \(m\)-dependence entirely goes into the \(u_{y,2}\) contribution.
  }\label{fig:factorizationExcited}
\end{figure}
\par
To prove \cref{thm:factorization}, we need to show that the factorizations are valid changes of variables. The application of the first \textit{factorization principle} in \cref{eq:facVec1}, i.e.,
\begin{equation}\label{eq:factorizationPrinciplePsi}
  u^{(m)} = u_{\mathcal{B}_d,\mathcal{B}_d,\psi^2,0}^{(m)} = u_{\mathcal{B}_d,\mathcal{B}_d,1,V}^{(m)} / \psi
  ,
\end{equation}
removes the potential \(V\) from the eigenvalue problem while still encoding the corresponding information through \(\psi^2\). The inducing function \(\psi = E^\#_{\boldsymbol{x}} u_{\mathcal{B}_\#,\mathcal{B}_\#,1,V}^{(1)}(\Omega_1)\) is the solution to a spectral cell problem, and it was shown in~\cite{allaireHomogenizationSpectralProblem2000} that this factorization is indeed a diffeomorphism in \(H_0^1(\Omega_L)\).
Such factorization operators will apply a change of variables in the min-max characterization of the eigenvalue.
\par
In contrast to \cref{eq:factorizationPrinciplePsi} where \(\psi^2\) is bounded from below a.e.\ by a positive constant, we will also need factorization operators induced by functions tending to zero at boundary parts due to homogeneous Dirichlet boundary conditions. In such cases, we need to adapt the factorization principle as the boundedness of the division operator is not directly visible, which makes the analysis much more subtle. Thus, we need:
\begin{lemma}[Factorization operator with degeneracy and singularity]\label{lemma:facOperator}
  Let the inducing function \(u_{y,1} := E^\#_{\boldsymbol{x}} u_{\mathcal{B}_{\#},\mathcal{B}_d,\psi^2,0}^{(1)}(\Omega_1) \in H_{\mathcal{B}_{\#},\mathcal{B}_d}^1(\Omega_L)\) with \(0 < \mathsf{c} < \psi^2 < \mathsf{C}\) a.e.\ be given. Then, the linear factorization operator defined by
  \begin{equation}
  \begin{aligned}\label{eq:factorizationOperator}
    T: H_{\mathcal{B}_d,\mathcal{B}_d}^1(\Omega_L) &\to H_{\mathcal{B}_d,\mathcal{B}_n}^1(\Omega_L; u_{y,1}^2)
    \\
    u
    &\mapsto
    T\left( u \right)
    :=
    \boldsymbol{z} \mapsto u(\boldsymbol{z}) / u_{y,1}(\boldsymbol{z}) \text{ a.e.~in } \Omega_L
  \end{aligned}
  \end{equation}
  is bi-continuous and thus a diffeomorphism.
\end{lemma}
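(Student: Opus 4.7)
The plan is to verify four claims about the linear map $T$: (i) it is well-defined with range in $H_{\mathcal{B}_d,\mathcal{B}_n}^1(\Omega_L;u_{y,1}^2)$; (ii) it is continuous; (iii) it is bijective with the explicit pointwise inverse $T^{-1}\colon w\mapsto u_{y,1}\cdot w$; and (iv) $T^{-1}$ is continuous. Linearity of $T$ is immediate, and pointwise bijectivity follows at once from the ground-state positivity $u_{y,1}>0$ a.e.\ in $\Omega_L$ recalled in \cref{ssec:existence}. Because $T$ is linear, bi-continuity together with bijectivity automatically yields the diffeomorphism conclusion, so the substance of the proof is the two-sided norm equivalence
\begin{equation*}
    c_1\,\|u\|_{H^1(\Omega_L)}^2
    \;\le\;
    \|T(u)\|_{H^1(\Omega_L;u_{y,1}^2)}^2
    \;\le\;
    c_2\,\|u\|_{H^1(\Omega_L)}^2,
\end{equation*}
which simultaneously delivers (i), (ii), and (iv).

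The $L^2$-parts of the two norms are identically equal: $\int_{\Omega_L} u_{y,1}^2\,(u/u_{y,1})^2\,\mathrm{d}\boldsymbol{z} = \|u\|_{L^2(\Omega_L)}^2$. For the gradient parts I would exploit the Leibniz identity $\nabla u = u_{y,1}\nabla w + w\,\nabla u_{y,1}$ with $w := T(u)$, which after squaring and using $2u_{y,1}\nabla u_{y,1} = \nabla(u_{y,1}^2)$ gives the pointwise expansion $|\nabla u|^2 = u_{y,1}^2|\nabla w|^2 + \tfrac{1}{2}\nabla(w^2)\cdot\nabla(u_{y,1}^2) + w^2|\nabla u_{y,1}|^2$. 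Integrating over $\Omega_L$ and applying one integration by parts to the cross term --- whose boundary contributions vanish because $w=0$ on the $\boldsymbol{x}$-faces (where $u=0$ and $u_{y,1}$ is bounded away from zero) and $u_{y,1}\nabla u_{y,1}=0$ on the $\boldsymbol{y}$-faces (where $u_{y,1}=0$) --- and using the pointwise reduction $\tfrac{1}{2}\Delta(u_{y,1}^2) = u_{y,1}\Delta u_{y,1} + |\nabla u_{y,1}|^2$, the two copies of $w^2|\nabla u_{y,1}|^2$ cancel and one arrives at the clean identity
\begin{equation*}
    \int_{\Omega_L} u_{y,1}^2 |\nabla w|^2\,\mathrm{d}\boldsymbol{z}
    \;=\;
    \int_{\Omega_L} |\nabla u|^2\,\mathrm{d}\boldsymbol{z}
    \;-\;
    \int_{\Omega_L} w^2\,u_{y,1}\Delta u_{y,1}\,\mathrm{d}\boldsymbol{z}.
\end{equation*}
The remaining integral is then controlled by invoking the cell-problem equation $-\nabla\cdot(\psi^2\nabla u_{y,1}) = \lambda_{u_{y,1}}\psi^2 u_{y,1}$ satisfied by $u_{y,1}$, equivalent to $-u_{y,1}\Delta u_{y,1} = \lambda_{u_{y,1}} u_{y,1}^2 + u_{y,1}\nabla(\log\psi^2)\cdot\nabla u_{y,1}$. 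Combined with the uniform bounds $\mathsf{c}<\psi^2<\mathsf{C}$, this reduces the estimate to $\lambda_{u_{y,1}}\|u\|_{L^2}^2$ plus a lower-order cross term which, after one further integration by parts and Cauchy--Schwarz plus Young with a small parameter $\eta>0$, is absorbed into $\eta\int u_{y,1}^2|\nabla w|^2$. The remaining inequality delivers the two-sided norm equivalence.

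The main obstacle is carrying out these pointwise manipulations and the integration by parts rigorously in spite of the degeneracy $u_{y,1}\to 0$ on the $\boldsymbol{y}$-boundary, where $1/u_{y,1}$ and hence $w$ and $\nabla w$ are only formally defined. I would handle this by a standard regularization: replace $u_{y,1}$ by $u_{y,1}+\varepsilon$ in the quotient so that every step is classical on the uniformly positive weight, and then pass to the limit $\varepsilon\downarrow 0$ using monotone and dominated convergence, justified by the $\varepsilon$-independent bounds obtained above. The continuity of $T^{-1}$ then follows by reading the same identity backwards, noting that the natural Neumann condition on the $\boldsymbol{y}$-faces in the codomain is exactly what is induced by the degenerate weight $u_{y,1}^2$, while the Dirichlet condition on the $\boldsymbol{x}$-faces is preserved by multiplication with the regular (and uniformly positive on those faces) factor $u_{y,1}$.
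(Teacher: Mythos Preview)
Your overall strategy---exploit the Leibniz identity for $u=u_{y,1}w$ together with the eigenvalue equation satisfied by $u_{y,1}$ to obtain a two-sided norm equivalence---is the same mechanism the paper uses. There are, however, two genuine differences in execution. First, the paper keeps the weight $\psi^2$ throughout: it expands $\int_{\Omega_L}\psi^2|\nabla u|^2$ rather than $\int_{\Omega_L}|\nabla u|^2$, so that the cross term becomes exactly $\int_{\Omega_L}\psi^2\nabla u_{y,1}\cdot\nabla(u_{y,2}^2u_{y,1})$, which is the weak form of the cell problem tested against $u_{y,2}^2u_{y,1}$ and hence equals $\lambda_{u_{y,1}}\int_{\Omega_L}\psi^2 u_{y,1}^2u_{y,2}^2$ with no residual. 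Your unweighted expansion produces the extra $\nabla(\log\psi^2)\cdot\nabla u_{y,1}$ contribution, and controlling it (after your further integration by parts) requires $\nabla(\log\psi^2),\,\Delta(\log\psi^2)\in L^\infty$. That regularity is available from elliptic theory in the paper's setting, but it is not part of the stated hypothesis $0<\mathsf c<\psi^2<\mathsf C$, so your argument silently imports an assumption the paper does not need.

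Second, and more significantly, the paper sidesteps the degeneracy of $1/u_{y,1}$ entirely: it proves only the continuity of the \emph{multiplication} operator $T^{-1}$ (where no singularity occurs), verifies that both source and target are Banach spaces, and then invokes the bounded inverse theorem to get continuity of $T$ for free. This removes any need for your $\varepsilon$-regularization and the associated limit passage, which you state but do not justify (the convergence of $\int(u_{y,1}+\varepsilon)^2|\nabla w^\varepsilon|^2$ to $\int u_{y,1}^2|\nabla w|^2$ is not immediate). Finally, a minor point: your ``clean identity'' carries a sign error---the correct version is $\int u_{y,1}^2|\nabla w|^2=\int|\nabla u|^2+\int w^2 u_{y,1}\Delta u_{y,1}$---though this does not affect the absorption argument.
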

\begin{proof}
  Noting that \(T\) is a division operator, the corresponding multiplication operator \(T^{-1}\) is the (left and right) inverse operator. For a simpler notation, we use \(H_0^1(\Omega_L) = H_{\mathcal{B}_d,\mathcal{B}_d}^1(\Omega_L)\). So we study
  \begin{equation}
  \begin{aligned}\label{eq:abstractOperator}
    \tilde{T} &: H^1_0(\Omega_L) \to W
    &
    \tilde{T}^{-1} &: W \to H^1_0(\Omega_L)
    \\
    u &\mapsto u_{y,2} := \tilde{T}(u) = \frac{u}{u_{y,1}}
    &
    u_{y,2} &\mapsto u := \tilde{T}^{-1}(u_{y,2}) = u_{y,1} u_{y,2}
  \end{aligned}
  \end{equation}
  with the abstract set \(W := \text{Im}(\tilde{T}) = \text{Dom}(\tilde{T}^{-1})\) as
  \begin{equation}
    W = \left\{ u_{y,2} \in \mathcal{D}'(\Omega_L) \;\middle|\; \exists u \in H^1_0(\Omega_L) : u_{y,2} = \tilde{T}(u) \right\}.
  \end{equation}
  We show that the image of \(\tilde{T}\) is the \(u_{y,1}^2\)-induced space \(H_{\mathcal{B}_d,\mathcal{B}_n}^1(\Omega_L;u_{y,1}^2)\).
  We have \(W \subset \mathcal{D}'(\Omega_L)\) since \(1/u_{y,1} \in L^1_{\textnormal{loc}}(\Omega_L)\) as \(u_{y,1}=0\) only at the \(\boldsymbol{y}\)-boundary, and therefore \(\forall \phi \in \mathcal{D}(\Omega_L) = C^\infty_{\text{c}}(\Omega_L)\), \(\tilde{T}(\phi) = \int_{\Omega_L} \frac{1}{u_{y,1}} \phi < {\|\phi\|}_\infty \int_{\text{supp}(\phi)} \frac{1}{u_{y,1}} < \infty\) is a linear bounded functional since \(\text{supp}(\phi)\) is compact. Then we have \(W \subset H_{\mathcal{B}_d,\mathcal{B}_n}^1(\Omega_L;u_{y,1}^2)\) since \(u_{y,2} \in W\) implies that there exists an \(u \in H_0^1(\Omega_L)\), such that \( u_{y,2} = \tfrac{u}{u_{y,1}}\), which yields \(\mathcal{B}_d(u_{y,2})=0\) on \(\partial \Omega_{\boldsymbol{x}}\) and trivially fulfilled \(\mathcal{B}_n(u_{y,2})=0\) on \(\partial \Omega_{\boldsymbol{y}}\), and allows us to take
  \begin{equation}
    \tilde{T}^{-1}(u_{y,2}) = u_{y,1} u_{y,2} = \frac{u_{y,1}}{u_{y,1}} u = u \in H^1_0(\Omega_L) \Rightarrow u_{y,2} \in H_{\mathcal{B}_d,\mathcal{B}_n}^1(\Omega_L;u_{y,1}^2)
    .
  \end{equation}
  We also have \(H_{\mathcal{B}_d,\mathcal{B}_n}^1(\Omega_L;u_{y,1}^2) \subset W\) since \(u_{y,2} \in H_{\mathcal{B}_d,\mathcal{B}_n}^1(\Omega_L;u_{y,1}^2)\) implies \(\tilde{T}^{-1}(u_{y,2}) = u_{y,1} u_{y,2} = u \in H^1_0(\Omega_L)\)
  such that
  \begin{equation}
    \exists u \in H^1_0(\Omega_L) : u_{y,2} = \frac{u}{u_{y,1}} = \tilde{T}(u) \Rightarrow u_{y,2} \in W
    .
  \end{equation}
  Thus \(W = H_{\mathcal{B}_d,\mathcal{B}_n}^1(\Omega_L;u_{y,1}^2)\), so \(\tilde{T}\) defined by from \cref{eq:abstractOperator} coincides with \(T\) from \cref{eq:factorizationOperator}. Moreover, the weighted Sobolev space \(H_{\mathcal{B}_d,\mathcal{B}_n}^1(\Omega_L;u_{y,1}^2)\) is a Banach space since \(u_{y,1}\) only degenerates on the boundary (c.f.~\cref{rem:regularWeightImpliesClassicalSobolevSpaces}).
  \par
  Since \(T\) and \(T^{-1}\) are both surjective, \(T\) is bijective.
  We now show the continuity of \(T^{-1}\) as this is the more straightforward direction being a multiplication operator. Since all \(u_{\mathcal{B}_d,\mathcal{B}_d,\psi^2,0}^{(m)}\) form a basis of \(H_0^1(\Omega_L)\) (c.f. \cref{ssec:existence}), it suffices to show continuity for all basis functions \(u_{\mathcal{B}_d,\mathcal{B}_d,\psi^2,0}^{(m)}\) and to conclude by the linearity of the operator \(T^{-1}\). Thus, let \(m\) be fixed, \(u := u_{\mathcal{B}_d,\mathcal{B}_d,\psi^2,0}^{(m)}\), and \(u_{y,2} := T(u_{\mathcal{B}_d,\mathcal{B}_d,\psi^2,0}^{(m)})\). Then it follows for \(u = T^{-1}(u_{y,2}) = u_{y,1} u_{y,2}\) that
  \begin{align}\label{eq:10}
    \int_{\Omega_L} \psi^2 \nabla u \cdot \nabla u \dd \boldsymbol{z}
    =
    \int_{\Omega_L} \psi^2 \left( u_{y,1}^2 \nabla u_{y,2} \cdot \nabla u_{y,2} + \nabla u_{y,1} \cdot \nabla (u_{y,2}^2 u_{y,1}) \right) \dd \boldsymbol{z}
    ,
  \end{align}
  which is well defined by the assumption that \(u\) is the corresponding eigenfunction for the first expression in \cref{eq:10}. We further have for the last term in \cref{eq:10} that
  \begin{equation}
  \begin{aligned}
    \int_{\Omega_L} \psi^2 \nabla u_{y,1} \cdot \nabla (u_{y,2}^2 u_{y,1}) \dd \boldsymbol{z}
    &=
    - \int_{\Omega_L} \nabla \cdot \left( \psi^2 \nabla u_{y,1} \right) u_{y,2}^2 u_{y,1} \dd \boldsymbol{z}
    \\
    &=
    \lambda_{u_{y,1}}^{(1)} \int_{\Omega_L} \left(\psi^2 u_{y,1}\right) \left(u_{y,2}^2 u_{y,1}\right) \dd \boldsymbol{z}
    ,
  \end{aligned}
  \end{equation}
  by the definition of \(u_{y,1} = E^\#_{\boldsymbol{x}} u_{\mathcal{B}_{\#},\mathcal{B}_d,\psi^2,0}^{(1)}(\Omega_1)\) according to \cref{eq:prototypeEigenvalueProb} multiplied with \(u_{y,2}^2 u_{y,1}\). Together with \(0 < \mathsf{c} < \psi^2 < \mathsf{C}\), it then follows from \cref{eq:10} that
  \begin{equation}
  \label{eq:11}
  \begin{aligned}
    \mathsf{c} \int_{\Omega_L} \nabla u \cdot \nabla u \dd \boldsymbol{z}
    &\le
    \mathsf{C} \int_{\Omega_L} u_{y,1}^2 \nabla u_{y,2} \cdot \nabla u_{y,2} \dd \boldsymbol{z}
    +
    \mathsf{C}\lambda_{u_{y,1}}^{(1)} \int_{\Omega_L} u_{y,1}^2 u_{y,2}^2 \dd \boldsymbol{z}
    \\
    &\le
    \mathsf{C}\max{\{1,\lambda_{u_{y,1}}^{(1)}\}} {||u_{y,2}||}^2_{H^1(\Omega_L;u_{y,1}^2)}
    .
  \end{aligned}
  \end{equation}
  Adding \({||u||}^2_{L^2(\Omega_L)} = {||u_{y,2}||}^2_{L^2(\Omega_L;u_{y,1}^2)}\) on both sides of \cref{eq:11} yields
  \begin{align}
    {||\nabla u||}^2_{L^2(\Omega_L)} + {||u||}^2_{L^2(\Omega_L)}
    \le
    \frac{\mathsf{C}\max{\{1,\lambda_{u_{y,1}}^{(1)}\}}}{\mathsf{c}} {||u_{y,2}||}^2_{H^1(\Omega_L;u_{y,1}^2)}
    + {||u_{y,2}||}^2_{L^2(\Omega_L;u_{y,1}^2)}
    ,
  \end{align}
  which finally, with \(u_{y,2} = T(u)\), provides us that \({||u||}^2_{H^1(\Omega_L)} \le \mathsf{D} {||T(u)||}_{H^1(\Omega_L;u_{y,1}^2)}^2\) for some \(\mathsf{D}>0\). This is equivalent to
  \begin{align}\label{eq:TInvIsConti}
    {||T^{-1}(u_{y,2})||}^2_{H^1(\Omega_L)}
    \le
    \mathsf{D} {||u_{y,2}||}_{H^1(\Omega_L;u_{y,1}^2)}^2
    .
  \end{align}
  As \(T^{-1}\) is a linear, bijective, and continuous (by \cref{eq:TInvIsConti}) operator between two Banach spaces, the inverse \({(T^{-1})}^{-1} = T\) is also continuous~\cite[p35]{brezisFunctionalAnalysisSobolev2010} with \({||T(u)||}^2_{H^1(\Omega_L;u_{y,1}^2)} \le \tilde{\mathsf{D}} {||u||}_{H^1(\Omega_L)}^2\). Bi-continuity of \(T\) implies that \(T\) and \(T^{-1}\) are continuously Fr\'echet-differentiable since they are both linear. Thus, \(T\) is a diffeomorphism~\cite{galewskiCONDITIONSHAVINGDIFFEOMORPHISM2014}.
\end{proof}
In order to apply the above-defined factorization operator in the min-max setting, we have the following:
\begin{lemma}[Rayleigh Quotients after Factorization]\label{lem:rayleighFactorization}
  Let \(\phi^{(m)} = u_{\mathcal{B}_d,\mathcal{B}_d,1,V}^{(m)}(\Omega_L)\) and \(u^{(m)},\psi\) be given as in \cref{thm:factorization}. Then, after the factorization of \(\phi^{(m)} = {u^{(m)}} \cdot \psi\), the corresponding Rayleigh quotient reads
  \begin{equation}\label{eq:18}
    \mathcal{R}_{1,V}(\phi^{(m)})
    =
    \mathcal{R}_{\psi^2,0}\left({u^{(m)}}\right)
    +
    \lambda_\psi
    .
  \end{equation}
\end{lemma}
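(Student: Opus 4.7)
The plan is to verify the identity by a direct computation: substitute $\phi^{(m)} = \psi \cdot u^{(m)}$ into the numerator and denominator of $\mathcal{R}_{1,V}(\phi^{(m)})$, perform one integration by parts to regroup the cross terms, and then invoke the eigenvalue equation satisfied by $\psi$ to eliminate the potential $V$.

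First, I would apply the product rule to expand the Dirichlet term:
\begin{equation*}
  |\nabla \phi^{(m)}|^2
  = \psi^2 |\nabla u^{(m)}|^2 + 2\psi\, u^{(m)} \nabla u^{(m)} \cdot \nabla \psi + (u^{(m)})^2 |\nabla \psi|^2.
\end{equation*}
The cross term can be rewritten as $\nabla((u^{(m)})^2) \cdot \psi \nabla \psi$, and I integrate by parts. The boundary contributions vanish because $u^{(m)} \in H^1_0(\Omega_L)$ gives $(u^{(m)})^2 = 0$ on $\partial \Omega_L$. This yields
\begin{equation*}
  \int_{\Omega_L} 2 \psi u^{(m)} \nabla u^{(m)} \cdot \nabla \psi \, \mathrm{d}\boldsymbol{z}
  = - \int_{\Omega_L} (u^{(m)})^2 \bigl( |\nabla \psi|^2 + \psi \Delta \psi \bigr) \mathrm{d}\boldsymbol{z}.
\end{equation*}

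Second, I use that $\psi$ extends a cell eigenfunction and therefore satisfies $-\Delta \psi + V \psi = \lambda_\psi \psi$ pointwise a.e.\ on $\Omega_L$, so that $\psi \Delta \psi = (V - \lambda_\psi)\psi^2$. Substituting this back, the two occurrences of $(u^{(m)})^2 |\nabla \psi|^2$ cancel, and the remaining terms combine with $\int_{\Omega_L} V (\phi^{(m)})^2 \, \mathrm{d}\boldsymbol{z} = \int_{\Omega_L} V \psi^2 (u^{(m)})^2 \, \mathrm{d}\boldsymbol{z}$ to cancel the $V$-contribution. One is left with
\begin{equation*}
  \int_{\Omega_L} |\nabla \phi^{(m)}|^2 + V (\phi^{(m)})^2 \, \mathrm{d}\boldsymbol{z}
  = \int_{\Omega_L} \psi^2 |\nabla u^{(m)}|^2 \, \mathrm{d}\boldsymbol{z}
  + \lambda_\psi \int_{\Omega_L} \psi^2 (u^{(m)})^2 \, \mathrm{d}\boldsymbol{z}.
\end{equation*}
Dividing by $\int_{\Omega_L} (\phi^{(m)})^2 \, \mathrm{d}\boldsymbol{z} = \int_{\Omega_L} \psi^2 (u^{(m)})^2 \, \mathrm{d}\boldsymbol{z}$ immediately produces the stated decomposition $\mathcal{R}_{1,V}(\phi^{(m)}) = \mathcal{R}_{\psi^2,0}(u^{(m)}) + \lambda_\psi$.

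The main obstacle is the rigorous justification of the integration by parts, since it formally involves $\Delta \psi$. Two ingredients resolve this: (i) $\psi$ is strictly positive and smooth enough for classical elliptic regularity to apply, because $V \in L^\infty$ and $\psi$ solves a linear elliptic equation on the torus, giving $\psi \in H^2_{\mathrm{loc}}$; (ii) the vanishing of $u^{(m)}$ on $\partial\Omega_L$ kills all boundary terms that would otherwise appear. A cleaner alternative, which avoids pointwise use of $\Delta \psi$ altogether, is to interpret the identity through the weak form of the eigenvalue problem for $\psi$ tested against $(u^{(m)})^2 \psi$; this test function lies in $H^1_\#(\Omega_L)$ whenever $u^{(m)} \in L^\infty$, and the general case follows by a truncation and density argument, since the diffeomorphism established in \cref{lemma:facOperator} guarantees that all relevant norms are controlled.
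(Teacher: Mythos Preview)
Your proof is correct and follows essentially the same approach as the paper: expand $|\nabla(\psi u^{(m)})|^2$, integrate by parts, and invoke the eigenvalue equation for $\psi$ to produce the constant $\lambda_\psi$. The paper regroups the expansion slightly more compactly as $\psi^2|\nabla u^{(m)}|^2 + \nabla\psi\cdot\nabla\bigl((u^{(m)})^2\psi\bigr)$ before integrating by parts, but the underlying computation is the same; your discussion of the regularity needed to justify the integration by parts (and the weak-form alternative) is in fact more careful than what the paper writes.
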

\begin{proof}
  We first note that the factorization allows for the splitting
  \begin{equation}\label{eq:factorizationSplitting}
  \begin{aligned}
    \nabla \phi^{(m)} \cdot \nabla \phi^{(m)}
    &=
    \nabla \left({u^{(m)}} \psi\right) \cdot \nabla \left({u^{(m)}} \psi\right)
    \\
    &=
    \psi^2 \nabla {u^{(m)}} \cdot \nabla {u^{(m)}} + \nabla \psi \cdot \nabla \left({\left(u^{(m)}\right)}^2 \psi\right)
    .
  \end{aligned}
  \end{equation}
  Using the splitting \cref{eq:factorizationSplitting}, we obtain that \(\mathcal{R}_{1,V}(\phi^{(m)})\) is equal to
  \begin{equation}\label{eq:rayleighCalculationFactorization}
  \begin{aligned}
    & \frac{\int_{\Omega_L} \psi^2 \nabla {u^{(m)}} \cdot \nabla {u^{(m)}} \dd \boldsymbol{z}}{\int_{\Omega_L} \psi^2 {\left(u^{(m)}\right)}^2 \dd \boldsymbol{z}}
    +
    \frac{
      \int_{\Omega_L} \left( \nabla \psi \cdot \nabla \left({\left(u^{(m)}\right)}^2 \psi\right) + V \psi \left({\left(u^{(m)}\right)}^2 \psi\right) \right) \dd \boldsymbol{z}
    }{
      \int_{\Omega_L} \psi \left({\left(u^{(m)}\right)}^2 \psi \right) \dd \boldsymbol{z}
    }
    \\
    = {} &
    \mathcal{R}_{\psi^2,0}\left({u^{(m)}}\right)
    +
    \frac{
      \int_{\Omega_L} \left( - \nabla \cdot \left(\nabla \psi\right) + V \psi  \right) {\left(u^{(m)}\right)}^2 \psi \dd \boldsymbol{z}
    }{
      \int_{\Omega_L} \psi \left({\left(u^{(m)}\right)}^2 \psi \right) \dd \boldsymbol{z}
    }
    \\
    = {} &
    \mathcal{R}_{\psi^2,0}\left({u^{(m)}}\right)
    +
    \frac{
      \int_{\Omega_L} \lambda_\varphi \psi \left({\left(u^{(m)}\right)}^2 \psi \right) \dd \boldsymbol{z}
    }{
      \int_{\Omega_L} \psi \left({\left(u^{(m)}\right)}^2 \psi \right) \dd \boldsymbol{z}
    }
    =
    \mathcal{R}_{\psi^2,0}\left({u^{(m)}}\right)
    +
    \lambda_\psi
    ,
  \end{aligned}
  \end{equation}
  by the definition of the eigenfunction \(\psi = E^\#_{\boldsymbol{x}} u_{\mathcal{B}_\#,\mathcal{B}_d,1,V}^{(1)}(\Omega_1)\).
\end{proof}
We can now combine all the above to prove \cref{thm:factorization}:
\begin{proof}[Proof of \cref{thm:factorization}]\label{proof:factorizationThm}
  For a simpler notation, let \(\phi = u_{\mathcal{B}_d,\mathcal{B}_d,1,V}^{(m)}\change(\Omega_L)\).
  We apply the factorization principle twice with
  \begin{equation}\label{eq:factorizations}
    \phi = u \psi = {(T_{\psi})}^{-1}(u)
    \quad \text{and} \quad
    u = u_{y,1} u_{y,2} = {(T_{u_{y,1}})}^{-1}(u_{y,2})
    .
  \end{equation}
  The operations are defined in \cref{eq:factorizationPrinciplePsi,lemma:facOperator}, and the latter ensures that these are valid changes of variables in the min-max characterization. With \cref{lem:rayleighFactorization} and the first change of variables, we then obtain for \(\lambda_{\mathcal{B}_d,\mathcal{B}_d,1,V}^{(m)}(\Omega_L)\) that
  \begin{equation}
  \begin{aligned}
    \min_{\substack{W_m \subset H_{\mathcal{B}_d,\mathcal{B}_d}^1(\Omega_L) \\ \text{dim} W_m = m}}
    \max_{\substack{\phi \in W_m \\ \phi \ne 0}} \mathcal{R}_{1,V}(\phi)
    &=
    \min_{\substack{W_m \subset H_{\mathcal{B}_d,\mathcal{B}_d}^1(\Omega_L) \\ \text{dim} W_m = m}}
    \max_{\substack{u \in W_m \\ u \ne 0}} \mathcal{R}_{\psi^2,0}(u)
    + \lambda_{\mathcal{B}_\#,\mathcal{B}_\#,1,V}^{(1)}(\Omega_1)
    \\
    &=
    \lambda_{\mathcal{B}_d,\mathcal{B}_d,\psi^2,0}^{(m)}(\Omega_L)
    + \lambda_{\mathcal{B}_\#,\mathcal{B}_\#,1,V}^{(1)}(\Omega_1)
    .
  \end{aligned}
  \end{equation}
  The potential \(V\) is now encoded in the diffusion coefficient \(\psi^2\). We now continue in the same fashion with the second factorization and obtain
  \begin{equation}
  \begin{aligned}
    \lambda_{\mathcal{B}_d,\mathcal{B}_d,\psi^2,0}^{(m)}
    &=
    \min_{\substack{W_m \subset H_{\mathcal{B}_d,\mathcal{B}_n}^1(\Omega_L;u_{y,1}^2) \\ \text{dim} W_m = m}}
    \max_{\substack{u_{y,2} \in W_m \\ u_{y,2} \ne 0}} \mathcal{R}_{\psi^2 u_{y,1}^2,0}(u_{y,2})
    + \lambda_{\mathcal{B}_\#,\mathcal{B}_d,\psi^2,0}^{(1)}(\Omega_1)
    \\
    &=
    \lambda_{\mathcal{B}_d,\mathcal{B}_n,\psi^2 u_{y,1}^2,0}^{(m)}(\Omega_L)
    + \lambda_{\mathcal{B}_\#,\mathcal{B}_d,\psi^2,0}^{(1)}(\Omega_1)
    .
  \end{aligned}
  \end{equation}
  Here, we used \({(T_{u_{y,1}})}^{-1}(u_{y,2})\) being a diffeomorphism by \cref{lemma:facOperator} and, therefore, a well-defined change of variables. The corresponding eigenfunction multiplication in \cref{eq:facVec3} is a direct result of the factorizations of \cref{eq:factorizations}.
  If we then also apply the \(\psi\)-factorization to \(\varphi_y\), we obtain \(\varphi_y = \psi u_{y,1}\) and \(\lambda_{\varphi_y} = \lambda_{\psi} + \lambda_{u_{y,1}}\), which concludes the proof of \cref{eq:fac3}. The other relations, i.e.~\cref{eq:fac1,eq:facVec1}, follow analogously with applying their respective factorizations \({(T(\cdot))}^{-1}\) in \cref{eq:factorizations}.
\end{proof}

\subsection{Homogenization in the Expanding Directions}\label{ssec:homogenization}
In order to entirely characterize the asymptotic behavior of the spectrum as $L\to\infty$, we will now consider the contribution \(\lambda_{\mathcal{B}_d,\mathcal{B}_n,\psi^2 u_{y,1}^2,0}^{(m)}(\Omega_L)\) in \cref{eq:fac3} as the only one that depends on \(m\) after the factorization of \cref{thm:factorization}. Then, we can make a precise statement about the asymptotic behavior of this remainder.
\begin{theorem}[Asymptotic behavior of expanding direction]\label{thm:asymptoticBehaviorOfXDirectionNew}
  Let \(\psi,u_{y,1}\) be given as in \cref{thm:factorization} and define \(\rho := {(\psi u_{y,1})}^2\). The asymptotic behavior of the eigenpair \(u_{\mathcal{B}_d,\mathcal{B}_n,\rho,0}^{(m)}(\Omega_L),\lambda_{\mathcal{B}_d,\mathcal{B}_n,\rho,0}^{(m)}(\Omega_L)\) for \(L \to \infty\) is
  \begin{align}\label{eq:eigenpairConvergenceToHomogenizedLimit}
    \lambda_{\mathcal{B}_d,\mathcal{B}_n,\rho,0}^{(m)}
    &=
    \frac{1}{L^2} \left( \nu^{(m) } + \mathcal{O}\left( \frac{1}{L} \right) \right)
    ,
    \\
    L^{p/2} u_{\mathcal{B}_d,\mathcal{B}_n,\rho,0}^{(m)}(\boldsymbol{x}/L,\boldsymbol{y})
    &\rightharpoonup
    u_0^{(m)}(\boldsymbol{x}) \text{ weakly up to a subseq.~in } H_{\mathcal{B}_d,\mathcal{B}_n}^1(\Omega_1)
    ,
  \end{align}
  where \((u_0^{(m)},\nu^{(m)}) \in (H_0^1({(0,1)}^p) \setminus \{0\}) \times \mathbb{R}\) is the solution to the \(p\)-dimensional homogenized eigenvalue problem
  \begin{align}\label{eq:homogenizedEigenpair}
    \left\{
    \begin{aligned}
      - \nabla \cdot \left( \bar{D} \nabla u_0^{(m)} \right) &= \nu^{(m)} \bar{C} u_0^{(m)} \text{ in } {(0,1)}^p
      \\
      u_0^{(m)} &= 0 \text{ on } \partial {(0,1)}^p
    \end{aligned}
    \right.
    ,
  \end{align}
  with the constant homogenized coefficients, \(\bar{D} \in \mathbb{R}^{p \times p}\), \(\bar{C} \in \mathbb{R}\), given by
  \begin{align}\label{eq:homogenizedCoeffsNewNew}
    \bar{D}_{ij} = \int_{{(0,1)}^p} \int_{{(0,\ell)}^q} \rho \left( \delta_{ij} + \frac{\partial \theta_j}{\partial x_i} \right) \dd \boldsymbol{y} \dd \boldsymbol{x}
    , \quad
    \bar{C} = \int_{{(0,1)}^p} \int_{{(0,\ell)}^q} \rho \dd \boldsymbol{y} \dd \boldsymbol{x}
    ,
  \end{align}
  for \(i,j = 1,\dots,p\). The corrector functions \({\left\{\theta_i (\boldsymbol{x},\boldsymbol{y})\right\}}_{1 \le i \le p}\) are defined as cell problem solutions on the periodic unit cell, as
  \begin{align}\label{eq:thetaCorrectorProblem}
    \left\{
      \begin{aligned}
        - \nabla \cdot \left( \rho(\tilde{\boldsymbol{x}},\boldsymbol{y}) \left( \boldsymbol{e}_i + \nabla \theta_i(\tilde{\boldsymbol{x}},\boldsymbol{y}) \right)\right) &= 0 \text{ in } \Omega_1 = {(0,1)}^p \times {(0,\ell)}^q
        \\
        \tilde{\boldsymbol{x}} &\mapsto \theta_i(\tilde{\boldsymbol{x}},\boldsymbol{y}) \; \tilde{\boldsymbol{x}} \text{-periodic}
      \end{aligned}
      \right.
    .
  \end{align}
  Furthermore, it holds that \(\nu^{(1)} < \nu^{(2)}\).
\end{theorem}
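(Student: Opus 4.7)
The natural strategy is to rescale the expanding directions to a fixed reference domain and then apply periodic homogenization in the resulting fast variable. First, I would set $\varepsilon := 1/L$ and $\tilde{\boldsymbol{x}} := \boldsymbol{x}/L \in (0,1)^p$, define the rescaled eigenfunction $\tilde u_\varepsilon(\tilde{\boldsymbol{x}},\boldsymbol{y}) := L^{p/2}\, u_{\mathcal{B}_d,\mathcal{B}_n,\rho,0}^{(m)}(L\tilde{\boldsymbol{x}},\boldsymbol{y})$ (the prefactor preserves the $L^2(\rho)$ normalization), and write $\rho_\varepsilon(\tilde{\boldsymbol{x}},\boldsymbol{y}) := \rho(\tilde{\boldsymbol{x}}/\varepsilon,\boldsymbol{y})$. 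A direct change of variables transforms \cref{eq:prototypeEigenvalueProb} into the weak form
\begin{align*}
  \int_{\Omega_1} \rho_\varepsilon\, \nabla_{\tilde{\boldsymbol{x}}} \tilde u_\varepsilon \cdot \nabla_{\tilde{\boldsymbol{x}}} v \,\dd\boldsymbol{z}
  + L^2\int_{\Omega_1} \rho_\varepsilon\, \nabla_{\boldsymbol{y}} \tilde u_\varepsilon \cdot \nabla_{\boldsymbol{y}} v \,\dd\boldsymbol{z}
  = \mu_\varepsilon \int_{\Omega_1} \rho_\varepsilon\, \tilde u_\varepsilon v \,\dd\boldsymbol{z},
\end{align*}
with $\mu_\varepsilon := L^2 \lambda_{\mathcal{B}_d,\mathcal{B}_n,\rho,0}^{(m)}$, so that \cref{eq:eigenpairConvergenceToHomogenizedLimit} becomes equivalent to $\mu_\varepsilon = \nu^{(m)} + \mathcal{O}(\varepsilon)$.

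Next I would establish the uniform upper bound $\mu_\varepsilon \le \nu^{(m)} + o(1)$ by inserting $\boldsymbol{y}$-independent trial functions into the min-max principle. Taking an $m$-dimensional subspace spanned by (approximations of) the first $m$ eigenfunctions of the homogenized operator, the $\boldsymbol{y}$-gradient vanishes, the $\tilde{\boldsymbol{x}}$-integrals converge by $\tilde{\boldsymbol{x}}$-periodicity of $\rho$, and the limit Rayleigh quotient is controlled by $\nu^{(m)}$. Combining this bound with the normalization in the weak form yields uniform control of $\|\nabla_{\tilde{\boldsymbol{x}}}\tilde u_\varepsilon\|_{L^2(\rho_\varepsilon)}$ and, crucially, $\|L\nabla_{\boldsymbol{y}}\tilde u_\varepsilon\|_{L^2(\rho_\varepsilon)}$. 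Weak compactness then produces, up to a subsequence, a limit $u_0 \in H^1_{\mathcal{B}_d,\mathcal{B}_n}(\Omega_1)$ whose $\boldsymbol{y}$-gradient vanishes, so $u_0 \in H^1_0((0,1)^p)$.

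To identify the equation satisfied by $u_0$, I would apply Tartar's oscillating-test-function method with
$v_\varepsilon(\tilde{\boldsymbol{x}},\boldsymbol{y}) = \phi(\tilde{\boldsymbol{x}}) + \varepsilon \sum_{i=1}^{p} \partial_i\phi(\tilde{\boldsymbol{x}})\, \theta_i(\tilde{\boldsymbol{x}}/\varepsilon,\boldsymbol{y})$,
for arbitrary $\phi \in C_c^\infty((0,1)^p)$, where $\theta_i$ are the correctors from \cref{eq:thetaCorrectorProblem}. Expanding the weak form, using the cell problem to cancel the $\mathcal{O}(\varepsilon^{-1})$ terms, and passing to the limit via two-scale convergence of $\rho_\varepsilon$ then identifies exactly the coefficients $\bar D_{ij}$ and $\bar C$ of \cref{eq:homogenizedCoeffsNewNew} in the limit equation \cref{eq:homogenizedEigenpair}. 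The matching lower bound $\liminf \mu_\varepsilon \ge \nu^{(m)}$ follows from a Mosco/$\Gamma$-convergence argument on the associated quadratic forms, yielding $\mu_\varepsilon \to \nu^{(m)}$; the sharper $\mathcal{O}(\varepsilon)$ rate is obtained by incorporating a standard Allaire-type boundary-layer correction at $\partial (0,1)^p$. For the simplicity claim $\nu^{(1)} < \nu^{(2)}$, the matrix $\bar D$ is constant and positive-definite (its ellipticity follows from the variational characterization of the correctors as minimizers of a weighted Dirichlet energy) and $\bar C > 0$, so the homogenized operator on $H^1_0((0,1)^p)$ is a uniformly elliptic self-adjoint operator with constant coefficients, and Krein--Rutman (or the strong maximum principle) gives a simple principal eigenvalue.

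The principal obstacle is the degeneracy of the weight $\rho = (\psi u_{y,1})^2$ at the $\boldsymbol{y}$-boundary, where $u_{y,1}$ vanishes by its Dirichlet condition. This makes $-\nabla\cdot(\rho\nabla\cdot)$ non-uniformly elliptic, so the classical periodic-homogenization toolkit does not apply directly: the corrector problem \cref{eq:thetaCorrectorProblem} must be posed in a $\rho$-weighted Sobolev space, and the compactness, div--curl, and extension lemmas underpinning the energy method must be re-established in this weighted framework (an approximation by $\rho + \delta$ and subsequent $\delta\to 0$ passage is a natural fallback). A secondary technical point is upgrading the eigenvalue convergence from qualitative ($o(1)$) to the advertised $\mathcal{O}(1/L)$ rate, which requires the boundary-layer analysis near $\partial(0,1)^p$ rather than the bulk estimate alone.
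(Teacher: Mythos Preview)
Your proposal is correct and follows essentially the same line as the paper's proof: the same rescaling $\boldsymbol{x}\mapsto \varepsilon\boldsymbol{x}$ to the fixed domain $\Omega_1$, the same anisotropic bilinear form with the $\varepsilon^{-2}$ weight on $\nabla_{\boldsymbol{y}}$, the same a~priori bounds yielding $\nabla_{\boldsymbol{y}}u_\varepsilon\to 0$ and hence a $\boldsymbol{y}$-independent limit, and the identical oscillating test function $v_\varepsilon(\tilde{\boldsymbol{x}},\boldsymbol{y})=\phi(\tilde{\boldsymbol{x}})+\varepsilon\sum_i\partial_i\phi\,\theta_i(\tilde{\boldsymbol{x}}/\varepsilon,\boldsymbol{y})$ with the correctors \cref{eq:thetaCorrectorProblem} to identify $\bar D,\bar C$. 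You also correctly flag the weighted-space issue caused by the degeneracy of $\rho$ on the $\boldsymbol{y}$-boundary, which the paper handles by working in $H^1(\Omega_1;\rho_\varepsilon)$ throughout.

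The one genuine methodological difference is in how the eigenvalue convergence is obtained. The paper first homogenizes the associated \emph{source} problem (Steps~2--4) and then invokes Kesavan's theorem to transfer the homogenized operator to the eigenvalue problem, with the $\mathcal{O}(\varepsilon)$ rate quoted from the literature. You instead attack the eigenvalue problem directly: an upper bound on $\mu_\varepsilon$ via $\boldsymbol{y}$-constant trial functions in the min--max principle, a matching lower bound via Mosco/$\Gamma$-convergence of the quadratic forms, and the rate from an Allaire-type boundary-layer corrector. Both routes are standard and lead to the same conclusion; the paper's detour through the source problem is slightly cleaner in that it lets one cite off-the-shelf results for the spectral convergence and the rate, whereas your direct approach is more self-contained but requires you to actually carry out the $\Gamma$-limit and the boundary-layer estimate in the degenerate weighted setting.
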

\begin{proof}\label{proof:asymptoticBehaviorOfXDirectionNew}
  The proof is divided into five steps. We first apply a spatial transformation to identify the directional homogenization problem. In the second step, we show the existence of a weakly converging subsequence for the linear source problem. Then, the oscillating test function method provides the homogenized operators whose dimensions can be further reduced by considering the directional framework in the fourth step. The last step transfers the results to the eigenvalue problem.
  \par
  \textbf{Step 1:} \textit{Identification of a directional homogenization problem by transformation.}
  \par
  To operate on fixed spatial domains, we map the problem from \(\Omega_L\) to the reference domain \(\Omega_1 = {(0,1)}^p \times {(0,\ell)}^q\) by \(\boldsymbol{x} \mapsto \boldsymbol{x}/L =:\varepsilon \boldsymbol{x}\) and observe for the transformed weight function \(\rho_\varepsilon(\boldsymbol{x},\boldsymbol{y}) := \rho(\boldsymbol{x}/\varepsilon,\boldsymbol{y})\) that \(1/\rho_\varepsilon \in L^1_{\textnormal{loc}}(\Omega_1)\) and \(\rho_\varepsilon > 0 \text{ a.e.\ in } \Omega_1\).
  Thus, the correct framework is the weighted space \(H^1(\Omega_1;\rho_\varepsilon)\).
  We now encode the Dirichlet boundary conditions on the \(\boldsymbol{x}\)-boundary (as in~\cite[p6]{papanicolauAsymptoticAnalysisPeriodic1978}) in the sense of traces with \(\Gamma_{\text{D}} := {\{0,1\}}^p \times {(0,\ell)}^q \subset \partial \Omega_1\) in the subspace
  \begin{equation}
    \mathbb{V}_{\rho_\varepsilon} = \{ \phi \in H^1(\Omega_1;\rho_\varepsilon) \mid \phi = 0 \text{ on } \Gamma_{\text{D}} \} \subset H^1(\Omega_1;\rho_\varepsilon)
    .
  \end{equation}
  Here, \(\mathbb{V}_{\rho_\varepsilon}\) is a Banach space since \(\rho_\varepsilon=0\) occurs only on the boundary \(\Omega_1\) (c.f.~\cref{rem:regularWeightImpliesClassicalSobolevSpaces}). The weak form of the eigenvalue problem reads: Find \((u^{(m)}_\varepsilon, \lambda^{(m)}_\varepsilon) \in (\mathbb{V}_{\rho_\varepsilon} \setminus \{0\}) \times \mathbb{R}\), such that
  \begin{align}\label{eq:epsDependentEigenvalueProblem}
    \forall v \in \mathbb{V}_{\rho_\varepsilon} : \quad
    a_\varepsilon(u^{(m)}_\varepsilon,v)
    = \lambda^{(m)}_\varepsilon \int_{\Omega_1} \rho_\varepsilon(\boldsymbol{x},\boldsymbol{y}) u^{(m)}_\varepsilon v \dd \boldsymbol{x} \dd \boldsymbol{y}
    ,
  \end{align}
  with the bilinear form
  \begin{equation}\label{eq:bilinearForm}
    a_\varepsilon(u^{(m)}_\varepsilon,v)
    =
    \int_{\Omega_1}
    \rho_\varepsilon(\boldsymbol{x},\boldsymbol{y})
    \left(
      \frac{\partial u^{(m)}_\varepsilon}{\partial x_i} \frac{\partial v}{\partial x_i}
      + \frac{1}{\varepsilon^2} \frac{\partial u^{(m)}_\varepsilon}{\partial y_i} \frac{\partial v}{\partial y_i}
    \right) \dd \boldsymbol{x} \dd \boldsymbol{y}
    ,
  \end{equation}
  using index notation. In \cref{eq:epsDependentEigenvalueProblem}, we moved the \(\varepsilon^2\)-scaling to \(\lambda^{(m)}_\varepsilon\) as
  \begin{equation}\label{eq:homogenizationGetEpsScalingIntoEval}
    \lambda_{\mathcal{B}_d,\mathcal{B}_n,\rho_\varepsilon,0}^{(m)} = \varepsilon^2 \lambda^{(m)}_\varepsilon
    ,
  \end{equation}
  which follows from the min-max principle. This operation will be justified later when the existence of this \(\varepsilon^2\)-transformed eigenvalue problem is shown for \(\varepsilon \to 0\).
  \par
  \textbf{Step 2:} \textit{Extraction of a weakly converging subsequence for the linear equation.}
  \par
  From~\cite{kesavanHomogenizationEllipticEigenvalue1979,kesavanHomogenizationEllipticEigenvalue1979a}, we know that the homogenization of eigenvalue problems uses the same homogenized operators as for the corresponding source problem. Hence, we consider the bilinear form of the corresponding source problem to derive the homogenized operators. Therefore, given a family \(f_\varepsilon(\boldsymbol{x},\boldsymbol{y}) \in \mathbb{V}_{\rho_\varepsilon}'\) with \(f_\varepsilon(\boldsymbol{x},\boldsymbol{y}) \rightarrow f_0(\boldsymbol{x}) \text{ in } \mathbb{V}_{\rho_\varepsilon}'\), we study the variational formulation
  \begin{equation}\label{eq:epsDepWeakFormSource}
    \forall v \in \mathbb{V}_{\rho_\varepsilon}:
    \quad
     a_\varepsilon(u_\varepsilon,v) = {\langle f_\varepsilon,v \rangle}_{\mathbb{V}_{\rho_\varepsilon}' \times \mathbb{V}_{\rho_\varepsilon}}
    .
  \end{equation}
  The restriction of the family \(f_\varepsilon\) to \(\boldsymbol{y}\)-constant functions in the limit will be justified later when we show that the homogenized limit \(u_0\) will have exactly this form. Thus, since we want to derive the eigenvalue problem from the source problem, \(f_\varepsilon\) has to mimic the properties of the sequence \(u_\varepsilon\). The bilinear form \(a_\varepsilon(u_\varepsilon,v)\) is \(\mathbb{V}_{\rho_\varepsilon}\)-elliptic (for \(\varepsilon \le 1\)), since for all \(u_\varepsilon \in \mathbb{V}_{\rho_\varepsilon}\), we have
  \begin{equation}
  \begin{aligned}
    a_\varepsilon(u_\varepsilon,u_\varepsilon)
    &\overset{\varepsilon \le 1}{\ge}
    {\| \nabla u_\varepsilon \|}^2_{L^2(\Omega_1;\rho_\varepsilon)} \overset{\text{F.-in.}}{\ge} \frac{1}{2} {\| \nabla u_\varepsilon \|}^2_{L^2(\Omega_1;\rho_\varepsilon)} + \frac{\mathsf{C}_{\text{F}}}{2} {\| u_\varepsilon \|}^2_{L^2(\Omega_1;\rho_\varepsilon)}
    \\
    &\ge
    \frac{1}{2}\min\{1,\mathsf{C}_{\text{F}}\} {\|u_\varepsilon\|}^2_{H^1(\Omega_1;\rho_\varepsilon)}
    =:
    \mathsf{C} {\|u_\varepsilon\|}^2_{H^1(\Omega_1;\rho_\varepsilon)}
    ,
  \end{aligned}
  \end{equation}
  after using the weighted Friedrichs inequality~\cite[p199]{leonardiBestConstantWeighted1994} (for homogeneous Dirichlet boundary condition on parts of the boundary).
  Continuity also holds for all \(\varepsilon > 0\) with a continuity constant proportional to \(1/\varepsilon^2\). Thus, the problem is well-posed and admits a unique solution for all \(\epsilon > 0\) in \(\mathbb{V}_{\rho_\varepsilon}\) (Lax--Milgram, c.f.~\cite[p126]{chipotAnisotropicSingularPerturbation2007}). We, however, are interested in precisely the limit \(\varepsilon \to 0\), which, at first, seems to be problematic since the continuity constant would tend to infinity if we do not further specify the \(\partial \boldsymbol{y}\)-behavior in \cref{eq:bilinearForm}.
  However, we take (as in~\cite[p24]{papanicolauAsymptoticAnalysisPeriodic1978}) \(u_\varepsilon\) in the bilinear form and use the coercivity to obtain
  \begin{equation}
    \mathsf{C} {\|u_\varepsilon\|}^2_{H^1(\Omega_1;\rho_\varepsilon)}
    \le
    a_\varepsilon(u_\varepsilon,u_\varepsilon)
    =
    {\langle f_\varepsilon, u_\varepsilon \rangle}_{\mathbb{V}_{\rho_\varepsilon}' \times \mathbb{V}_{\rho_\varepsilon}}
    \le
    {\|f_\varepsilon\|}_{H^{-1}(\Omega_1;\rho_\varepsilon)} {\|u_\varepsilon\|}_{H^1(\Omega_1;\rho_\varepsilon)}
    ,
  \end{equation}
  with the operator norm \({\|f_\varepsilon\|}_{H^{-1}(\Omega_1;\rho_\varepsilon)} = {\|f_\varepsilon\|}_{\mathbb{V}_{\rho_\varepsilon}'} \to {\|f_0\|}_{\mathbb{V}_{\rho_\varepsilon}'} \le \mathsf{D}\) by our assumption on the family \(f_\varepsilon \in \mathbb{V}_{\rho_\varepsilon}'\). Therefore, \(u_\varepsilon\) is uniformly bounded in \(H^1(\Omega_1;\rho_\varepsilon)\) since \({\|u_\varepsilon\|}_{H^1(\Omega_1;\rho_\varepsilon)} \le \frac{1}{\mathsf{C}} {\|f_\varepsilon\|}_{H^{-1}(\Omega_1;\rho_\varepsilon)} < \infty\).
  Now recall that \(\rho_\varepsilon = \rho(\boldsymbol{x}/\varepsilon,\boldsymbol{y})\) is \(\boldsymbol{x}\)-periodic and thus weakly converges to its \(\boldsymbol{x}\)-average \(\rho_0(\boldsymbol{y})\)~\cite[Lem.~1.8.]{allaireBriefIntroductionHomogenization2012}. From the boundedness of \(u_\varepsilon\) in \(H^1(\Omega_1;\rho_\varepsilon)\), we can follow with~\cite[Prop.~2.1.]{zhikovWeightedSobolevSpaces1998} that there exists a \(u_0 \in H^1(\Omega_1;\rho_0)\), such that there exists a converging subsequence of \(u_\varepsilon\), still denoted by \(u_\varepsilon\) by abuse of notation, that weakly convergences in \(H^1(\Omega_1;\rho_0)\). This ensures the existence of the desired homogenized limit \(u_0\) of \(u_\varepsilon\) as \(\varepsilon \to 0\).
  We can also directly infer \(\sqrt{\rho_\varepsilon} \frac{\partial u_\varepsilon}{\partial \boldsymbol{y}} \to \boldsymbol{0} \) in \(L^2(\Omega_1{})\) by taking the limit in
  \begin{equation}
    \begin{aligned}
    \frac{\mathsf{C}}{\varepsilon^2} \int_{\Omega_1} \rho_\varepsilon {\left|\frac{\partial u_\varepsilon}{\partial \boldsymbol{y}}\right|}^2 \dd \boldsymbol{x} \dd \boldsymbol{y}
    &\le
    a_\varepsilon(u_\varepsilon,u_\varepsilon)
    \\
    &\le
    {\|f_\varepsilon\|}_{H^{-1}(\Omega_1;\rho_\varepsilon)} {\|u_\varepsilon\|}_{H^1(\Omega_1;\rho_\varepsilon)}
    \le
    \frac{1}{\mathsf{C}} {\|f_\varepsilon\|}^2_{H^{-1}(\Omega_1;\rho_\varepsilon)}
    < \infty
    ,
  \end{aligned}
  \end{equation}
  since the norm of \(u_\varepsilon\) is bounded by the norm of \(f_\varepsilon\), which implies that
  \begin{equation}\label{eq:homogenizedLimitIsYConstant}
    \lim_{\varepsilon \to 0} {\left\|\sqrt{\rho_\varepsilon} \frac{\partial u_\varepsilon}{\partial \boldsymbol{y}} - \boldsymbol{0} \right\|}_{L^2(\Omega_1{})} = 0
    ,
  \end{equation}
  since \(\frac{1}{C} {\|f_\varepsilon\|}^2_{H^{-1}(\Omega_1;\rho_\varepsilon)}\) is bounded for all \(\varepsilon\), including \(\varepsilon = 0\). Therefore, \(\sqrt{\rho_\varepsilon} \frac{\partial u_\varepsilon}{\partial \boldsymbol{y}} \to \boldsymbol{0}\) in \(L^2(\Omega_1{})\), which will be important later to reduce the dimension of the homogenized equation from \(p+q\) dimensions to just \(p\). Since \(\rho_0\) is nonzero a.e.\ on \(\Omega_1\) (recall that \(\rho_\varepsilon = 0\) only happens on the \(\boldsymbol{y}\)-boundary), we have \(\frac{\partial u_\varepsilon}{\partial \boldsymbol{y}} \to \boldsymbol{0}\) in \(L^2(\Omega_1)\). Thus, a homogenized limit \(u_0\) with \(\partial u_0 / \partial \boldsymbol{y} = \boldsymbol{0}\) exists for the sequence \(u_\varepsilon\).
  \par
  \textbf{Step 3:} \textit{Derivation of the homogenized operators using oscillating test functions.}%
  \par
  Since we know that there exists a homogenized limit \(u_0\), we aim to derive the corresponding homogenized equation for \(u_0\). Therefore, consider
  \begin{equation}
    \xi_\varepsilon(\boldsymbol{x},\boldsymbol{y})
    :=
    \sqrt{\rho_\varepsilon(\boldsymbol{x},\boldsymbol{y})} \nabla u_\varepsilon(\boldsymbol{x},\boldsymbol{y})
    .
  \end{equation}
  Following the usual arguments~\cite[p24]{papanicolauAsymptoticAnalysisPeriodic1978}, from the uniform boundedness of \(u_\varepsilon\), it follows that
  \begin{equation}\label{eq:boundednessOfXi}
    {\|\xi_\varepsilon\|}_{L^2(\Omega_1)}
    =
    {|u_\varepsilon|}_{H^1(\Omega_1;\rho_\varepsilon)}
    \le
    {\|u_\varepsilon\|}_{H^1(\Omega_1;\rho_\varepsilon)}
    \le
    \frac{1}{C} {\|f_{\varepsilon}\|}_{H^{-1}(\Omega_1;\rho_\varepsilon)}
    <
    \infty
    .
  \end{equation}
  Therefore, we can again extract subsequences \( \xi_\varepsilon\), still denoted by \(\xi_\varepsilon\), such that \(\xi_\varepsilon \rightharpoonup \xi_0\) in \(L^2(\Omega_1)\) weakly.
  This convergence implies that the equation of interest
  \begin{equation}
    {\langle \xi_\varepsilon, \nabla v \rangle}_{L^2(\Omega_1)} = {\langle f_\varepsilon, v \rangle}_{\mathbb{V}_{\rho_\varepsilon}} \quad \forall v \in \mathbb{V}_{\rho_\varepsilon}
    ,
  \end{equation}
  has a limit for \(\varepsilon \to 0\) as
  \begin{equation}\label{eq:abstractHomogenizedEq}
    {\langle \xi_0, \nabla v \rangle}_{L^2(\Omega_1)} = {\langle f_0, v \rangle}_{\mathbb{V}_{\rho_0}} \quad \forall v \in \mathbb{V}_{\rho_0}
    .
  \end{equation}
  To explicitly state this limit equation, we need to calculate \(\xi_0\). We employ the oscillatory test function method~\cite[p10]{allaireBriefIntroductionHomogenization2012} to overcome the problem of \(\xi_\varepsilon = \sqrt{\rho(\boldsymbol{x}/\varepsilon,\boldsymbol{y})} \nabla u_\varepsilon\) being a product of two weakly converging functions and thus not simply being the product of both limits for \(\varepsilon \to 0\). We need to adapt the method to account for the directional periodicity and the additional \(\varepsilon^{-2}\)-scaling of the \((\partial u_\varepsilon / \partial y_i)\)-term in the bilinear form \cref{eq:bilinearForm}. Thus, let \(\varphi \in \mathcal{D}({(0,1)}^p)\) be a smooth, only \(\boldsymbol{x}\)-dependent, and compactly supported test function (i.e., \(\varphi \in C_{\text{c}}^\infty({(0,1)}^p)\)). Then, inspired by the first two terms in the asymptotic expansion for \(u_\varepsilon\), we define the test function \(\varphi_\varepsilon\) as
  \begin{equation}\label{eq:specialTestFunction}
    \varphi_\varepsilon (\boldsymbol{x},\boldsymbol{y})
    :=
    \varphi(\boldsymbol{x}) + \varepsilon \sum_{i=1}^p \frac{\partial \varphi(\boldsymbol{x})}{\partial x_i} \theta_i(\boldsymbol{x}/\varepsilon, \boldsymbol{y})
  \end{equation}
  where, with \(\tilde{\boldsymbol{x}}:=\boldsymbol{x}/\varepsilon\), \(\theta_i(\tilde{\boldsymbol{x}},\boldsymbol{y})\) is the solution to the corrector problem \cref{eq:thetaCorrectorProblem}, which admits for all \(i=1,\dots,p\) a unique solution \(\theta_i \in H^1(\Omega_1;\rho_1) \,/\, \mathbb{R}\) (due to periodic boundary conditions).
  Since \(\theta_i(\tilde{\boldsymbol{x}},\boldsymbol{y})\) is \(\tilde{\boldsymbol{x}}\)-periodic, it converges weakly to its average in \(H^1(\Omega_1;\rho_0)\) as \(\varepsilon \to 0\). Thus, the expression \(\varepsilon \theta_i(\tilde{\boldsymbol{x}},\boldsymbol{y})\) in \cref{eq:specialTestFunction} converges to zero since \(\varepsilon \to 0\). This implies that \(\varphi_\epsilon\) has a well-defined limit \(\varphi_\varepsilon \rightharpoonup \varphi_0 = \varphi(\boldsymbol{x})\) for \(\varepsilon \to 0\).
  \par
  In the following, we group the gradients into \((p,q)\)-blocks by using the notation \(\nabla (\cdot) := {(\nabla_{\boldsymbol{x}} (\cdot),\nabla_{\boldsymbol{y}} (\cdot))}^T\). As the derivative of \(\varphi_\varepsilon\) is required in the variational formulation, we derive from \cref{eq:specialTestFunction}, using the chain and product rule, that
  \begin{equation}
    \nabla \varphi_\varepsilon
    =
    \sum_{i=1}^{p}
    \frac{\partial \varphi(\boldsymbol{x})}{\partial x_i}
    \left(
      e_i
      +
      \begin{pmatrix} \nabla_{\tilde{\boldsymbol{x}}}\theta_i(\tilde{\boldsymbol{x}},\boldsymbol{y}) \\ \varepsilon \nabla_{\boldsymbol{y}}\theta_i(\tilde{\boldsymbol{x}},\boldsymbol{y}) \end{pmatrix}
    \right)
    + \varepsilon \sum_{i=1}^{p}
    \left(
      \frac{\partial}{\partial x_i}
      \begin{pmatrix} \nabla_{\boldsymbol{x}} \varphi (\boldsymbol{x}) \\ 0 \end{pmatrix}
      \theta_i(\tilde{\boldsymbol{x}},\boldsymbol{y})
    \right)
    .
  \end{equation}
  We then insert the test function \(\varphi_\varepsilon\) into the bilinear form \cref{eq:bilinearForm} to obtain
  \begin{equation}
  \label{eq:insertionOfSpecialtestfunction}
  \begin{aligned}
    a_\varepsilon(u_\varepsilon,\varphi_\varepsilon)
    &=
    \int_{\Omega_1} \rho_\varepsilon(\boldsymbol{x},\boldsymbol{y}) \nabla u_\varepsilon
    \cdot
    \left(
      \sum_{i=1}^{p}
      \frac{\partial \varphi(\boldsymbol{x})}{\partial x_i}
      \left(
        e_i
        +
        \begin{pmatrix} \nabla_{\tilde{\boldsymbol{x}}}\theta_i(\tilde{\boldsymbol{x}},\boldsymbol{y}) \\ \varepsilon^{-1} \nabla_{\boldsymbol{y}}\theta_i(\tilde{\boldsymbol{x}},\boldsymbol{y}) \end{pmatrix}
      \right)
    \right)
    \dd \boldsymbol{x} \dd \boldsymbol{y}
    \\
    &+
    \varepsilon \int_{\Omega_1} \rho_\varepsilon(\boldsymbol{x},\boldsymbol{y}) \nabla u_\varepsilon
    \cdot
    \left(
      \sum_{i=1}^{p}
      \left(
        \frac{\partial}{\partial x_i}
        \begin{pmatrix} \nabla_{\boldsymbol{x}} \varphi(\boldsymbol{x}) \\ 0 \end{pmatrix}
        \theta_i(\tilde{\boldsymbol{x}},\boldsymbol{y})
      \right)
    \right)
    \dd \boldsymbol{x} \dd \boldsymbol{y}
    .
  \end{aligned}
  \end{equation}
  The last term in \cref{eq:insertionOfSpecialtestfunction} vanishes in the limit since it can be bounded by a constant times \(\varepsilon\) by the Cauchy--Schwarz inequality as the \((\varphi,\theta_i)\)-term is uniformly bounded in \(L^2(\Omega_1;\rho_\varepsilon)\) by uniformly boundedness of the data in \cref{eq:thetaCorrectorProblem} and \(\varphi\)-smoothness. The other term, \(\nabla u_\varepsilon\), is uniformly bounded in \(L^2(\Omega_1;\rho_\varepsilon)\) by \cref{eq:boundednessOfXi}.
  Integration by parts (with Dirichlet in \(\boldsymbol{x}\)- and trivially fulfilled Neumann data in \(\boldsymbol{y}\)-direction) in the other term of \cref{eq:insertionOfSpecialtestfunction} yields
  \begin{equation}\label{eq:moveDivergence}
  \begin{aligned}
    &
    \int_{\Omega_1} \rho_\varepsilon(\boldsymbol{x},\boldsymbol{y}) \nabla u_\varepsilon
    \cdot
    \left(
      \sum_{i=1}^{p} \frac{\partial \varphi(\boldsymbol{x})}{\partial x_i}
      \left(
        e_i
        +
        \begin{pmatrix} \nabla_{\tilde{\boldsymbol{x}}}\theta_i(\tilde{\boldsymbol{x}},\boldsymbol{y}) \\ \varepsilon^{-1} \nabla_{\boldsymbol{y}}\theta_i(\tilde{\boldsymbol{x}},\boldsymbol{y}) \end{pmatrix}
      \right)
    \right)
    \dd \boldsymbol{x} \dd \boldsymbol{y}
    \\
    &=
    - \int_{\Omega_1} u_\varepsilon \nabla \cdot
    \left(
      \rho_\varepsilon(\boldsymbol{x},\boldsymbol{y})
      \sum_{i=1}^{p} \frac{\partial \varphi(\boldsymbol{x})}{\partial x_i}
      \left(
        e_i
        +
        \begin{pmatrix} \nabla_{\tilde{\boldsymbol{x}}}\theta_i(\tilde{\boldsymbol{x}},\boldsymbol{y}) \\ \varepsilon^{-1} \nabla_{\boldsymbol{y}}\theta_i(\tilde{\boldsymbol{x}},\boldsymbol{y}) \end{pmatrix}
      \right)
    \right)
    \dd \boldsymbol{x} \dd \boldsymbol{y}
    .
  \end{aligned}
  \end{equation}
  The divergence term in \cref{eq:moveDivergence} can be further simplified to
  \begin{equation}\label{eq:evaluateDivergenceTerm}
  \begin{aligned}
    &
    \nabla \cdot
    \left(
      \rho_\varepsilon(\boldsymbol{x},\boldsymbol{y})
      \sum_{i=1}^{p} \frac{\partial \varphi(\boldsymbol{x})}{\partial x_i}
      \left(
        e_i
        +
        \begin{pmatrix} \nabla_{\tilde{\boldsymbol{x}}}\theta_i(\tilde{\boldsymbol{x}},\boldsymbol{y}) \\ \varepsilon^{-1} \nabla_{\boldsymbol{y}}\theta_i(\tilde{\boldsymbol{x}},\boldsymbol{y}) \end{pmatrix}
      \right)
    \right)
    \\
    &=
    \sum_{i=1}^{p}
    \frac{\partial}{\partial x_i} \begin{pmatrix} \nabla_{\boldsymbol{x}} \varphi (\boldsymbol{x}) \\ 0 \end{pmatrix}
    \cdot {\rho_\varepsilon(\boldsymbol{x},\boldsymbol{y})}
    \left(
      e_i
      +
      \begin{pmatrix} \nabla_{\tilde{\boldsymbol{x}}}\theta_i(\tilde{\boldsymbol{x}},\boldsymbol{y}) \\ 0 \end{pmatrix}
    \right)
    \\
    &+
    \varepsilon^{-1} \sum_{i=1}^{p} \frac{\partial \varphi(\boldsymbol{x})}{\partial x_i}
    \left[
      \begin{pmatrix}
        \nabla_{\tilde{\boldsymbol{x}}}
        \\
        \nabla_{\boldsymbol{y}}
      \end{pmatrix}
      \cdot
      \left(
        \rho(\tilde{\boldsymbol{x}},\boldsymbol{y})
        \left(
          e_i +
          \begin{pmatrix}
            \nabla_{\tilde{\boldsymbol{x}}}
            \\
            \nabla_{\boldsymbol{y}}
          \end{pmatrix}
          \theta_i(\tilde{\boldsymbol{x}},\boldsymbol{y})
        \right)
      \right)
    \right]
    ,
  \end{aligned}
  \end{equation}
  From \cref{eq:evaluateDivergenceTerm}, we extract
  \begin{equation}\label{eq:divergenceProductAndChainRule}
    \varepsilon^{-1}
    \begin{pmatrix}
      \nabla_{\tilde{\boldsymbol{x}}}
      \\
      \nabla_{\boldsymbol{y}}
    \end{pmatrix}
    \cdot
    \left(
      \rho(\tilde{\boldsymbol{x}},\boldsymbol{y})
      \left(
        e_i +
        \begin{pmatrix}
          \nabla_{\tilde{\boldsymbol{x}}}
          \\
          \nabla_{\boldsymbol{y}}
        \end{pmatrix}
        \theta_i(\tilde{\boldsymbol{x}},\boldsymbol{y})
      \right)
    \right)
    ,
  \end{equation}
  which is zero (even for \(\varepsilon \to 0\)) due to the particular definition of the correctors \(\theta_i\) in \cref{eq:thetaCorrectorProblem}. Here, we notice that the \(\varepsilon^{-2}\)-scaling in the \(\boldsymbol{y}\)-term of the initial expression precisely aligns with the extruding additional \(\varepsilon^{-1}\) that appeared in \cref{eq:divergenceProductAndChainRule} by the chain rule. Thus, the only remaining term of \cref{eq:evaluateDivergenceTerm} is
  \begin{equation}\label{eq:remainingTermFromDivergence}
    \sum_{i=1}^{p}
    \frac{\partial}{\partial x_i} \begin{pmatrix} \nabla_{\boldsymbol{x}} \varphi (\boldsymbol{x}) \\ 0 \end{pmatrix}
    \cdot {\rho_\varepsilon(\boldsymbol{x},\boldsymbol{y})}
    \left(
      e_i
      +
      \begin{pmatrix} \nabla_{\tilde{\boldsymbol{x}}}\theta_i(\tilde{\boldsymbol{x}},\boldsymbol{y}) \\ 0 \end{pmatrix}
    \right)
    ,
  \end{equation}
  which is bounded in \(L^2(\Omega_1)\) and, thus, weakly converges as \(\varepsilon \to 0\) to its average in the \(\tilde{\boldsymbol{x}}\)-direction~\cite[Lem.~1.8.]{allaireBriefIntroductionHomogenization2012}.
  \par
  In \cref{eq:moveDivergence}, now recall that \(u_\varepsilon\) converges strongly to \(u_0\) in \(L^2(\Omega_1;\rho_0)\) (by the Rellich theorem, c.f.~\cite[Thm.~4.3.21]{allaireNumericalAnalysisOptimization2007}). Thus, we can take the limit of the right-hand side in \cref{eq:moveDivergence}. So in total, we can take the limit of \cref{eq:insertionOfSpecialtestfunction}, which is the product of the limit of \(u_\varepsilon \to u_0\) (strongly) with the weak limit \cref{eq:remainingTermFromDivergence} of the divergence term. Thus the weak form \cref{eq:epsDepWeakFormSource} reduces for \(\varepsilon \to 0\) to
  \begin{equation}\label{eq:takingLimitsInWeakForm}
  \begin{aligned}
    -& \int_{\Omega_1}
    u_0(\boldsymbol{x}) \nabla \cdot
    \left(
      \int_{{(0,1)}^p}
      \rho(\tilde{\boldsymbol{x}},\boldsymbol{y})
      \sum_{i=1}^{p} \frac{\partial \varphi(\boldsymbol{x})}{\partial x_i}
      \left(
        e_i
        +
        \begin{pmatrix} \nabla_{\tilde{\boldsymbol{x}}}\theta_i(\tilde{\boldsymbol{x}},\boldsymbol{y}) \\ 0 \end{pmatrix}
      \right)
      \dd \tilde{\boldsymbol{x}}
    \right)
    \dd \boldsymbol{x} \dd \boldsymbol{y}
    \\
    =&
    \lim_{\varepsilon \to 0} \int_{\Omega_1} \rho_\varepsilon(\boldsymbol{x},\boldsymbol{y}) \nabla u_\varepsilon(\boldsymbol{x},\boldsymbol{y}) \cdot \nabla \varphi_\varepsilon(\boldsymbol{x},\boldsymbol{y}) \dd \boldsymbol{x} \dd \boldsymbol{y}
    \\
    =&
    \lim_{\varepsilon \to 0} \int_{\Omega_1} \rho_\varepsilon(\boldsymbol{x},\boldsymbol{y}) f_\varepsilon(\boldsymbol{x},\boldsymbol{y}) \varphi_\varepsilon(\boldsymbol{x},\boldsymbol{y}) \dd \boldsymbol{x} \dd \boldsymbol{y}
    \\
    =&
    \int_{\Omega_1} \rho_0(\boldsymbol{y}) f_0(\boldsymbol{x}) \varphi(\boldsymbol{x}) \dd \boldsymbol{x} \dd \boldsymbol{y}
    .
  \end{aligned}
  \end{equation}
  We can rewrite the left-hand side of \cref{eq:takingLimitsInWeakForm} using a compact notation as
  \begin{equation}
  \begin{aligned}
    & {\left[
      \int_{{(0,1)}^p} \rho(\tilde{\boldsymbol{x}},\boldsymbol{y}) \sum_{i=1}^p \frac{\partial \varphi(\boldsymbol{x})}{\partial x_i}
      \left(
        e_i
        +
        \begin{pmatrix} \nabla_{\tilde{\boldsymbol{x}}}\theta_i(\tilde{\boldsymbol{x}},\boldsymbol{y}) \\ 0 \end{pmatrix}
      \right)
      \dd \tilde{\boldsymbol{x}}
    \right]}_j
    \\
    =&
    \sum_{i=1}^p \frac{\partial \varphi(\boldsymbol{x})}{\partial x_i} \int_{{(0,1)}^p} \rho(\tilde{\boldsymbol{x}},\boldsymbol{y}) \left( \delta_{ij} + \frac{\partial \theta_i}{\partial {\tilde{x}}_j} \right) \dd \tilde{\boldsymbol{x}}
    =:
    \sum_{i=1}^p \frac{\partial \varphi(\boldsymbol{x})}{\partial x_i} \tilde{D}_{ji}(\boldsymbol{y})
    ,
  \end{aligned}
  \end{equation}
  where we identify the last expression as \({[\tilde{D}^T(\boldsymbol{y}) \nabla \varphi(\boldsymbol{x})]}_j\). As the last step, we reverse the integration by parts and obtain the variational formulation of the homogenized equation for \(u_0\), which is still posed on the \((p+q)\)-dimensional domain \(\Omega_1\), but with \(u_0\) only \(\boldsymbol{x}\)-dependent according to \cref{eq:homogenizedLimitIsYConstant}. The problem then reads: Find \(u_0(\boldsymbol{x}) \in \mathbb{V}_{\rho_0}\), such that
  \begin{equation}\label{eq:homogenizedProblemSourcePrelim}
    \int_{\Omega_1} \tilde{D}(\boldsymbol{y}) \nabla u_0(\boldsymbol{x}) \cdot \nabla \varphi(\boldsymbol{x}) \dd \boldsymbol{x} \dd \boldsymbol{y}
    =
    \int_{\Omega_1} \tilde{C}(\boldsymbol{y}) f_0(\boldsymbol{x}) \varphi(\boldsymbol{x}) \dd \boldsymbol{x} \dd \boldsymbol{y}
    \quad \forall \varphi \in C_{\text{c}}^\infty({(0,1)}^p)
    ,
  \end{equation}
  with the \(\boldsymbol{y}\)-dependent operators
  \begin{align}\label{eq:preliminaryHomogenizedOps}
    \tilde{D}_{ij}(\boldsymbol{y}) = \int_{{(0,1)}^p}
    \rho(\tilde{\boldsymbol{x}},\boldsymbol{y}) \left( \delta_{ij} + \frac{\partial \theta_j}{\partial {\tilde{x}}_i} \right) \dd \tilde{\boldsymbol{x}}
    , \quad
    \tilde{C}(\boldsymbol{y}) = \int_{{(0,1)}^p}
    \rho(\tilde{\boldsymbol{x}},\boldsymbol{y}) \dd \tilde{\boldsymbol{x}}
    ,
  \end{align}
  for \(i,j = 1,\dots,p\).
  In \cref{eq:preliminaryHomogenizedOps}, we remark that these operators look very similar to the usual homogenized operators, e.g., in~\cite{allaireHomogenizationSpectralProblem2000}, with the difference that the integration only takes place in the \(p\) expanding directions over \({(0,1)}^p\).
  \par
  \textbf{Step 4:} \textit{Dimension reduction of the homogenized linear equation.}
  \par
  In our setup, we can, however, further reduce the homogenized limit equation \cref{eq:homogenizedProblemSourcePrelim} since, by definition, \(\nabla \varphi (\boldsymbol{x}) = {(\nabla_{\boldsymbol{x}} \varphi(\boldsymbol{x}),0)}^T\). This allows us to concretize further that \(u_0(\boldsymbol{x}) \in H_0^1({(0,1)}^p)\) since \(u_0(\boldsymbol{x}) \in \mathbb{V}_{\rho_0(\boldsymbol{y})}\) implies \(u_0(\boldsymbol{x})=0\) on \(\partial {(0,1)}^p\) and \({\|u_0(\boldsymbol{x})\|}_{H^1({(0,1)}^p)} < \infty\) since for any \(u(\boldsymbol{x}) \in H^1(\Omega_1;\rho_0)\) with \(\rho_0(\boldsymbol{y}) > 0 \text{ a.e.~in } {(0,\ell)}^q\), it holds that
  \begin{equation}
  \begin{aligned}
    {\|u(\boldsymbol{x})\|}^2_{H^1(\Omega_{1};\rho_0)}
    =
    \left( \int_{{(0,1)}^q} \rho_0(\boldsymbol{y}) \dd \boldsymbol{y} \right)
    {\|u(\boldsymbol{x})\|}^2_{H^1({(0,1)}^p)}
    =
    \overline{\rho_0}^y {\|u(\boldsymbol{x})\|}^2_{H^1({(0,1)}^p)}
    <
    \infty
    ,
  \end{aligned}
  \end{equation}
  since \(\overline{\rho_0}^y \in \mathbb{R}\) is a strictly positive constant. Thus, the homogenized equation reduces from the \((p+q)\)- to the \(p\)-dimensional variational problem: Find \(u_0 \in H_0^1({(0,1)}^p)\), such that
  \begin{equation}\label{eq:homogenizedProblemSource}
    \int_{{(0,1)}^p} \bar{D} \nabla u_0(\boldsymbol{x}) \cdot \nabla \varphi(\boldsymbol{x}) \dd \boldsymbol{x}
    =
    \int_{{(0,1)}^p} \bar{C} f_0(\boldsymbol{x}) \varphi(\boldsymbol{x}) \dd \boldsymbol{x}
    \quad \forall \varphi \in C_{\text{c}}^\infty({(0,1)}^p)
    ,
  \end{equation}
  with the constant homogenized coefficients, defined by \cref{eq:homogenizedCoeffsNewNew} as the integral of \(\tilde{C}(\boldsymbol{y})\) and \(\tilde{D}(\boldsymbol{y})\) from \cref{eq:preliminaryHomogenizedOps} over \({(0,1)}^q\).
  \par
  The homogenized equation \cref{eq:homogenizedProblemSource} is formulated on \(H_0^1({(0,1)}^p)\). Recall that the test function \(\varphi \in C_{\text{c}}^\infty({(0,1)}^p)\) was chosen arbitrarily. Since \(C_{\text{c}}^\infty({(0,1)}^p)\) is dense in \(H_{0}^1({(0,1)}^p)\) by the definition of \(H_0^1\) as the closure of \(C_{\text{c}}^\infty\) under the \(H^1\)-norm~\cite[Def.~4.3.8.]{allaireNumericalAnalysisOptimization2007}, \cref{eq:homogenizedProblemSourcePrelim} holds \(\forall \varphi \in H_0^1({(0,1)}^p)\).
  As the homogenized operator satisfies coercivity (c.f.~\cite[Rem.~2.6.]{papanicolauAsymptoticAnalysisPeriodic1978}), the theorem of Lax--Milgram ensures the uniqueness of the homogenized limit \(u_0\). This, on the other hand, implies that any subsequence of \(u_\varepsilon\) converges to \(u_0\) in the limit. Thus, the entire sequence \(u_\varepsilon\) converges to the same limit \(u_0\) following the standard arguments from, e.g.,~\cite{allaireBriefIntroductionHomogenization2012}.
  \par
  \textbf{Step 5:} \textit{Derivation of the homogenized eigenvalue equation.}
  \par
  Since we now have derived the homogenized equation for the source problem, we can directly deduce from~\cite[Thm.~2.1.]{kesavanHomogenizationEllipticEigenvalue1979} that the eigenvalues and -functions converge to the homogenized eigenvalue equation, posed with the same operator as in \cref{eq:homogenizedProblemSource}, resulting in
  \begin{equation}
    (\lambda_\varepsilon^{(m)},u_\varepsilon^{(m)})
    \to
    (\nu^{(m)}, u_0^{(m)})
    \text{ in } \mathbb{R} \times \left( H^1_0({(0,1)}^p) \text{ weakly up to subseq.} \right)
    ,
  \end{equation}
  where the homogenized eigenpair is defined through \cref{eq:homogenizedEigenpair}. We furthermore have \(\lambda_\varepsilon^{(m)} = \nu^{(m)} + \mathcal{O}(\varepsilon)\)~\cite[p201]{kesavanHomogenizationEllipticEigenvalue1979a}~\cite[p1638]{santosaFirstOrderCorrectionsHomogenized1993a}~\cite[p942]{allaireAnalyseAsymptotiqueSpectrale1997}.
  The convergence of the eigenfunctions holds up to a subsequence because of the eigenvalue multiplicity of the homogenized limit.
  To account for the normalization constraint after the initial transformation of \(\boldsymbol{x} \mapsto \varepsilon \boldsymbol{x}\), we note that \(\|u_0(\boldsymbol{\cdot},\boldsymbol{\cdot})\|_{L^2(\Omega_L)} = L^{p/2} \|u_0(\boldsymbol{\cdot}/L,\boldsymbol{\cdot})\|_{L^2(\Omega_1)}\) by the transformation rule and recall the \((1/L^2)\)-scaling from \cref{eq:homogenizationGetEpsScalingIntoEval} for the eigenvalues, which implies \cref{eq:eigenpairConvergenceToHomogenizedLimit}.
  \par
  The limit eigenvalue \(\nu^{(m)}\) is simple and \(\nu^{(1)} < \nu^{(2)} < \nu^{(3)} < \dots \to \infty\) by the Sturm--Liouville theory for the particular case of \(p=1\) with \(\bar{D}_{11},\bar{C} > 0\). Furthermore, we have \(\nu^{(1)} < \nu^{(2)} \le \nu^{(3)} \le \dots \to \infty\) for the general case of \(p \ge 2\) since the eigenvalue problem is elliptic. However, multiplicities could exceed one for higher eigenvalues.
\end{proof}
We are now ready to prove the quasi-optimality of the spectral shift \(\sigma = \lambda_{\varphi_y}\):
\begin{theorem}\label{thm:fundamentalRatioBoundedNewNew}
  For the quasi-optimal shift \(\sigma = \lambda_{\varphi_y} = \lambda_{\mathcal{B}_\#,\mathcal{B}_d,1,V}^{(1)}(\Omega_1)\), the asymptotic shifted fundamental eigenvalue ratio of the linear periodic Schrödinger eigenvalue problem \cref{eq:schroedingerEquation} converges to a positive constant \(\mathsf{C} < 1\) as \(L \to \infty\), that is
  \begin{equation}\label{eq:fundamentalRatioConvergence}
    0
    \le
    \frac{
      \lambda_{\mathcal{B}_d,\mathcal{B}_d,1,V}^{(1)}(\Omega_L)
      -
      \lambda_{\mathcal{B}_\#,\mathcal{B}_d,1,V}^{(1)}(\Omega_1)
    }{
      \lambda_{\mathcal{B}_d,\mathcal{B}_d,1,V}^{(2)}(\Omega_L)
      -
      \lambda_{\mathcal{B}_\#,\mathcal{B}_d,1,V}^{(1)}(\Omega_1)
    }
    =
    \frac{
      \lambda_{\mathcal{B}_d,\mathcal{B}_n,\varphi_y^2,0}^{(1)}(\Omega_L)
    }{
      \lambda_{\mathcal{B}_d,\mathcal{B}_n,\varphi_y^2,0}^{(2)}(\Omega_L)
    }
    \to
    \mathsf{C}
    < 1
    .
  \end{equation}
\end{theorem}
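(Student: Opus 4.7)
The plan is to prove this by direct combination of the two main preceding theorems. The first equality in the displayed line of the statement is essentially an algebraic identity: by \cref{thm:factorization}, specifically the relation $\lambda_{\mathcal{B}_d,\mathcal{B}_d,1,V}^{(m)}(\Omega_L) = \lambda_{\varphi_y} + \lambda_{\mathcal{B}_d,\mathcal{B}_n,\varphi_y^2,0}^{(m)}(\Omega_L)$ in \cref{eq:fac3}, the shift $\sigma = \lambda_{\varphi_y} = \lambda_{\mathcal{B}_\#,\mathcal{B}_d,1,V}^{(1)}(\Omega_1)$ cancels exactly in both numerator and denominator. This reduces the problem entirely to analyzing the asymptotic behavior of the ratio of the $u_{y,2}$-eigenvalues.

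Next, I would apply \cref{thm:asymptoticBehaviorOfXDirectionNew} with $\rho = \varphi_y^2 = (\psi u_{y,1})^2$, which gives the two-term expansion
\begin{equation}
\lambda_{\mathcal{B}_d,\mathcal{B}_n,\varphi_y^2,0}^{(m)}(\Omega_L) = \frac{1}{L^2}\left(\nu^{(m)} + \mathcal{O}(1/L)\right)
\end{equation}
for $m=1,2$, where $\nu^{(m)}$ are the eigenvalues of the $p$-dimensional homogenized problem \cref{eq:homogenizedEigenpair}. The prefactor $1/L^2$ cancels cleanly in the ratio, yielding
\begin{equation}
\frac{\lambda_{\mathcal{B}_d,\mathcal{B}_n,\varphi_y^2,0}^{(1)}(\Omega_L)}{\lambda_{\mathcal{B}_d,\mathcal{B}_n,\varphi_y^2,0}^{(2)}(\Omega_L)} = \frac{\nu^{(1)} + \mathcal{O}(1/L)}{\nu^{(2)} + \mathcal{O}(1/L)} \;\xrightarrow[L\to\infty]{}\; \frac{\nu^{(1)}}{\nu^{(2)}} =: \mathsf{C}.
\end{equation}
Here the division is legitimate for $L$ large enough because $\nu^{(2)} > 0$ (strict positivity of all eigenvalues of the coercive homogenized operator with partially Dirichlet boundary conditions) and the error term is eventually dominated.

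It remains to verify $0 \le \mathsf{C} < 1$. Non-negativity is immediate since both eigenvalues in the unshifted ratio are positive: the generalized eigenvalue problems defining them are self-adjoint and positive-definite on the weighted space by the Friedrichs/Poincaré argument used in Step~2 of the proof of \cref{thm:asymptoticBehaviorOfXDirectionNew}. Strict inequality $\mathsf{C} < 1$ is exactly the assertion $\nu^{(1)} < \nu^{(2)}$ established at the end of \cref{thm:asymptoticBehaviorOfXDirectionNew} (simplicity via Sturm--Liouville in the case $p=1$, and via ellipticity plus the strong maximum principle for the scalar-coefficient homogenized operator in the general case $p\ge 2$).

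The main obstacle, if any, is a subtle one: the $\mathcal{O}(1/L)$ remainder in \cref{thm:asymptoticBehaviorOfXDirectionNew} is not stated to be uniform in $m$, so one must be explicit that only the fixed indices $m=1,2$ are used, for which the implied constants depend only on $\nu^{(1)},\nu^{(2)}$ and are therefore harmless in the limit. Once this is pointed out, the argument is essentially a two-line corollary of the factorization plus homogenization results already proved.
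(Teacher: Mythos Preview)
Your proposal is correct and follows essentially the same approach as the paper: the equality in \cref{eq:fundamentalRatioConvergence} comes directly from the factorization \cref{eq:fac3}, and the convergence to $\mathsf{C}=\nu^{(1)}/\nu^{(2)}<1$ follows from the expansion $L^2\lambda_{\mathcal{B}_d,\mathcal{B}_n,\varphi_y^2,0}^{(m)}(\Omega_L)=\nu^{(m)}+o(1)$ of \cref{thm:asymptoticBehaviorOfXDirectionNew} together with $\nu^{(1)}<\nu^{(2)}$. Your additional remarks on positivity and on the fixed indices $m=1,2$ are accurate and make explicit what the paper leaves implicit.
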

\begin{proof}
  The proof follows from \cref{thm:asymptoticBehaviorOfXDirectionNew} since \(L^2 \lambda_{\mathcal{B}_d,\mathcal{B}_n,\varphi_y^2,0}^{(m)}(\Omega_L) = \nu^{(m)} + o\left(\frac{1}{L}\right)\) and \(\nu^{(1)} < \nu^{(2)}\).
\end{proof}
We will see later that pre-asymptotic effects lead to a non-monotonic convergence of \cref{eq:fundamentalRatioConvergence}. However, since the convergence holds in the limit, we can make a statement for uniform boundedness if \(L\) is sufficiently large.
\begin{corollary}\label{cor:shiftedFundamentalRationUniformlyBounded}
  There exists a constant \(\mathsf{D} \in [0,1)\) and a length \(L^* \in \mathbb{R}^+\), such that the quasi-optimally shifted ratio from \cref{thm:fundamentalRatioBoundedNewNew} is uniformly bounded from above by \(\mathsf{D}\) for all \(L > L^*\). That is 
  \begin{equation}
    0
    \le
    \frac{
      \lambda_{\mathcal{B}_d,\mathcal{B}_d,1,V}^{(1)}(\Omega_L)
      -
      \lambda_{\mathcal{B}_\#,\mathcal{B}_d,1,V}^{(1)}(\Omega_1)
    }{
      \lambda_{\mathcal{B}_d,\mathcal{B}_d,1,V}^{(2)}(\Omega_L)
      -
      \lambda_{\mathcal{B}_\#,\mathcal{B}_d,1,V}^{(1)}(\Omega_1)
    }
    < \mathsf{D}
    < 1
    \quad \forall L > L^*
    .
  \end{equation}
\end{corollary}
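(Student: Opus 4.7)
The plan is to apply \cref{thm:fundamentalRatioBoundedNewNew} directly, turning the limit statement into a uniform bound via a standard $\varepsilon$--$N$ argument. First, I would recall from \cref{thm:fundamentalRatioBoundedNewNew} that the shifted fundamental ratio in question coincides with $\lambda_{\mathcal{B}_d,\mathcal{B}_n,\varphi_y^2,0}^{(1)}(\Omega_L) / \lambda_{\mathcal{B}_d,\mathcal{B}_n,\varphi_y^2,0}^{(2)}(\Omega_L)$ and converges to a constant $\mathsf{C} < 1$ as $L \to \infty$. This equality already establishes non-negativity, since both numerator and denominator are eigenvalues of the prototype problem in \cref{def:solutionPrototype} with non-negative coefficients, hence strictly positive by the existence theory recalled in \cref{ssec:existence}.

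Next, I would pick an explicit margin: set $\mathsf{D} := (1 + \mathsf{C})/2$, so that $\mathsf{C} < \mathsf{D} < 1$. Applying the definition of convergence with $\varepsilon := \mathsf{D} - \mathsf{C} > 0$, there is some $L^* > 0$ such that for all $L > L^*$ the distance between the ratio and $\mathsf{C}$ is less than $\varepsilon$. In particular, the ratio is bounded above by $\mathsf{C} + \varepsilon = \mathsf{D}$, which is exactly the claimed uniform bound.

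There is no substantive obstacle here: the corollary is a direct consequence of \cref{thm:fundamentalRatioBoundedNewNew} plus the elementary observation that a convergent sequence is eventually bounded by any strict upper bound of its limit. The only point worth emphasising in the write-up is that $\mathsf{D}$ can be chosen strictly less than one (which is why one picks the midpoint $(1+\mathsf{C})/2$ rather than, say, $\mathsf{C}$ itself), so that $\mathsf{D}$ indeed serves as a usable convergence rate for the shifted-and-inverted iterative eigensolver. The purpose of this corollary is essentially packaging: it reformulates the asymptotic statement in the form that is directly needed in the numerical analysis of \cref{sec:discretizationAndIterativeEigensolvers}, where uniform-in-$L$ iteration counts must be read off from a single constant bound rather than from a limit.
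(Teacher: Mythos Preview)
Your argument is correct and matches the paper's own proof, which simply states that the result follows directly from the convergence established in \cref{thm:fundamentalRatioBoundedNewNew}. You have merely made explicit the standard $\varepsilon$--$L^*$ step (with the concrete choice $\mathsf{D}=(1+\mathsf{C})/2$) that the paper leaves implicit.
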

\begin{proof}
  The proof follows directly from the convergence result of \cref{thm:fundamentalRatioBoundedNewNew}.
\end{proof}
\begin{remark}\label{rem:absoluteOrderingPreserved}
  The quasi-optimal shift \(\sigma = \lambda_{\mathcal{B}_d,\mathcal{B}_d,1,V}^{(1)}(\Omega_L)\) does not affect the absolute eigenvalue ordering in the sense that \(0 < |\lambda_L^{(1)} - \sigma| < |\lambda_L^{(2)} - \sigma| \le |\lambda_L^{(3)} - \sigma| \le \dots \to \infty\) since all \(\lambda_L^{(i)}\) are positive and \(\sigma < \lambda_L^{(1)}\). This property ensures, for example, that the unshifted and the \(\sigma\)-shifted inverse power method (see \cref{def:shiftedIPM}) always converge to the same eigenpair.
\end{remark}
\cref{thm:asymptoticBehaviorOfXDirectionNew} gives an abstract description of the homogenized equation. However, for our present setup, we can even solve the equation analytically (which will be important later in \cref{ssec:homogenizationNumerics}):
\begin{remark}\label{rem:productOrSolenoidalImpliesDiagonalHomOp}
  The homogenization problem in \cref{thm:asymptoticBehaviorOfXDirectionNew} is posed with an isotropic operator \(\rho I\), and \(\rho\) is either periodic or zero on the unit cell boundaries \(\partial \Omega_1\). Thus, every column of \(\rho I\) is a solenoidal vector field in \(\Omega_1\) in the integral sense by the divergence theorem. Hence, we can conclude with~\cite[p17]{jikovHomogenizationDifferentialOperators1994} that the homogenized operator is diagonal.
  Then, the diagonality allows us to explicitly state the homogenized eigenpairs as the Laplacian eigenfunctions on the hyper rectangle with scaled Laplacian eigenvalues as \(\nu^{(m)} = \pi^2 \left( \sum_{i=1}^p \bar{D}_{ii} m_i^2 \right) / \bar{C}\) and \(u^{(m)}(\boldsymbol{x}) = \mathcal{N}^{(m)} \prod_{i}^{p} \sin(m_i \pi x_i)\), where the set \(\mathcal{M} = \{m_i,\dots,m_p\} \in {\mathbb{N}}^p, |\mathcal{M}|=m\), is chosen to minimize \(\nu^{(m)}\). The \(\mathcal{N}^{(m)}\) factors are defined by the normalization condition \(\int_{{(0,1)}^p} \bar{C} {\left(u^{(m)}\right)}^2 = 1\).
\end{remark}
We now return to the convergence properties of the eigenvalue solvers. \cref{thm:fundamentalRatioBoundedNewNew} implies a constant number of iterations for all eigensolvers that are shift-and-invert preconditioned with \(\sigma = \lambda_{\mathcal{B}_\#,\mathcal{B}_d,1,V}^{(1)}(\Omega_1)\) and depend on the fundamental ratio. With this strategy, the eigensolver can reach a given residual norm with a constant number of iterations for all \(L \to \infty\).

\section{Spatial Discretization and Iterative Eigensolvers}\label{sec:discretizationAndIterativeEigensolvers}
To solve the eigenvalue problem \cref{eq:schroedingerEquation} numerically, we will discretize the continuous equation on a finite-dimensional space. Then, we solve the resulting system with a preconditioned algebraic eigensolver.

\subsection{Galerkin Finite Element Approach}
Consider a conforming and shape-regular partition \(\mathcal{T}_h\) of the domain \(\Omega_L\) into finite elements \(\tau \in \mathcal{T}_h\), which have a polygonal shape. We write \(\mathcal{T}_h\) for partitions where every element has a diameter of at most \(2h\)~\cite[p36]{sunFiniteElementMethods2016}. Define the finite element subspace \(\mathbb{H}_h(\Omega_L) \subset H_{0}^1(\Omega_L)\), consisting of polynomial functions with total degree \(r\) from the polynomial space \(\mathcal{P}_r\), to be \(\mathbb{H}_h(\Omega_L) = \left\{ u \in H_{0}^1(\Omega_L) \; \middle|\ u|_{\tau} \in \mathcal{P}_r(\tau) \; \forall \tau \in \mathcal{T}_h \right\}\).
We then search for a discrete solution \(\phi_h^{(m)} \in \left( \mathbb{H}_h(\Omega_L) \setminus \{0\} \right)\), such that
\begin{equation}\label{eq:weakFormDiscrete}
    \forall v_h \in \mathbb{H}_h(\Omega_L)
    : \quad
    \int_{\Omega_L} \nabla \phi_h^{(m)} \cdot \nabla v_h \dd \boldsymbol{z}
    +
    \int_{\Omega_L} V \phi_h^{(m)} v_h \dd \boldsymbol{z}
    =
    \lambda_h^{(m)} \int_{\Omega_L} \phi_h^{(m)} v_h \dd \boldsymbol{z}
    .
\end{equation}
Let now \(\boldsymbol{x}_h^{(m)}\) be the coefficient vector that represents \(\phi_h^{(m)}\) in a given basis of \(\mathbb{H}_h(\Omega_L)\). We then obtain the equivalent generalized algebraic eigenvalue problem: Find \(\boldsymbol{x}_h^{(m)} \in \mathbb{R}^{n} \setminus \{\boldsymbol{0}\}\), such that
\begin{equation}\label{eq:algebraicEigenvalueProblemGeneralized}
  \boldsymbol{A} \boldsymbol{x}_h^{(m)} = \lambda_h^{(m)} \boldsymbol{B} \boldsymbol{x}_h^{(m)}
  ,
\end{equation}
where \(\boldsymbol{A} \in \mathbb{R}^{n \times n}\) consists of the usual stiffness matrix plus the contribution from the potential, and \(\boldsymbol{B} \in \mathbb{R}^{n \times n}\) denotes the mass matrix. Both \(\boldsymbol{A}\) and \(\boldsymbol{B}\) as finite representations of the continuous operators in \cref{eq:schroedingerEquation} are symmetric positive definite. Since the discrete problem is formulated on a subspace \(\mathbb{H}_h(\Omega_L) \subset H^1_0(\Omega_L)\), we have by the min-max characterization that \(\lambda^{(m)} \le \lambda_h^{(m)}\). Furthermore, we have \(\lambda_h^{(m)} \to \lambda^{(m)}\) for \(h \to 0\)~\cite{babuskaFiniteElementGalerkinApproximation1989,sunFiniteElementMethods2016}.
\par
For the calculation of the quasi-optimal shift \(\lambda_{\varphi_y}\), we solve
\begin{equation}\label{eq:weakFormDiscreteCellProblem}
  \forall v_h \in \mathbb{H}_h^{\varphi_y}(\Omega_1)
  : \quad
  \int_{\Omega_1} \nabla \varphi_{y,h}^{(1)} \cdot \nabla v_h \dd \boldsymbol{z}
  +
  \int_{\Omega_1} V \varphi_{y,h}^{(1)} v_h \dd \boldsymbol{z}
  =
  \lambda_{\varphi_y,h}^{(1)} \int_{\Omega_L} \varphi_{y,h}^{(1)} v_h \dd \boldsymbol{z}
  ,
\end{equation}
where \(\mathbb{H}_h^{\varphi_y}(\Omega_1) := \left\{ u \in H^1_{\mathcal{B}_\#,\mathcal{B}_d}(\Omega_1) \; \middle|\ u|_{\tau} \in \mathcal{P}_r(\tau) \; \forall \tau \in (\mathcal{T}_h \cap \Omega_1) \right\}\) with the same \(\mathcal{T}_h\) and \(r\) as for \(\mathbb{H}_h(\Omega_L)\) (assuming \(\Omega_1\)-aligned elements).

\subsection{Quasi-Optimally Preconditioned Eigenvalue Algorithms}\label{ssec:asymptoticPreconditioner}
To solve the resulting discrete eigenvalue problem \cref{eq:algebraicEigenvalueProblemGeneralized}, we use the analytic results from \cref{sec:factorization} to obtain the quasi-optimal shift as
\begin{equation}
  \sigma = \lambda_\infty = \lim_{L \to \infty} \lambda_L^{(1)} = \lambda_{\varphi_y} \approx \lambda_{\varphi_y,h}^{(1)}
  ,
\end{equation}
by combining the results of \cref{thm:factorization,thm:asymptoticBehaviorOfXDirectionNew}. Using \(\sigma\), we construct the preconditioner \(\boldsymbol{P}={(\boldsymbol{A}-\sigma\boldsymbol{B})}^{-1}\). With the generalized Rayleigh quotient \(R_{\boldsymbol{A},\boldsymbol{B}}(\boldsymbol{x}) = (\boldsymbol{x}^T \boldsymbol{A} \boldsymbol{x}) / (\boldsymbol{x}^T \boldsymbol{B} \boldsymbol{x})\), we define:
\begin{definition}[Shifted Inverse Power Method, \(\text{IP}_{\sigma}\)]\label{def:shiftedIPM}
  Let \(\boldsymbol{A},\boldsymbol{B} \in \mathbb{R}^{n \times n}\) and a start vector \(\boldsymbol{x}_0 \in \mathbb{R}^n\) be given, repeat
  \begin{equation}
    \tilde{\boldsymbol{x}}_k = \boldsymbol{P} \boldsymbol{B} \boldsymbol{x}_{k-1} \label{eq:shiftedIPM}
    , \quad
    \boldsymbol{x}_k = \tilde{\boldsymbol{x}}_k / \sqrt{\tilde{\boldsymbol{x}}_k^T \boldsymbol{B} \tilde{\boldsymbol{x}}_k}
    , \quad
    \lambda_k = R_{\boldsymbol{A},\boldsymbol{B}}(\boldsymbol{x}_k)
    ,
  \end{equation}
  until \({\|\boldsymbol{A} \boldsymbol{x}_k - \lambda_k \boldsymbol{B} \boldsymbol{x}_k\|}_2 < \texttt{TOL}\) or \(k > k_{\text{max}}\).
\end{definition}
\begin{definition}[Locally Optimal Preconditioned Conjugate Gradient Method, \(\text{LOPCG}_{\sigma}\)]\label{def:LOPCG}
  Let \(\boldsymbol{A},\boldsymbol{B} \in \mathbb{R}^{n \times n}\) and the start vectors \(\boldsymbol{x}_{-1}, \boldsymbol{x}_0 \in \mathbb{R}^n\) be given, repeat
  \begin{align}\label{eq:LOPCG}
    \begin{gathered}
    \boldsymbol{w}_k = \boldsymbol{P} (\boldsymbol{A} \boldsymbol{x}_{k-1} - R_{\boldsymbol{A},\boldsymbol{B}}(\boldsymbol{x}_{k-1}) \boldsymbol{B} \boldsymbol{x}_{k-1})
    , \quad
    S_k = \text{span}(\{\boldsymbol{x}_{k-1},\boldsymbol{w}_{k},\boldsymbol{x}_{k-2}\})
    \\
    \tilde{\boldsymbol{x}}_k = \arg \min_{\boldsymbol{y} \in S_k} R_{\boldsymbol{A},\boldsymbol{B}}(\boldsymbol{y})
    , \quad
    \boldsymbol{x}_k = \tilde{\boldsymbol{x}}_k / \sqrt{\tilde{\boldsymbol{x}}_k^T \boldsymbol{B} \tilde{\boldsymbol{x}}_k} , \quad \lambda_k = R_{\boldsymbol{A},\boldsymbol{B}}(\boldsymbol{x}_k)
    ,
    \end{gathered}
  \end{align}
  until \({\|\boldsymbol{A} \boldsymbol{x}_k - \lambda_k \boldsymbol{B} \boldsymbol{x}_k\|}_2 < \texttt{TOL}\) or \(k > k_{\text{max}}\).
\end{definition}
In \cref{eq:LOPCG}, the locally optimal step is calculated by minimizing in a 3-dimensional subspace with the standard Rayleigh--Ritz method~\cite{baiTemplatesSolutionAlgebraic2000} as \(\tilde{\boldsymbol{x}}_k = \alpha_1 \boldsymbol{x}_{k-1} + \alpha_2 \boldsymbol{w}_{k} + \alpha_3 \boldsymbol{x}_{k-2}\), where the coefficients \(\boldsymbol{\alpha} \in \mathbb{R}^3\) are derived from the smallest eigenpair solution of the 3-dimensional eigenvalue problem \(\boldsymbol{V}^T \boldsymbol{A} \boldsymbol{V} \boldsymbol{\alpha} = \lambda^{(1)} \boldsymbol{V}^T \boldsymbol{B} \boldsymbol{V} \boldsymbol{\alpha}\) with \(\boldsymbol{V} = \begin{bmatrix} \boldsymbol{x}_{k-1} \; \boldsymbol{w}_{k} \; \boldsymbol{x}_{k-2} \end{bmatrix} \in \mathbb{R}^{n \times 3}\).
\par
The above two methods represent the class of gap-dependent iterative eigenvalue algorithms. Optimization-inspired Riemannian gradient algorithms also depend on the fundamental ratio~\cite[Thm.~3.2]{henningSobolevGradientFlow2020}. Alternative approaches, such as the Rayleigh quotient iteration or block algorithms, are not considered in our setup since the former has no guaranteed convergence to the ground state~\cite[p53]{baiTemplatesSolutionAlgebraic2000}. At the same time, the latter requires an \(L\)-proportional block size to retain a quasi-optimal convergence~\cite[p54]{baiTemplatesSolutionAlgebraic2000}.

\section{Numerical Experiments}\label{sec:numericalExamples}
This section concerns the numerical evaluation of the proposed eigensolver preconditioner. We implemented our method using the \texttt{Gridap}~\cite{badiaGridapExtensibleFinite2020} framework in the Julia programming language~\cite{bezansonJuliaFreshApproach2017}. \texttt{Gridap} turned out to be a very well-suited framework for our tests since it allowed us to quickly implement weak formulations in a high-level fashion, similar to the \texttt{FEniCS}~\cite{alnaesFEniCSProjectVersion2015,theisenFenicsR13TensorialMixed2021} framework in Python. For reproducibility, we provide all examples publicly in~\cite{theisenDdEigenLabJlDomainDecomposition2022}.

\subsection{Homogenization of a Degenerate Eigenvalue Problem with Two Expanding Directions in Three Dimensions}\label{ssec:homogenizationNumerics}
Before we employ the constructed preconditioner for the linear Schrödinger eigenvalue problem \cref{eq:schroedingerEquation}, we first investigate the homogenization results of \cref{thm:asymptoticBehaviorOfXDirectionNew} since these results can be applied and studied independently. Thus, the theoretical predictions about the convergence of the \(m\)-dependent contribution \(u_{\mathcal{B}_d,\mathcal{B}_n,\rho,0}^{(m)}(\Omega_L),\lambda_{\mathcal{B}_d,\mathcal{B}_n,\rho,0}^{(m)}(\Omega_L)\) in three dimensions (\(p=2, q=1\)) are studied numerically. We prescribe the weight function by
\begin{equation}\label{eq:constructedWeight}
  \rho(\boldsymbol{x},\boldsymbol{y})
  =
  {\left(
    \tfrac{27}{4} y_1^2 (1-y_1) \left( 10 \cos{(\pi x_1)}^2 + 10 \cos{(\pi x_2)}^2 + \tfrac{11}{10} - \sin{(\pi y_1)}^2 \right)
  \right)}^2
  .
\end{equation}
Note that we do not set \(\rho={(\psi u_{y,1})}\) as in \cref{thm:asymptoticBehaviorOfXDirectionNew} since we want to demonstrate the results for the more general case of \(\rho\) not being induced by eigenfunctions but only satisfying the periodicity- and zero-condition on the \(\boldsymbol{x}\)- and \(\boldsymbol{y}\)-boundary respectively. The weight function \(\rho\) in \cref{eq:constructedWeight} is positive a.e.\ and vanishes only on the \(\boldsymbol{y}\)-boundary. By construction, \(\rho\) is \(\boldsymbol{x}\)-periodic, thus fulfilling all requirements of \cref{thm:asymptoticBehaviorOfXDirectionNew}. We intentionally use the \(\boldsymbol{x}\)-symmetry also to confirm the convergence of degenerate eigenpairs. For a better evaluation, we do not solve for \(\Omega_L\) but solve an equivalent problem on the reference domain \(\Omega_1\), where we factorized the \((1/L^2)\)-scaling (see \cref{thm:asymptoticBehaviorOfXDirectionNew}) of the eigenvalue without affecting the eigenfunctions. To be precise, we check if the solution to
\begin{align}\label{eq:numericalValidationHomogenizationTheoremEquation}
  \left\{
  \begin{aligned}
    - \nabla \cdot \left( \rho(L \boldsymbol{x} ,\boldsymbol{y})
    \diag{\left(1,1,\tfrac{1}{L^2}\right)}
    \nabla u_{1/L}^{(m)} \right) &= \lambda_{1/L}^{(m)} \rho(L \boldsymbol{x} ,\boldsymbol{y}) u_{1/L}^{(m)} \text{ in } {(0,1)}^3
    \\
    u_{1/L}^{(m)} &= 0 \text{ on } \partial {(0,1)}^2 \times {(0,1)}
  \end{aligned}
  \right.
  ,
\end{align}
converge to \((u_0^{(m)},\nu^{(m)})\) from \cref{eq:homogenizedEigenpair} in the limit for \(L \to \infty\). The calculation of this homogenized limit first needs the corrector functions to define the homogenized operators. Thus, we solve the corrector equation \cref{eq:thetaCorrectorProblem} using \(\mathbb{Q}_2\) finite elements on a structured mesh with \(300\) intervals per direction. These corrector solutions allow the construction of the homogenized coefficients (according to \cref{eq:homogenizedEigenpair,eq:homogenizedCoeffsNewNew}) with \(\bar{D} \approx \text{diag}(38.75893,38.75893)\) and \(\bar{C} \approx 57.86864\). We observe that \(\bar{D}_{11}=\bar{D}_{22}\) as the result of choosing an \((x_1,x_2)\)-symmetric weight function \(\rho\). The homogenized diffusion matrix is diagonal since we have \(\int_{\Omega_1} \nabla \cdot (\rho I ) = 0\) by the divergence theorem as \(\rho\) defined by \cref{eq:constructedWeight} is either periodic or zero on the boundary of the unit cube, which resembles the case of \cref{rem:productOrSolenoidalImpliesDiagonalHomOp}. Therefore, we can solve the homogenized equation analytically (with the expressions from \cref{rem:productOrSolenoidalImpliesDiagonalHomOp}) to obtain
\begin{equation}\label{eq:homogenizedLimitAnalytSols}
\begin{gathered}
  \nu^{(1)} = \frac{\pi^2 \left( 1^2 \bar{D}_{11} + 1^2 \bar{D}_{22} \right)}{\bar{C}} = \frac{2 \pi^2 \bar{D}_{11}}{\bar{C}}
  , \;
  \nu^{(2)} =
  \nu^{(3)} = \frac{5 \pi^2 \bar{D}_{11}}{\bar{C}}
  , \;
  \nu^{(4)} = \frac{8 \pi^2 \bar{D}_{11}}{\bar{C}}
  \\
  u_0^{(1)} = \mathcal{N} \sin(\pi x_1) \sin(\pi x_2)
  , \;
  u_0^{(2)} = \mathcal{N} \sin(2 \pi x_1) \sin(\pi x_2)
  ,
  \\
  u_0^{(3)} = \mathcal{N} \sin( \pi x_1) \sin(2 \pi x_2)
  , \;
  u_0^{(4)} = \mathcal{N} \sin(2 \pi x_1) \sin(2 \pi x_2)
\end{gathered}
\end{equation}
with the normalization constant \(\mathcal{N}=2/\sqrt{\bar{C}} \approx 0.26291\) since
\begin{equation}
  \left(\int_0^1 \sin^2{(m_1 \pi x_1)}\dd x_1\right) \cdots \left(\int_0^1 \sin^2{(m_p \pi x_p)}\dd x_p\right) = 2^{-p/2} \; \forall \boldsymbol{m} \in \mathbb{N}^p
  .
\end{equation}
\par
We then solve the eigenvalue problem \cref{eq:numericalValidationHomogenizationTheoremEquation} using the Galerkin method with \(\mathbb{Q}_2\) elements and a structured partition of both expanding directions into \(12L\) intervals. According to \cref{thm:asymptoticBehaviorOfXDirectionNew}, the non-relevant third direction is only discretized with six partitions since it is not relevant in the limit. Finally, we solve the corresponding algebraic eigenvalue problem using a block LOPCG method up to a tolerance of \(10^{-6}\).
\begin{figure}[t]
  \centering
  \includegraphics[width=0.4\linewidth]{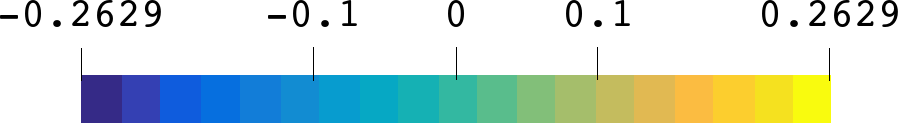}
  \\[-1ex]
  \raisebox{0.07\linewidth}{\raisebox{-3pt}{\(u_{1/L,h}^{(1)}\):}}
  \subfloat[
    \(L=2^0\)
  ]{\includegraphics[width=0.135\linewidth]{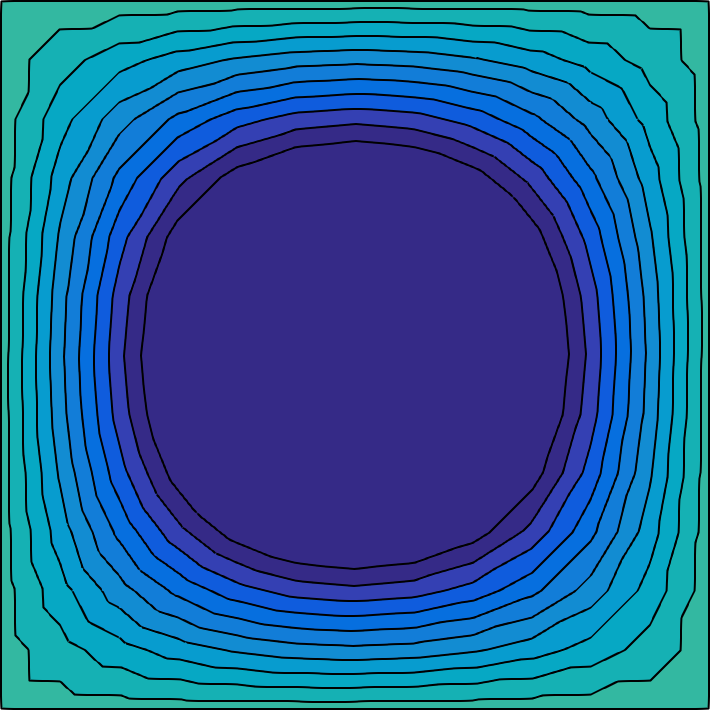}}
  \hfill
  \subfloat[
    \(L=2^1\)
  ]{\includegraphics[width=0.135\linewidth]{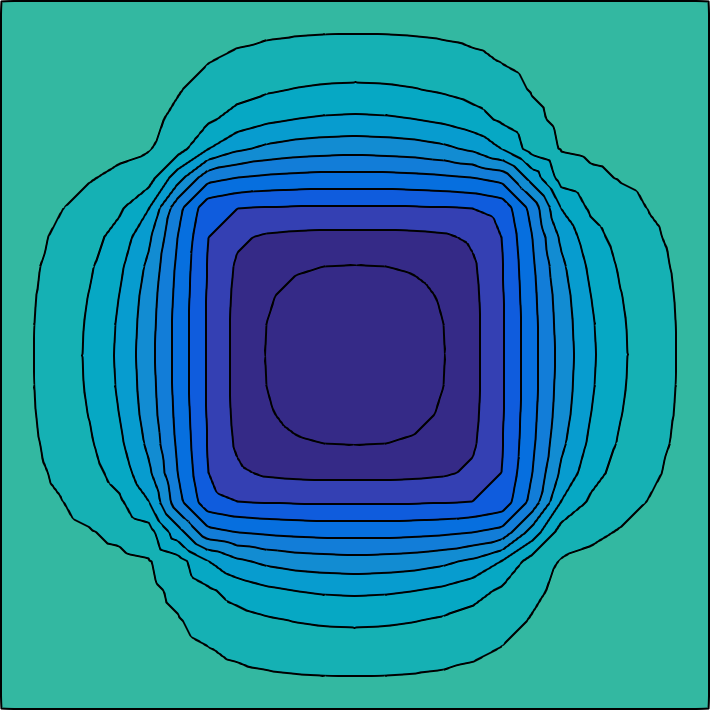}}
  \hfill
  \subfloat[
    \(L=2^2\)
  ]{\includegraphics[width=0.135\linewidth]{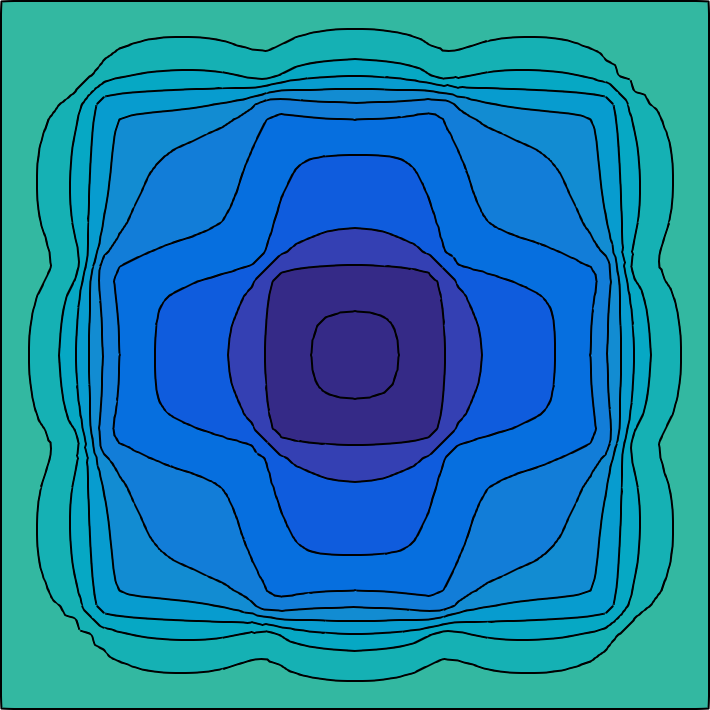}}
  \hfill
  \subfloat[
    \(L=2^3\)
  ]{\includegraphics[width=0.135\linewidth]{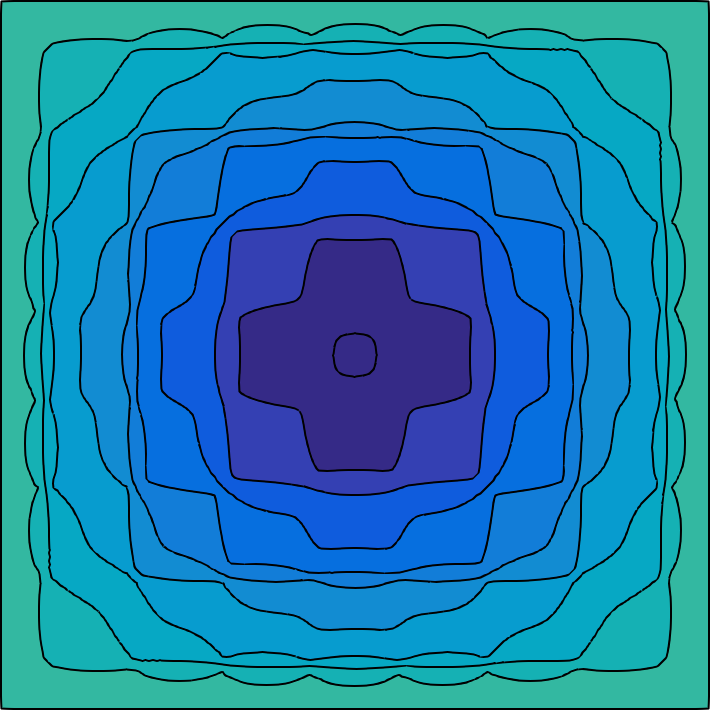}}
  \hfill
  \subfloat[
    \(L=2^4\)
  ]{\includegraphics[width=0.135\linewidth]{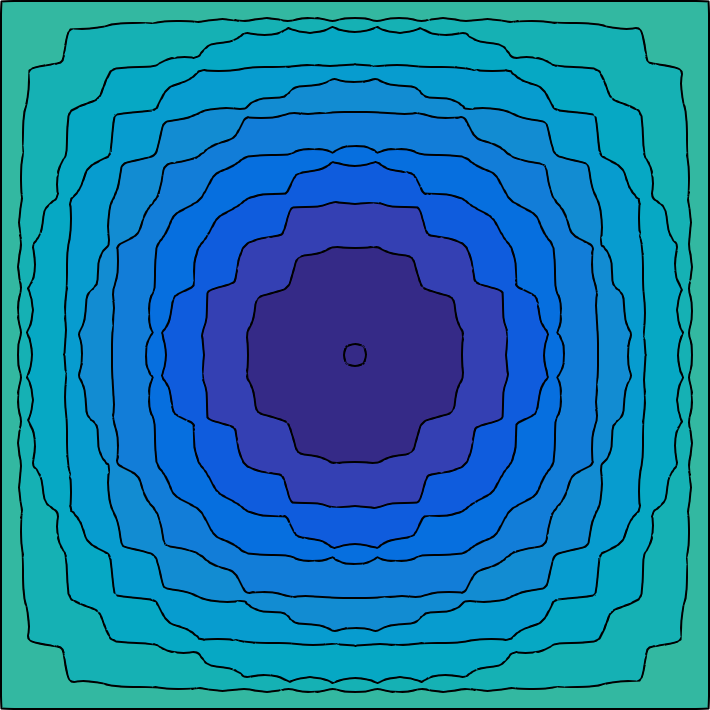}}
  \hspace{2pt}
  \raisebox{0.07\linewidth}{\raisebox{-3pt}{\(\overset{L \to \infty}{\rightarrow}\)}}
  \subfloat[
    \(u_0^{(1)}\)
  ]{\includegraphics[width=0.135\linewidth]{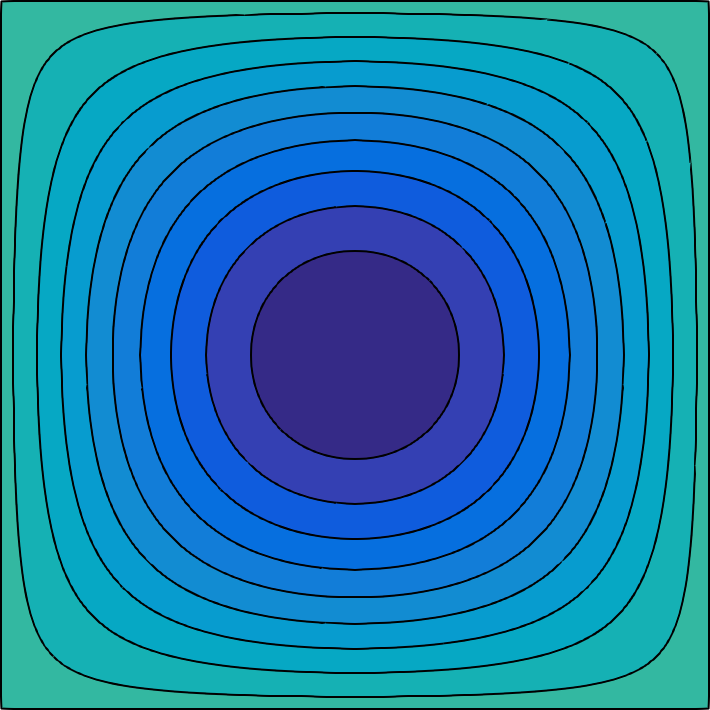}}
  \\[-2ex]
  \raisebox{0.07\linewidth}{\raisebox{-3pt}{\(u_{1/L,h}^{(2)}\):}}
  \subfloat[
    \(L=2^0\)
  ]{\includegraphics[width=0.135\linewidth]{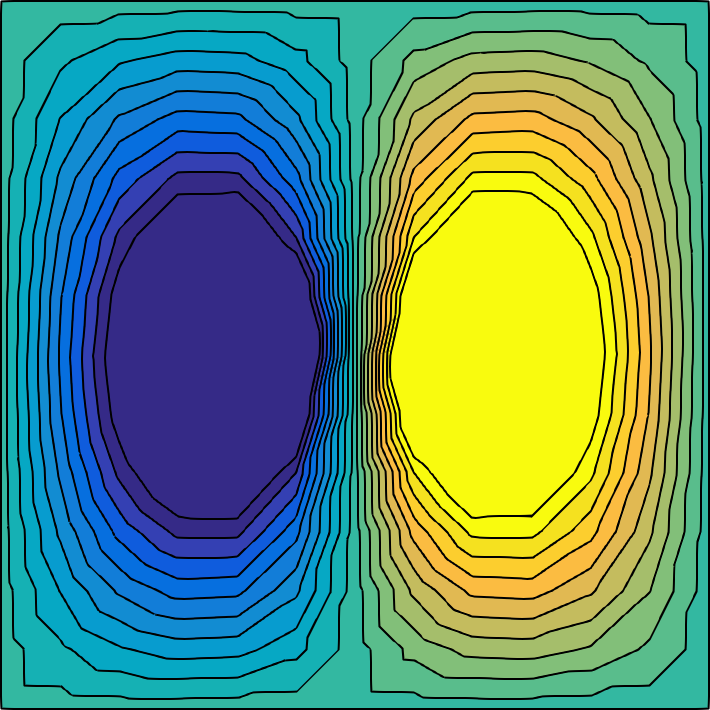}}
  \hfill
  \subfloat[
    \(L=2^1\)
  ]{\includegraphics[width=0.135\linewidth]{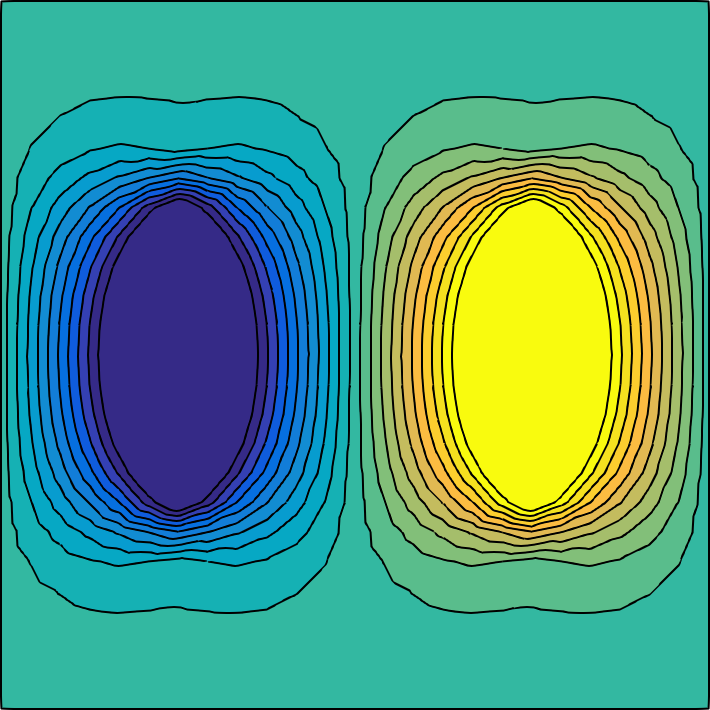}}
  \hfill
  \subfloat[
    \(L=2^2\)
  ]{\includegraphics[width=0.135\linewidth]{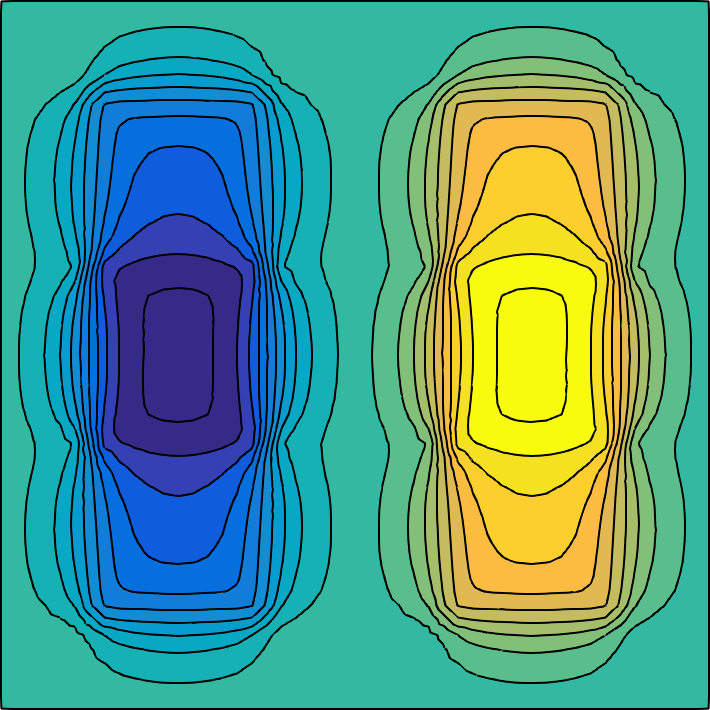}}
  \hfill
  \subfloat[
    \(L=2^3\)
  ]{\includegraphics[width=0.135\linewidth]{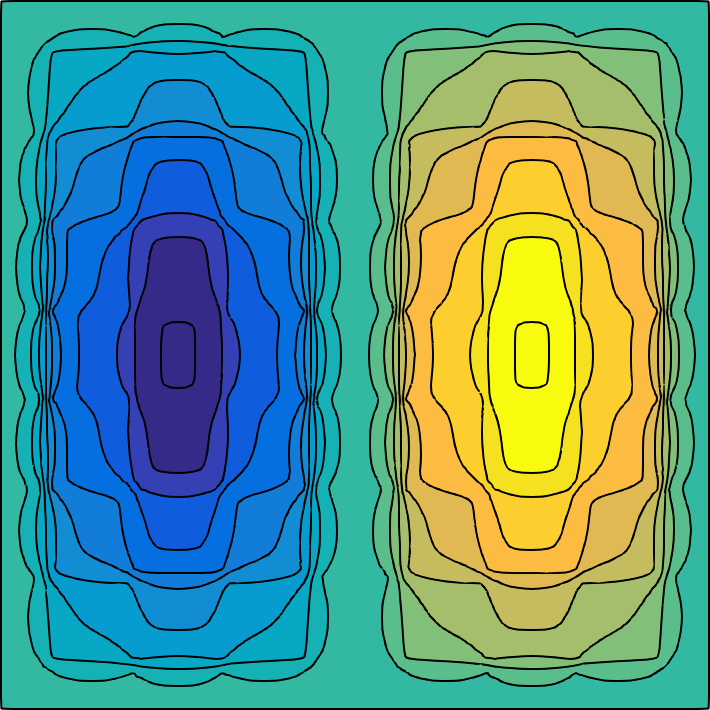}}
  \hfill
  \subfloat[
    \(L=2^4\)
  ]{\includegraphics[width=0.135\linewidth]{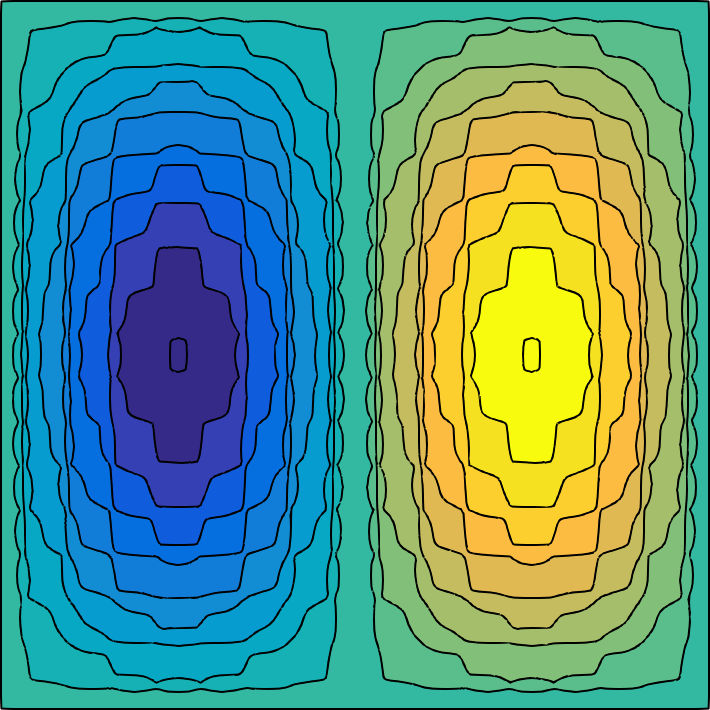}}
  \hspace{2pt}
  \raisebox{0.07\linewidth}{\raisebox{-3pt}{\(\overset{L \to \infty}{\rightarrow}\)}}
  \subfloat[
    \(u_0^{(2)}\)
  ]{\includegraphics[width=0.135\linewidth]{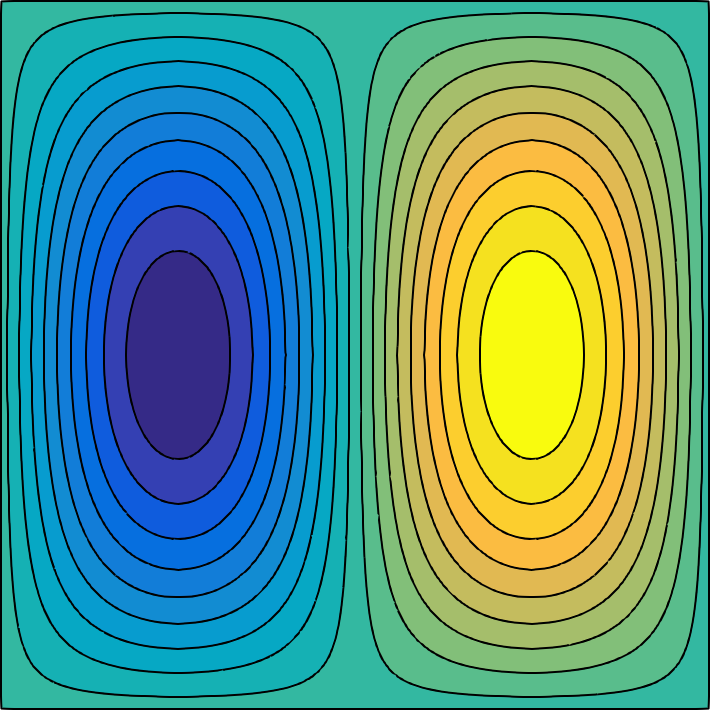}}
  \\[-2ex]
  \raisebox{0.07\linewidth}{\raisebox{-3pt}{\(u_{1/L,h}^{(3)}\):}}
  \subfloat[
    \(L=2^0\)
  ]{\includegraphics[width=0.135\linewidth]{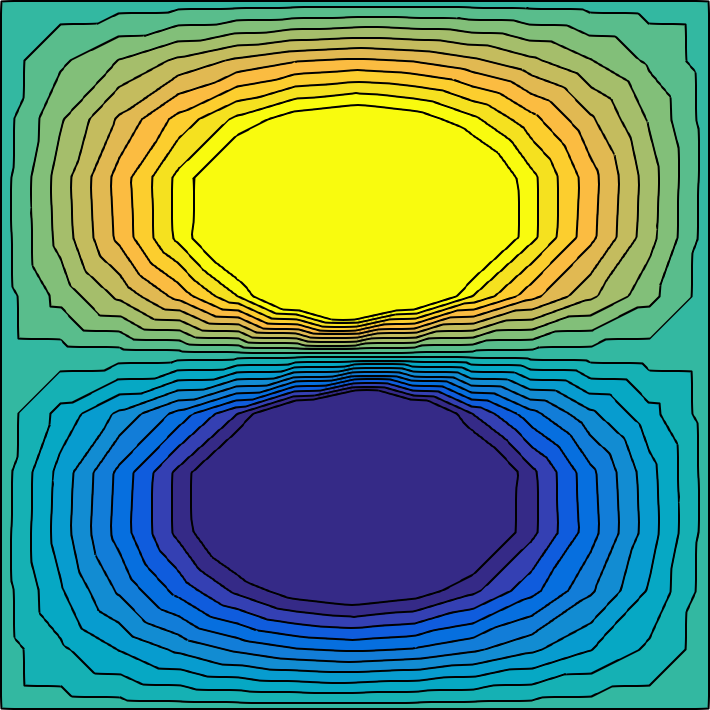}}
  \hfill
  \subfloat[
    \(L=2^1\)
  ]{\includegraphics[width=0.135\linewidth]{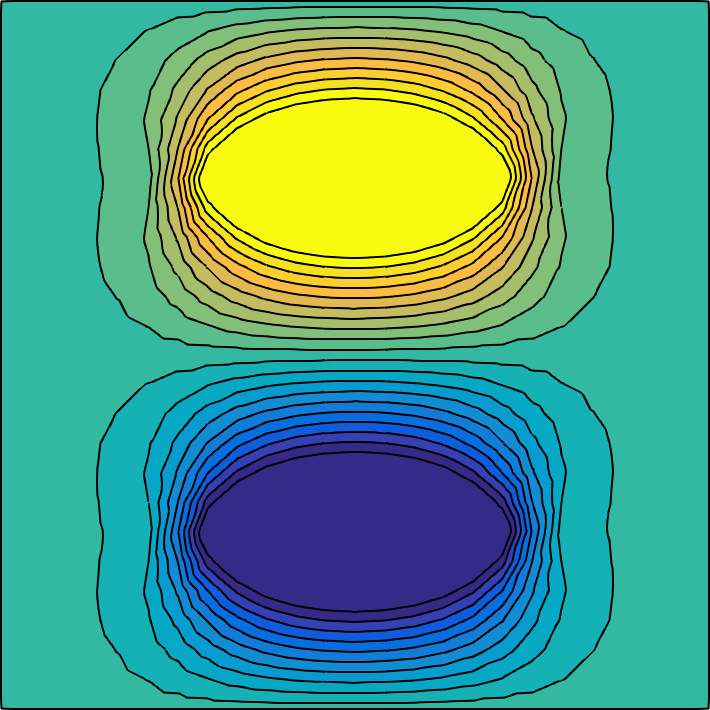}}
  \hfill
  \subfloat[
    \(L=2^2\)
  ]{\includegraphics[width=0.135\linewidth]{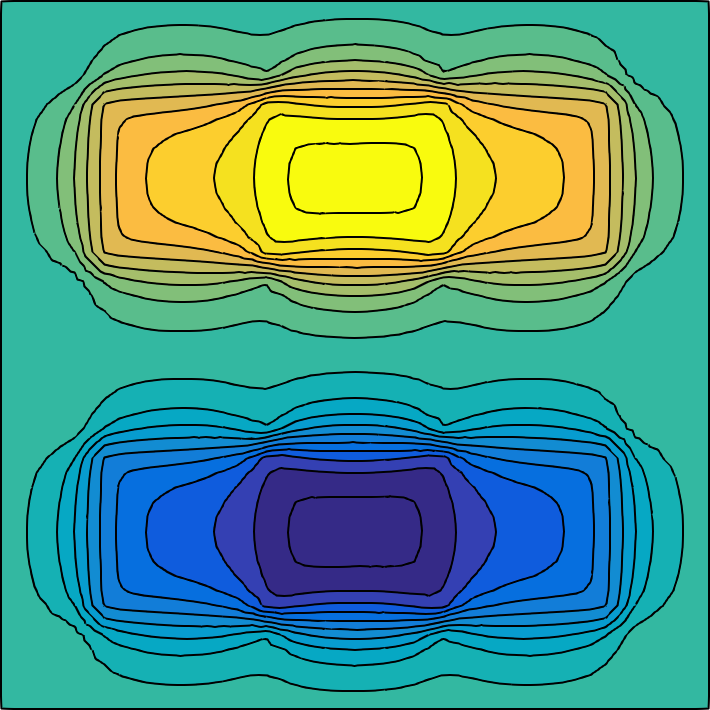}}
  \hfill
  \subfloat[
    \(L=2^3\)
  ]{\includegraphics[width=0.135\linewidth]{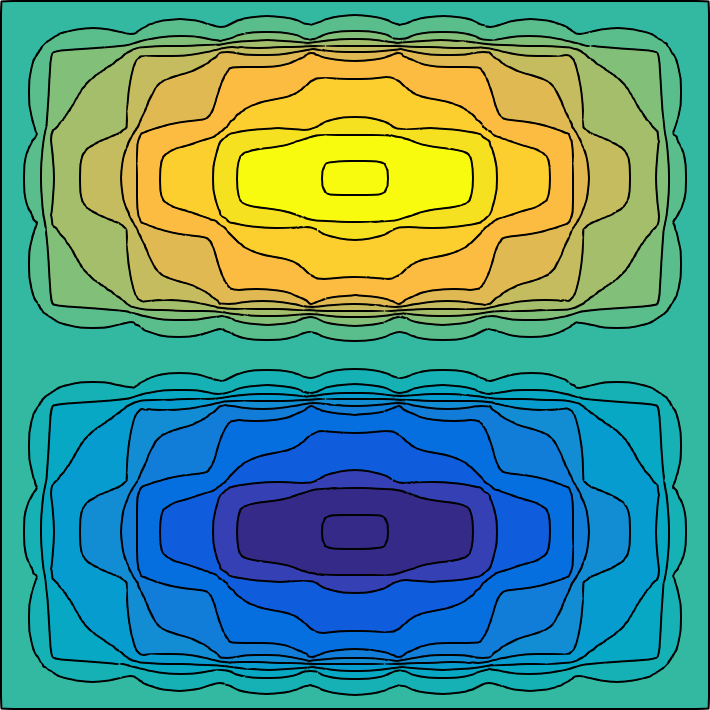}}
  \hfill
  \subfloat[
    \(L=2^4\)
  ]{\includegraphics[width=0.135\linewidth]{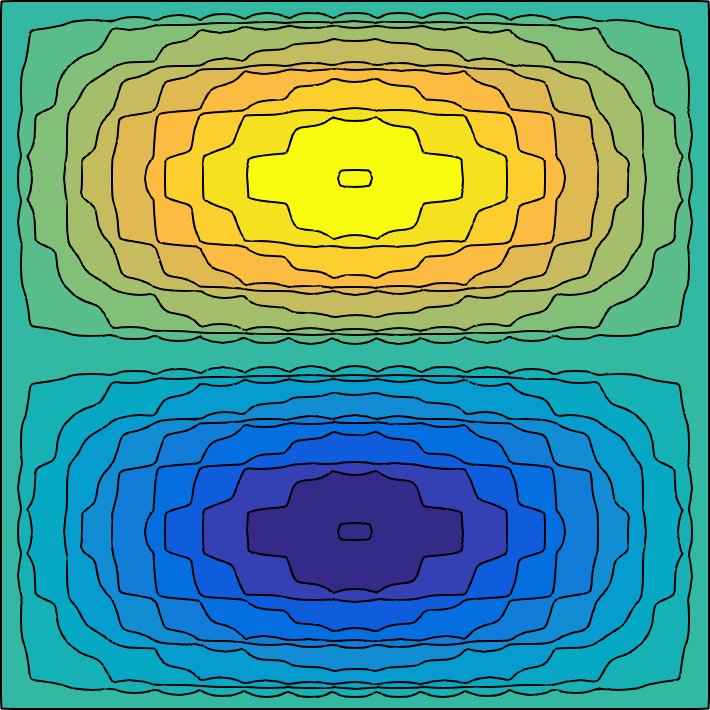}}
  \hspace{2pt}
  \raisebox{0.07\linewidth}{\raisebox{-3pt}{\(\overset{L \to \infty}{\rightarrow}\)}}
  \subfloat[
    \(u_0^{(3)}\)
  ]{\includegraphics[width=0.135\linewidth]{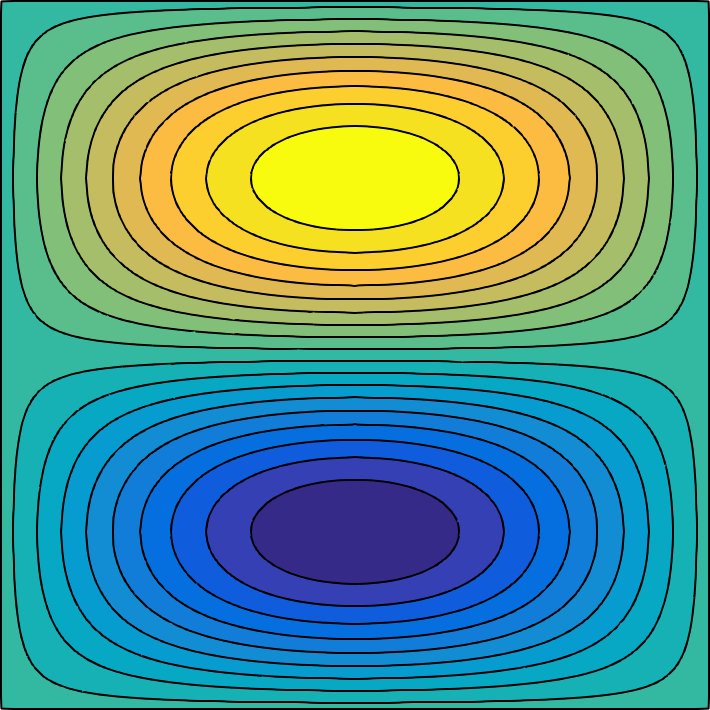}}
  \\[-2ex]
  \raisebox{0.07\linewidth}{\raisebox{-3pt}{\(u_{1/L,h}^{(4)}\):}}
  \subfloat[
    \(L=2^0\)
  ]{\includegraphics[width=0.135\linewidth]{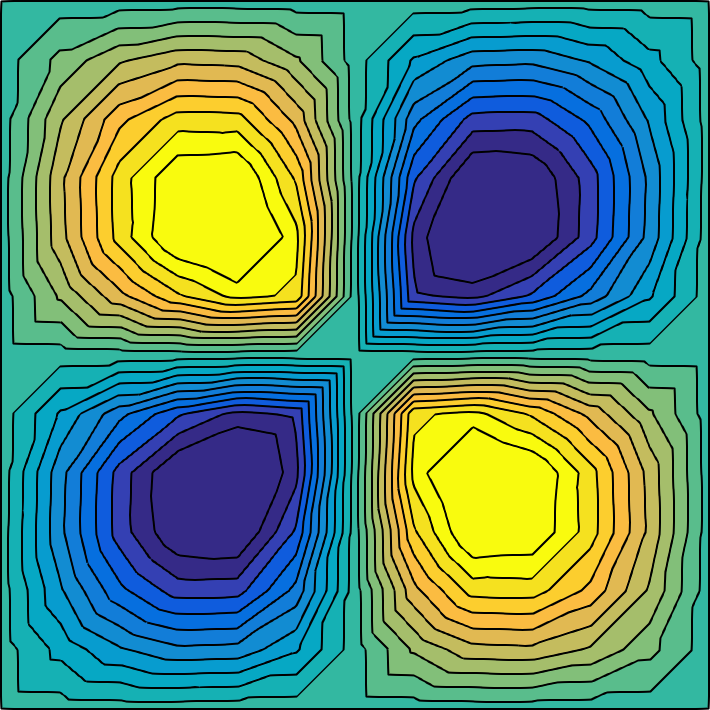}}
  \hfill
  \subfloat[
    \(L=2^1\)
  ]{\includegraphics[width=0.135\linewidth]{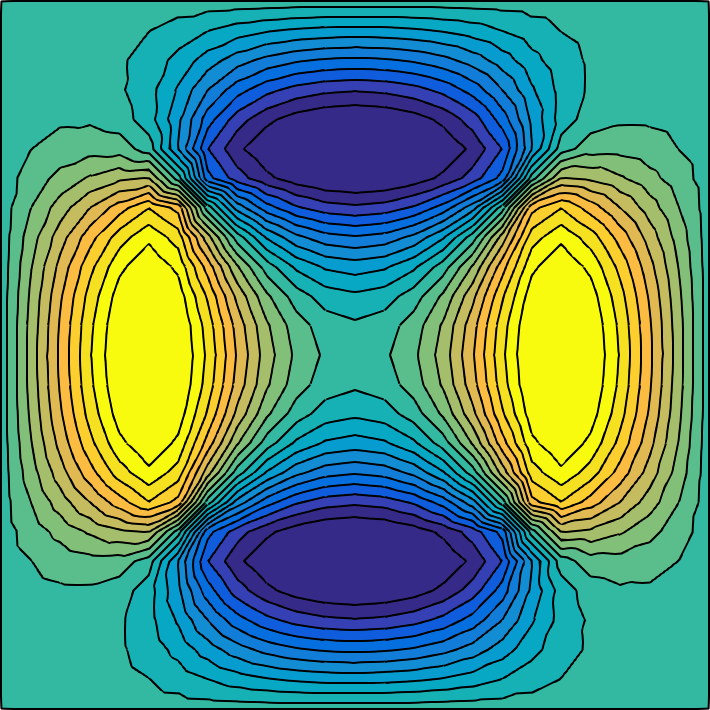}}
  \hfill
  \subfloat[
    \(L=2^2\)
  ]{\includegraphics[width=0.135\linewidth]{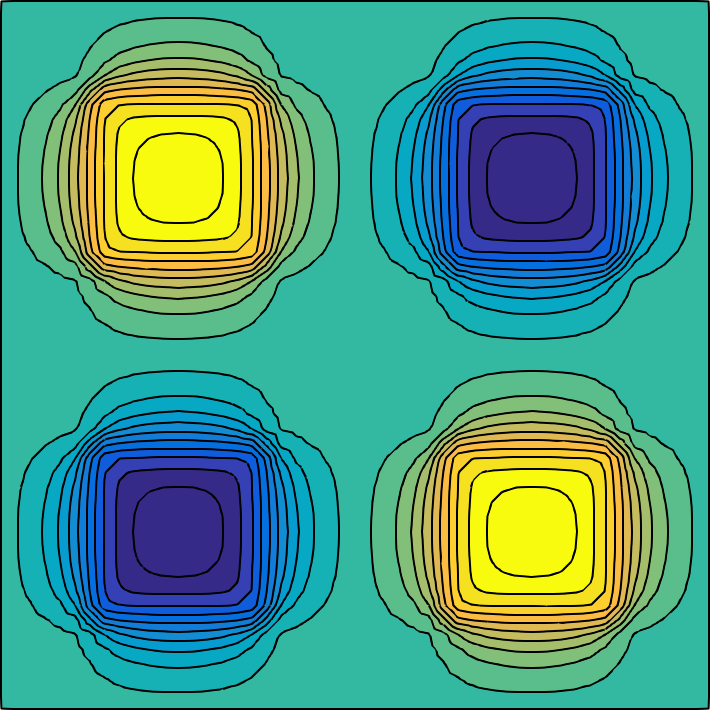}}
  \hfill
  \subfloat[
    \(L=2^3\)
  ]{\includegraphics[width=0.135\linewidth]{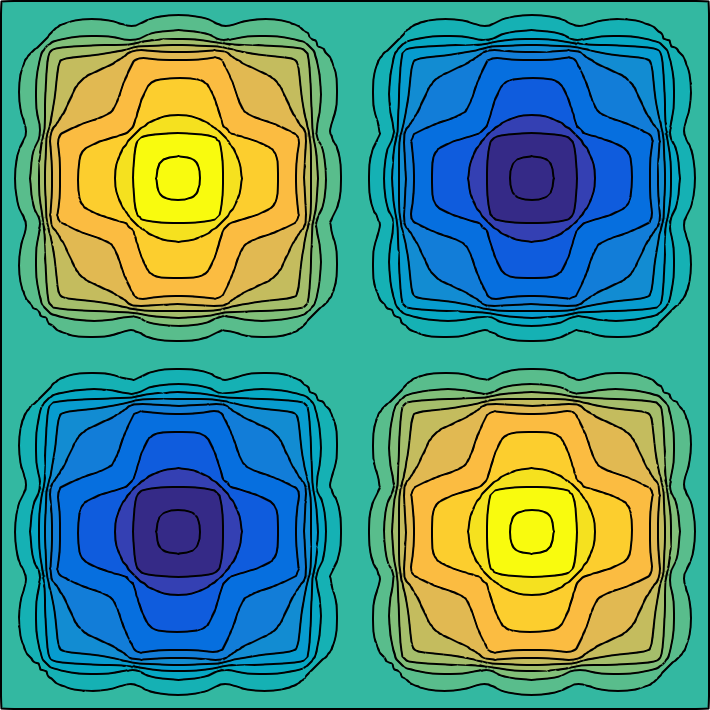}}
  \hfill
  \subfloat[
    \(L=2^4\)
  ]{\includegraphics[width=0.135\linewidth]{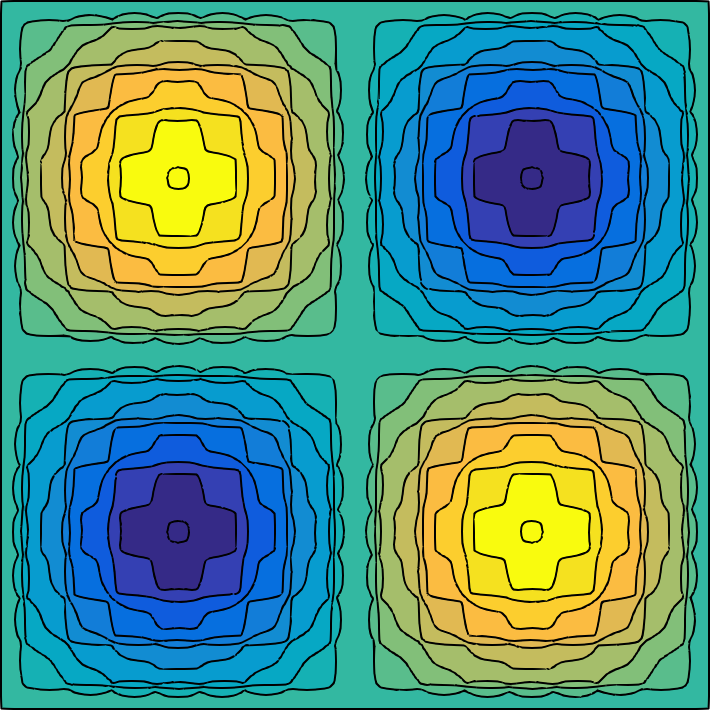}}
  \hspace{2pt}
  \raisebox{0.07\linewidth}{\raisebox{-3pt}{\(\overset{L \to \infty}{\rightarrow}\)}}
  \subfloat[
    \(u_0^{(4)}\)
  ]{\includegraphics[width=0.135\linewidth]{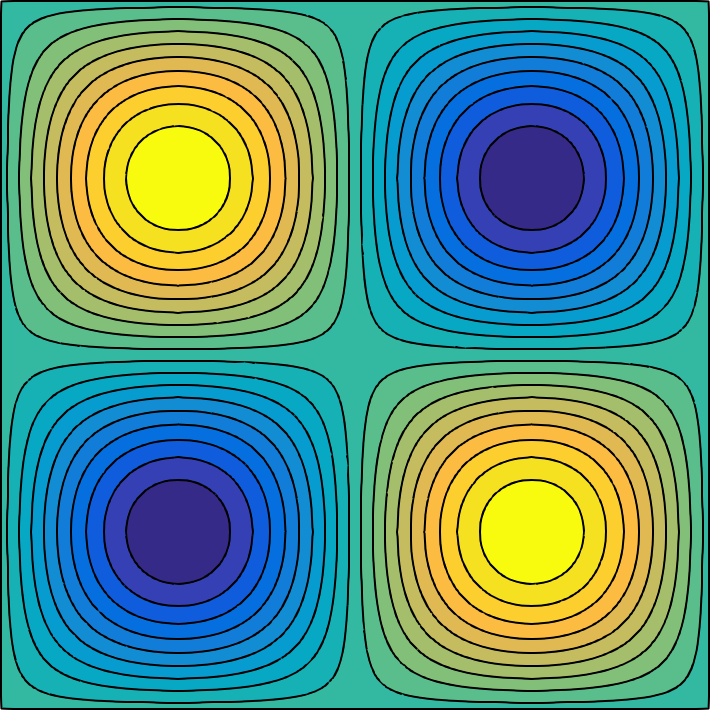}}
  \caption{
    The first four calculated eigenfunctions of the eigenvalue homogenization problem \cref{eq:numericalValidationHomogenizationTheoremEquation} converge weakly for \(L\to\infty\) to the solutions of the homogenized equation. The figure presents two-dimensional cut-planes through the middle of the domain at \(y_1=1/2\).
  }\label{fig:homogeniationEigenfunctions}
\end{figure}
\par
For the error comparison, we project the analytical solutions \cref{eq:homogenizedLimitAnalytSols} into the corresponding subspace \(\mathbb{H}_h\). Since \(\nu^{(2)} = \nu^{(3)}\) by our construction of \(\rho\), the corresponding eigenspace is two-dimensional, and the eigensolver returns some basis of this space. To resolve these spatial rotations and allow for an error comparison, we align the second and third eigenfunction by modifying their discrete eigenvectors with
\begin{equation}
  \boldsymbol{x}_h^{(2)} = \langle \boldsymbol{x}_h^{(2)} , \boldsymbol{x}_0^{(2)} \rangle \boldsymbol{x}_h^{(2)} + \langle \boldsymbol{x}_h^{(3)} , \boldsymbol{x}_0^{(2)} \rangle \boldsymbol{x}_h^{(3)}
  , \quad
  \boldsymbol{x}_h^{(3)} = \langle \boldsymbol{x}_h^{(2)} , \boldsymbol{x}_0^{(3)} \rangle \boldsymbol{x}_h^{(2)} + \langle \boldsymbol{x}_h^{(3)} , \boldsymbol{x}_0^{(3)} \rangle \boldsymbol{x}_h^{(3)}
  ,
\end{equation}
where \(\boldsymbol{x}_0^{(m)}\) denotes the \(m\)-th homogenized eigenvector. The resulting discrete eigenfunctions \(u_{1/L,h}^{(m)}\) for \(m=1,2,3,4\) are visualized in \cref{fig:homogeniationEigenfunctions} for \(L \in \{2^0,\dots,2^4\}\) together with the corresponding homogenized solutions \(u_0^{(m)}\). We can observe that for larger domain lengths \(L\), the eigenfunctions converge to their corresponding limits if we would neglect the oscillatory isolines that indicate strong gradients.
This observation corresponds to our theoretical results that the convergence is only weak when considering the \(H^1(\Omega_1)\)-norm. To quantify the convergence, we evaluate the relative \(L^2(\Omega_1)\)-error of the eigenfunctions and the relative eigenvalue error in \cref{fig:homogenization3dPlots} for \(L \in \mathbb{R}\) with a sampling rate of \(\Delta L = 0.1\). We measure a first-order converge for the \(L^2\)-error and at least first-order convergence for the eigenvalues. This observation matches the theoretical results from \cref{ssec:homogenization} since we proved strong convergence in \(L^2\) of the eigenfunctions and convergence of the eigenvalues to \(\nu^{(m)}\). We also examine the eigenvalues and their ratios \(\lambda_{1/L,h}^{(m)} / \lambda_{1/L,h}^{(m+1)}\) in \cref{fig:homogenization3dPlots}, where the degeneracy of \(m=2\), pre-asymptotic effects, and a non-monotonic convergence is visible. This observation confirms the prediction of \cref{cor:shiftedFundamentalRationUniformlyBounded} that the fundamental ratio can only be uniformly bounded for all \(L > L^*\) when pre-asymptotic effects have vanished.
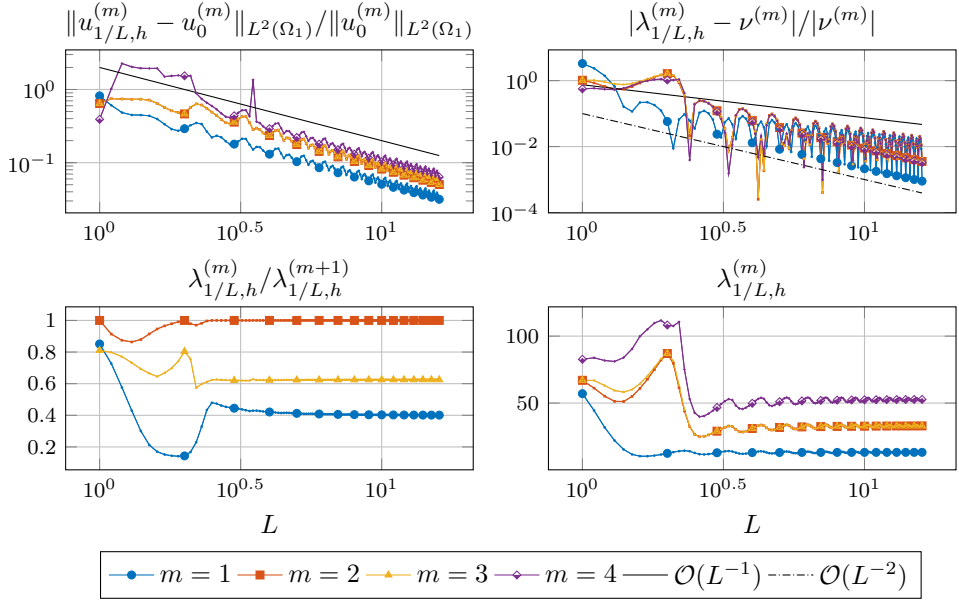
\begin{figure}[t]
  \centering
  \newcommand{\datapath}{./plots/homogenization-3d/errors}%



\begin{tikzpicture}

    \definecolor{color1}{rgb}{0,    ,0.4470,0.7410}
    \definecolor{color2}{rgb}{0.8500,0.3250,0.0980}
    \definecolor{color3}{rgb}{0.9290,0.6940,0.1250}
    \definecolor{color4}{rgb}{0.4940,0.1840,0.5560}
    \definecolor{color5}{rgb}{0.4660,0.6740,0.1880}
    \definecolor{color6}{rgb}{0.3010,0.7450,0.9330}
    \definecolor{color7}{rgb}{0.6350,0.0780,0.1840}
    \pgfplotscreateplotcyclelist{matlab}{
      color1,every mark/.append style={solid},mark=*\\
      color2,every mark/.append style={solid},mark=square*\\
      color3,every mark/.append style={solid},mark=triangle*\\
      color4,every mark/.append style={solid},mark=halfsquare*\\
      color5,every mark/.append style={solid},mark=pentagon*\\
      color6,every mark/.append style={solid},mark=halfcircle*\\
      color7,every mark/.append style={solid,rotate=180},mark=halfdiamond*\\
      color1,every mark/.append style={solid},mark=diamond*\\
      color2,every mark/.append style={solid},mark=halfsquare right*\\
      color3,every mark/.append style={solid},mark=halfsquare left*\\
    }

  \begin{groupplot}[
    group style={
      group size= 2 by 2,
      vertical sep=1.25cm,
    },
    xmode=log,
    ymode=log,
    legend style={font=\normalsize},
    legend pos=south west,
    legend columns=6,
    height=3.75cm,
    width=7cm,
    cycle list name=matlab,
    ticklabel style = {font=\footnotesize},
    grid=major,
  ]

    \nextgroupplot[
      title={$\|u_{1/L,h}^{(m)} - u_0^{(m)}\|_{L^2(\Omega_1)} / \|u_0^{(m)}\|_{L^2(\Omega_1)}$},
      title style={yshift=-0.25cm},
      legend to name=legend_homogenization,
      xtick={1,3.1622776601683795,10,31.622776601683793},
    ]
    \foreach \column in {1,...,4}{
      \addplot+[
        mark size = 1.5pt,
        mark repeat = 10,
        mark phase = 0,
      ] table [
        x index=0,
        y index=\column,
        col sep=comma,
      ] {\datapath/errorsl2.csv};
    }
    \foreach \column in {1,...,4}{

      \pgfmathsetmacro{\shift}{-4+\column-1};
      \pgfplotsset{cycle list shift=\shift}

      \addplot+[
        mark size = 0.3pt,
        forget plot  
      ] table [
        x index=0,
        y index=\column,
        col sep=comma,
      ] {\datapath/errorsl2.csv};
    }
    \addplot+[
      domain=1:16,
      color=black,
      mark=none
    ]{2*x^(-1)};
    \addplot+[ 
      domain=1:1,
      densely dash dot,
      color=black,
      mark=none
    ]{0.05*x^(-2)};
    \legend{$m=1$,$m=2$,$m=3$,$m=4$,$\mathcal{O}(L^{-1})$,$\mathcal{O}(L^{-2})$}

    \nextgroupplot[
      title={$|\lambda_{1/L,h}^{(m)} - \nu^{(m)} | / |\nu^{(m)}|$},
      title style={yshift=-0.25cm},
      xtick={1,3.1622776601683795,10,31.622776601683793},
    ]
    \foreach \column in {1,...,4}{
      \addplot+[
        mark size = 1.5pt,
        mark repeat = 10,
        mark phase = 0,
      ] table [
        x index=0,
        y index=\column,
        col sep=comma,
      ] {\datapath/errorsevals.csv};
    }
    \pgfplotsset{cycle list shift=-4}
    \foreach \column in {1,...,4}{
      \addplot+[
        mark size = 0.3pt,
      ] table [
        x index=0,
        y index=\column,
        col sep=comma,
      ] {\datapath/errorsevals.csv};
    }
    \addplot+[
      domain=1:16,
      color=black,
      mark=none
    ]{0.75*x^(-1)};
    \addplot+[
      domain=1:16,
      densely dash dot,
      color=black,
      mark=none
    ]{0.1*x^(-2)};

    \nextgroupplot[
      title={$\lambda_{1/L,h}^{(m)} / \lambda_{1/L,h}^{(m+1)}$},
      title style={yshift=-0.25cm},
      xtick={1,3.1622776601683795,10,31.622776601683793},
      xlabel={\(L\)},
      ymode=linear,
    ]
    \foreach \column in {1,...,3}{
      \addplot+[
        mark size = 1.5pt,
        mark repeat = 10,
        mark phase = 0,
      ] table [
        x index=0,
        y index=\column,
        col sep=comma,
      ] {\datapath/ratioevals.csv};
    }
    \pgfplotsset{cycle list shift=-3}
    \foreach \column in {1,...,3}{
      \addplot+[
        mark size = 0.3pt,
      ] table [
        x index=0,
        y index=\column,
        col sep=comma,
      ] {\datapath/ratioevals.csv};
    }

    \nextgroupplot[
      title={$\lambda_{1/L,h}^{(m)}$},
      title style={yshift=-0.25cm},
      xtick={1,3.1622776601683795,10,31.622776601683793},
      xlabel={\(L\)},
      ymode=linear,
    ]
    \foreach \column in {1,...,4}{
      \addplot+[
        mark size = 1.5pt,
        mark repeat = 10,
        mark phase = 0,
      ] table [
        x index=0,
        y index=\column,
        col sep=comma,
      ] {\datapath/eigval_nonhom.csv};
    }
    \pgfplotsset{cycle list shift=-4}
    \foreach \column in {1,...,4}{
      \addplot+[
        mark size = 0.3pt,
      ] table [
        x index=0,
        y index=\column,
        col sep=comma,
      ] {\datapath/eigval_nonhom.csv};
    }

  \end{groupplot}

  \coordinate (c3) at ($(group c1r1)!.5!(group c2r1)$);
  \node[below] at (c3 |- current bounding box.south){
    \pgfplotslegendfromname{legend_homogenization}
  };

\end{tikzpicture}

  \caption{
    Errors between the solution of \cref{eq:numericalValidationHomogenizationTheoremEquation} and the corresponding homogenized limit: We can observe the first-order convergence for all eigenfunctions in the \(L^2\)-norm and at least first-order convergence for the eigenvalues. The ratios between two adjacent eigenvalues reveal a degenerate state and a non-monotonic convergence for the fundamental ratio \(\lambda_{1/L,h}^{(1)}/\lambda_{1/L,h}^{(2)}\).
  }\label{fig:homogenization3dPlots}
\end{figure}

\subsection{The Quasi-Optimal Shift-And-Invert Preconditioner}\label{ssec:experimentUsingThePreconditioner}
To show the practical advantage of using the quasi-optimal preconditioning technique of \cref{ssec:asymptoticPreconditioner}, we compare the convergence histories of the IP and LOPCG method for the cases of no shift (\(\sigma=0\)), a good shift (\(\sigma=0.99\lambda_\infty\)), and the quasi-optimal shift (\(\sigma = \lambda_\infty\)). We then aim to solve \cref{eq:schroedingerEquation} on \(\Omega_L\) for \(\ell=1\) and an increasing \(L\). The quasi-optimal shift \(\lambda_\infty = \lambda_{\mathcal{B}_\#,\mathcal{B}_d,1,V}^{(1)}(\Omega_1)\) is obtained in constant time for all \(L\) since it only depends on the fixed unit cell \(\Omega_1\).
The calculations use \(\mathbb{Q}_1\) finite elements on a regular mesh with mesh size \(h=1/100\) and the \(x\)-periodic potential \(V(x,y) = 10^2 \sin{(\pi x)}^2 y^2\). We chose the start vectors \(\boldsymbol{x}_0 = \boldsymbol{1}, \boldsymbol{x}_{-1} = \boldsymbol{e}_1\). The solvers aim to reduce the spectral residual \(\boldsymbol{r}_k = \boldsymbol{A} \boldsymbol{x}_k - R_{\boldsymbol{A},\boldsymbol{B}}(\boldsymbol{x}_k) \boldsymbol{B} \boldsymbol{x}_k\) below the tolerance \(\texttt{TOL} = 10^{-10}\) and stop after 100 iterations. Both algorithms converged to the lowest eigenpair since the shifting strategy is order-preserving (c.f.~\cref{rem:absoluteOrderingPreserved}), and the start vector \(\boldsymbol{x}_0\) can not be orthogonal to the non-sign-changing ground state.
\par
The results in \cref{fig:preconditionerTest} indicate the drastic reduction in convergence speed for the unshifted algorithms. For the case of quasi-optimal preconditioning, both eigensolvers only need a couple of iterations to converge, as predicted by \cref{thm:fundamentalRatioBoundedNewNew}. When applying a good but not quasi-optimal shift of \(0.99 \lambda_\infty\), fast convergence rates for lower values of \(L\) can be observed. However, the convergence also deteriorates in the asymptotic limit of \(L \to \infty\). This fact underlines the requirement for \(\sigma\) to be the exact asymptotic limit if the method shall provide convergence in a fixed number of iterations for all possible \(L\). Furthermore, all three cases show a faster convergence of the LOPCG compared to the IP method.
\begin{figure}[t]
  \centering
  \newcommand{\datapath}{./plots/preconditioner-test/errors}%



\begin{tikzpicture}

    \definecolor{color1}{rgb}{0,    ,0.4470,0.7410}
    \definecolor{color2}{rgb}{0.8500,0.3250,0.0980}
    \definecolor{color3}{rgb}{0.9290,0.6940,0.1250}
    \definecolor{color4}{rgb}{0.4940,0.1840,0.5560}
    \definecolor{color5}{rgb}{0.4660,0.6740,0.1880}
    \definecolor{color6}{rgb}{0.3010,0.7450,0.9330}
    \definecolor{color7}{rgb}{0.6350,0.0780,0.1840}
    \pgfplotscreateplotcyclelist{matlab}{
      color1,every mark/.append style={solid},mark=*\\
      color2,every mark/.append style={solid},mark=square*\\
      color3,every mark/.append style={solid},mark=triangle*\\
      color4,every mark/.append style={solid},mark=halfsquare*\\
      color5,every mark/.append style={solid},mark=pentagon*\\
      color6,every mark/.append style={solid},mark=halfcircle*\\
      color7,every mark/.append style={solid,rotate=180},mark=halfdiamond*\\
      color1,every mark/.append style={solid},mark=diamond*\\
      color2,every mark/.append style={solid},mark=halfsquare right*\\
      color3,every mark/.append style={solid},mark=halfsquare left*\\
    }

  \begin{groupplot}[
    group style={
      group size= 3 by 2,
      horizontal sep=0.15cm,
      vertical sep=0.45cm,
    },
    ymin=1E-11,
    ymax=1E+1,
    xmode=linear,
    ymode=log,
    legend style={font=\normalsize},
    legend pos=south west,
    legend columns=6,
    height=3.75cm,
    width=5.15cm,
    cycle list name=matlab,
    ticklabel style = {font=\footnotesize},
    grid=major,
    ]

    \nextgroupplot[
      xmin=-1,xmax=21,
      title={$\sigma=0$},
      legend to name=legend_precondtioner_test,
    ]
    \foreach \Lx in {1.0,2.0,4.0,8.0,16.0,31.41}{
      \addplot+[
      ] table [
        x index=0,
        y index=2,
        col sep=comma,
        skip first n=1,
      ] {\datapath/errors_IP_000_\Lx.csv};
    }
    \legend{$L=1$,$L=2$,$L=4$,$L=8$,$L=16$,$L=31.41$}

    \nextgroupplot[
      xmin=-1,xmax=21,
      title={$\sigma=0.99\lambda_\infty$},
      yticklabels={},
    ]
    \foreach \Lx in {1.0,2.0,4.0,8.0,16.0,31.41}{
      \addplot+[
      ] table [
        x index=0,
        y index=2,
        col sep=comma,
        skip first n=1,
      ] {\datapath/errors_IP_099_\Lx.csv};
    }

    \nextgroupplot[
      xmin=-1,xmax=21,
      title={$\sigma=\lambda_\infty$},
      yticklabels={},
      ylabel=$\text{IP}_\sigma$: ${\|\boldsymbol{r}_k\|}_2$,
      yticklabel pos=right,
    ]
    \foreach \Lx in {1.0,2.0,4.0,8.0,16.0,31.41}{
      \addplot+[
      ] table [
        x index=0,
        y index=2,
        col sep=comma,
        skip first n=1,
      ] {\datapath/errors_IP_100_\Lx.csv};
    }

    \nextgroupplot[
      xmin=-1,xmax=21,
    ]
    \foreach \Lx in {1.0,2.0,4.0,8.0,16.0,31.41}{
      \addplot+[
      ] table [
        x index=0,
        y index=2,
        col sep=comma,
        skip first n=1,
      ] {\datapath/errors_LOPCG_000_\Lx.csv};
    }

    \nextgroupplot[
      xmin=-1,xmax=21,
      yticklabels={},
    ]
    \foreach \Lx in {1.0,2.0,4.0,8.0,16.0,31.41}{
      \addplot+[
      ] table [
        x index=0,
        y index=2,
        col sep=comma,
        skip first n=1,
      ] {\datapath/errors_LOPCG_099_\Lx.csv};
    }
    \nextgroupplot[
      xmin=-1,xmax=21,
      yticklabels={},
      ylabel=$\text{LOPCG}_\sigma$: ${\|\boldsymbol{r}_k\|}_2$,
      yticklabel pos=right,
    ]
    \foreach \Lx in {1.0,2.0,4.0,8.0,16.0,31.41}{
      \addplot+[
      ] table [
        x index=0,
        y index=2,
        col sep=comma,
        skip first n=1,
      ] {\datapath/errors_LOPCG_100_\Lx.csv};
    }

  \end{groupplot}

  \coordinate (c3) at ($(group c1r1)!.5!(group c3r1)$);
  \node[below] at (c3 |- current bounding box.south)
    {\pgfplotslegendfromname{legend_precondtioner_test}};

\end{tikzpicture}

  \caption{
    A comparison of the \(\text{IP}_{\sigma}\) and \(\text{LOPCG}_\sigma\) for the cases of \(\sigma=0\), \(\sigma=0.99 \lambda_\infty\), and \(\sigma=\lambda_\infty\) for different domain lengths \(L\).
  }\label{fig:preconditionerTest}
\end{figure}
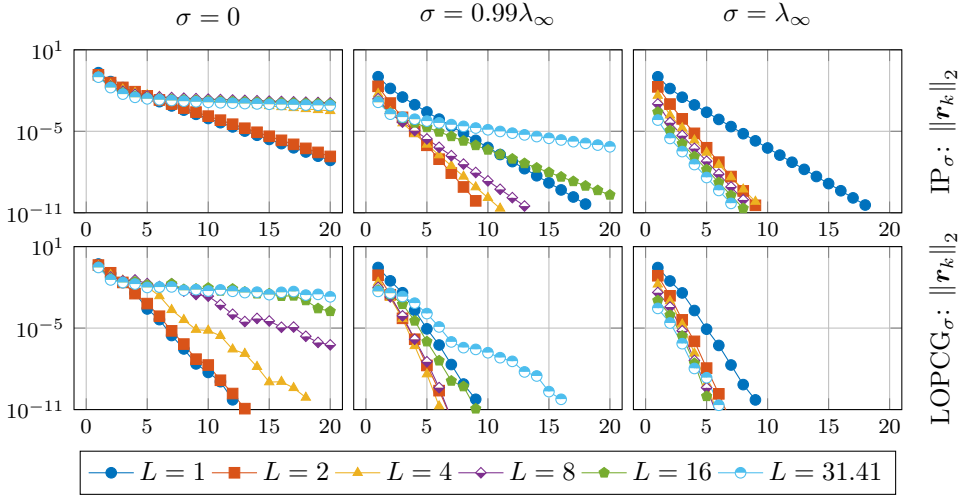

\subsection{Extension to Complex Domains: Barrier Principle and Defects in \texorpdfstring{\(\boldsymbol{x}\)}{x}-Direction}\label{ssec:experimentBarrierPotentialXDefects}
The initial box setup of \(\Omega_L = {(0, L)}^{p} \times {(0, \ell)}^{q}\) from \cref{sec:introduction} is very suitable for the mathematical analysis performed in \cref{sec:factorization}. However, we need to generalize the theory to more realistic domains for practical applications. Luckily, this can be quite intuitively done with some simple considerations.
\par
\begin{figure}[t]
  \centering
  \subfloat[
    The geometry of \(\tilde{\Omega}\).
  ]{



\begin{tikzpicture}[scale=1.105, rotate=0]

  \IfFileExists{./style.tex}%
  {\input{./style.tex}}
  {\input{plots/sketches/style.tex}}

  \pgfmathsetmacro{\solidThickness}{0.10};
  \pgfmathsetmacro{\R}{1.0};
  \pgfmathsetmacro{\N}{3};
  \pgfmathsetmacro{\overlap}{0.1};
  \pgfmathsetmacro{\centerdist}{2*(\R-\overlap)};
  \pgfmathsetmacro{\angleTopR}{acos(0.9)};
  \pgfmathsetmacro{\angleBotR}{360-acos(0.9)};
  \pgfmathsetmacro{\angleTopL}{acos(-0.9)};
  \pgfmathsetmacro{\angleBotL}{360-acos(-0.9)};
  \pgfmathsetmacro{\yOffset}{sin(\angleBotR)};

  \draw [fRegion] (0,-\R) rectangle (\N*\centerdist+2*\overlap,\R);
  \draw [bRegion] (0,-\R-\solidThickness) rectangle (\N*\centerdist+2*\overlap,-\R);
  \draw [bRegion] (0,+\R+\solidThickness) rectangle (\N*\centerdist+2*\overlap,+\R);
  \draw [bRegion] (0-\solidThickness,-\R-\solidThickness) rectangle (0,\R+\solidThickness);
  \draw [bRegion] (\N*\centerdist+2*\overlap,-\R-\solidThickness) rectangle (\N*\centerdist+2*\overlap+\solidThickness,\R+\solidThickness);

  \foreach \i in {1,...,\N}{
    \draw[point] (\R + \i * \centerdist - \centerdist,0) circle (0.05) node [below right] {\(\boldsymbol{c}_\i\)};
    \draw[point] (\R + \i * \centerdist - \centerdist,0) circle (0.05);
    \draw node at (\R + \i * \centerdist - \centerdist,-\R) [above] {\(\tilde{\Omega}_\i\)};
    \draw [fRegion]
    (\R + \i * \centerdist - \centerdist,0) ++(\angleTopL:\R) arc (\angleTopL:\angleTopR:\R);
    \draw [fRegion]
    (\R + \i * \centerdist - \centerdist,0) ++(\angleBotL:\R) arc (\angleBotL:\angleBotR:\R);
    \draw [fRegion,dashed] (\R + \i * \centerdist - 3*\centerdist/2,-\R) -- (\R + \i * \centerdist - 3*\centerdist/2,\R);
    \draw [fRegion,dashed] (\R + \i * \centerdist - \centerdist/2,-\R) -- (\R + \i * \centerdist - \centerdist/2,\R);
    \draw [fRegion] (\R + \i * \centerdist - 3*\centerdist/2,-\yOffset) -- (\R + \i * \centerdist - 3*\centerdist/2,\yOffset);
    \draw [fRegion] (\R + \i * \centerdist - \centerdist/2,-\yOffset) -- (\R + \i * \centerdist - \centerdist/2,\yOffset);
  }

  \draw [fRegion, fill=red, fill opacity=0.25]
  (\R + 1 * \centerdist - \centerdist,0) ++(\angleTopL:\R) arc (\angleTopL:\angleBotL:\R);
  \draw node [above right, red] at (\overlap,0) {\(\tilde{\Omega}_{\text{left}}\)};

  \draw [fRegion, fill=red, fill opacity=0.25]
  (\R + \N * \centerdist - \centerdist,0) ++(\angleBotR:\R) arc (\angleBotR-360:\angleTopR:\R);
  \draw node [above left, red] at (\R + \N * \centerdist - \centerdist + \R - \overlap,0) {\(\tilde{\Omega}_{\text{right}}\)};

  \draw[dim] (\R,0) -- ++(0,\R);
  \draw node [right] at (\R,0.5*\R) {\(R\)};
  \draw[dim] (\R,0) -- (\R+\R-\overlap,0);
  \draw node [above] at ({\R+0.5*(\R-\overlap)},0) {\(r\)};

\end{tikzpicture}

  \subfloat[
    A schematic discretization of \(\tilde{\Omega}\).\label{sfig:unionOfDiskWithDefectsMESH}
  ]{\includegraphics[width=0.5\linewidth]{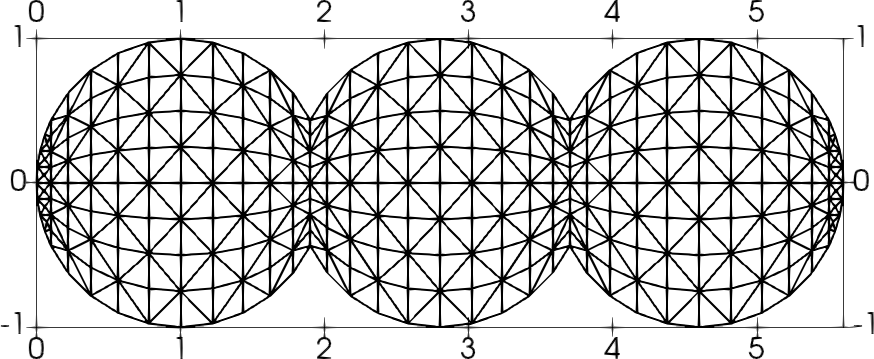}}
  \caption{
    A union of three disks (\(R=1\)) domain with defects in the \(x\)-direction and overlap of \(d=0.1\): \(\tilde{\Omega}\) comprises three identical unit cells \(\tilde{\Omega}_i\) and two domain defects \(\tilde{\Omega}_{\text{left}},\tilde{\Omega}_{\text{right}}\).
  }\label{fig:unionOfDiskWithDefects}
\end{figure}
\par
Consider, for example, the setup of \cref{fig:unionOfDiskWithDefects}, in which one aims to simulate a union of three disks \(\tilde{\Omega} = \bigcup_{i=1}^{3} B_{R}({(R+2(i-1)r,0)}^T) = \tilde{\Omega}_{\text{left}} \cup \left( \bigcup_{i=1}^{3} \tilde{\Omega}_i \right) \cup \tilde{\Omega}_{\text{right}}\)
where \(B_{R}(\boldsymbol{p})\) denotes a disk with radius \(R\) centered at \(\boldsymbol{p}\) and \(r = R - d\) with the overlap \(d\). These disks are all aligned along the \(x\)-axis and have a fixed overlap. We define the rectangular unit cell as the box with side lengths \(\{2r,2R\}\), where one disk is contained entirely. Inside this unit cell, we assume the potential as directional periodic. Furthermore, we have domain defects \(\tilde{\Omega}_{\text{left}}\) and \(\tilde{\Omega}_{\text{right}}\) that are not part of any unit cell on the left and the right side. In this setup, two problems arise \textendash\ the simulation of non-box-shaped domains and the handling of domain defects.

\subsubsection{Barrier Principle for an Optical Lattice Potential}
We could simulate the whole domain \(\Omega_{L=6r+2d}\) to overcome the first issue. However, we are only interested in the union-of-disks domain \(\tilde{\Omega}\), and a prescription of Dirichlet values on \(\partial \tilde{\Omega}\) might be problematic since it is inside the domain. It is well known that we can simply modify the potential \(V\) to achieve this setting. To avoid nontrivial values of \(\phi_L\) in certain regions, we can apply a significant penalty term to \(V\). We call this strategy the \textit{barrier principle}, which extends a given potential \(V\) to the barrier potential \(\tilde{V}(\boldsymbol{z};V,a) = V(\boldsymbol{z}) + a \chi_{\tilde{\Omega}^{\text{c}}}(\boldsymbol{z})\) where \(a \ge 0 \) is a penalty term. \(\chi_{\tilde{\Omega}^{\text{c}}}\) is the indicator function for the complement of \(\tilde{\Omega}\). For an increasing value of \(a \to \infty\), we can still apply our theory for any finite value of \(a\). In the limit case, the eigenvalue problem on the box-shaped domain \(\Omega_{6r+2d}\) is equivalent to an eigenvalue problem, purely posed on the subdomain \(\tilde{\Omega} \subset \Omega_{6r+2d}\).
\begin{figure}[t]
  \centering
  \includegraphics[width=0.49\linewidth]{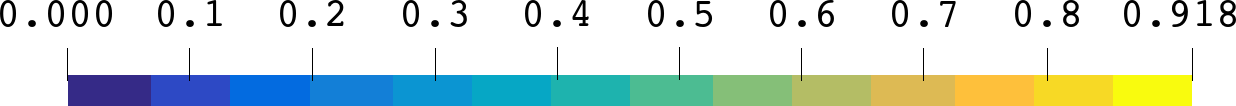}
  \includegraphics[width=0.49\linewidth]{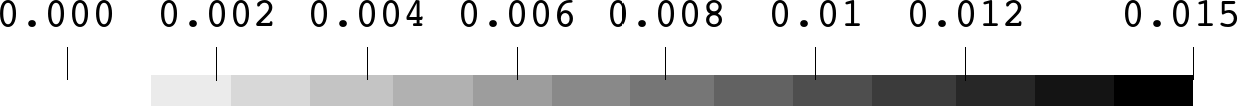}
  \\[-1ex]
  \subfloat[
    \(a=2^{0}\).\label{sfig:barrierPhi0}
  ]{\includegraphics[width=0.33\linewidth]{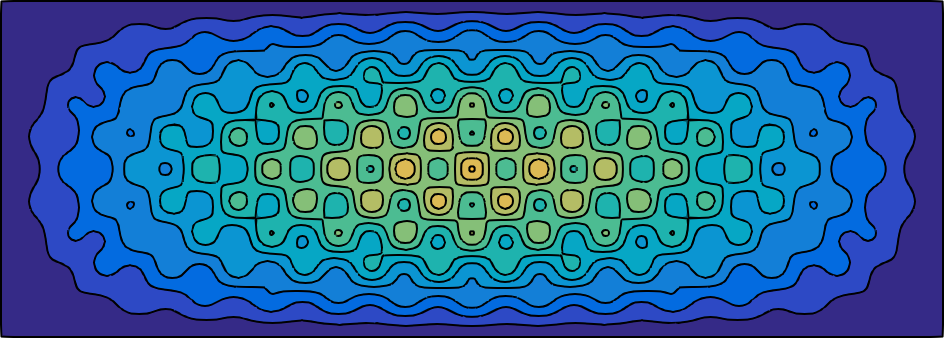}}
  \hfill
  \subfloat[
    \(a=2^{5}\).\label{sfig:barrierPhi1}
  ]{\includegraphics[width=0.33\linewidth]{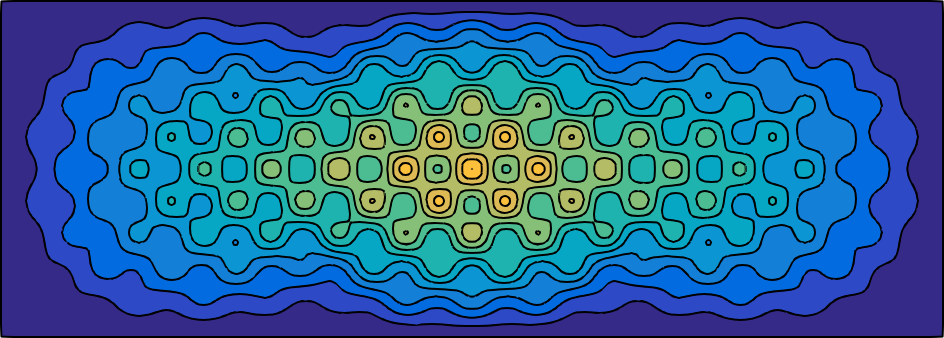}}
  \hfill
  \subfloat[
    \(a=2^{10}\).\label{sfig:barrierPhi2}
  ]{\includegraphics[width=0.33\linewidth]{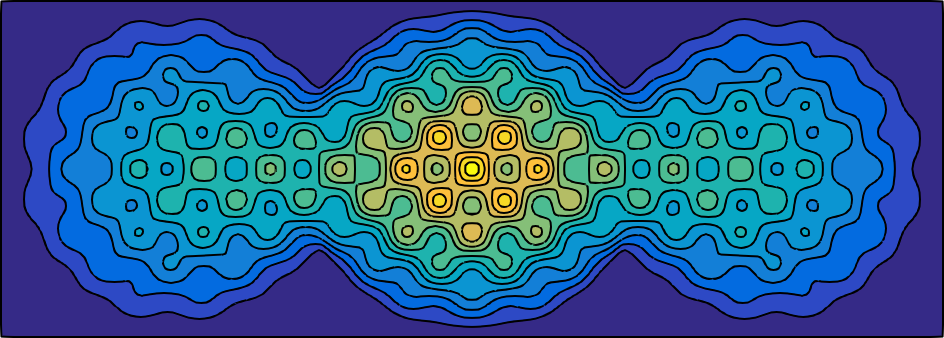}}
  \\[-1.5ex]
  \subfloat[
    \(a=2^{15}\).\label{sfig:barrierPhi3}
  ]{\includegraphics[width=0.33\linewidth]{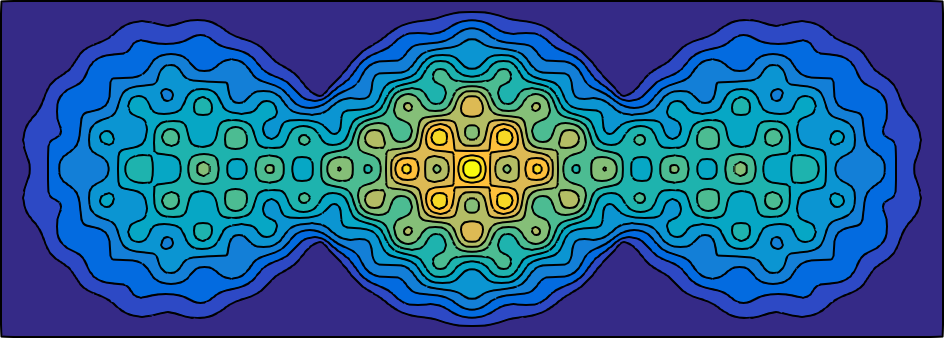}}
  \hfill
  \subfloat[
    \(a=2^{20}\).\label{sfig:barrierPhi4}
  ]{\includegraphics[width=0.33\linewidth]{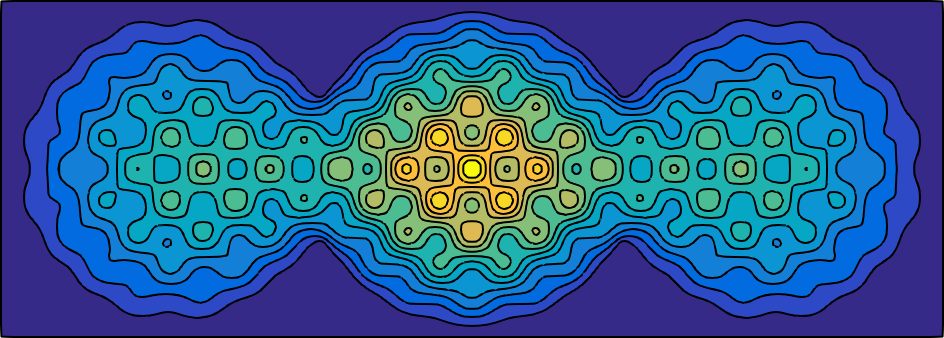}}
  \hfill
  \subfloat[
    \(\left|( \phi|_{a=2^{25}} - \phi|_{a=2^{20}} )\right|\).\label{sfig:barrierError}
  ]{\includegraphics[width=0.33\linewidth]{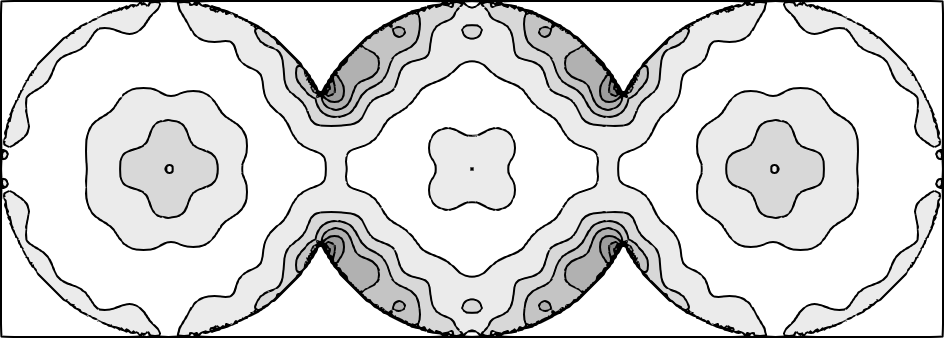}}
  \caption{
    Effect of Barrier Potential \(\tilde{V}(\boldsymbol{z};0,a)\) for varying penalty parameters \(a\) in the union-of-disks domain \(\tilde{\Omega}\) of \cref{fig:unionOfDiskWithDefects}. With increasing \(a\), the resulting problem statement reduces to the eigenproblem formulated in \(\tilde{\Omega}\). When comparing the change between \(a=2^{15}\) and \(a=2^{20}\) in \cref{sfig:barrierError}, the solution's overall change is small and focused on the connection points. Also, we see an interpolation error at the disk boundary since the underlying mesh is not boundary-aligned.
  }\label{fig:barrierTrickDomains}
\end{figure}
\par
To demonstrate the barrier effect of \(\tilde{V}(\boldsymbol{z};V,a)\), we inspect the union of three disks case from \cref{fig:unionOfDiskWithDefects} in combination with the optical lattice potential~\cite{henningSobolevGradientFlow2020}
\begin{equation}
  V(x,y) = 100 \left(
    1  -
    \sin{
      \tfrac{\omega \pi (x - d)}{2 (R-d)}
    }
    \sin{
      \tfrac{\omega \pi (y - (R - d))}{2 (R-d)}
    }
  \right)
  ,
\end{equation}
where \(\omega=9,R=1,d=0.1\) for various penalty terms \(a \in \{2^{0},2^{5},\dots,2^{20}\}\). \cref{fig:barrierTrickDomains} shows the first eigenfunction, which is calculated with the \(\text{LOPCG}_{\sigma=0}\) method for \(\texttt{TOL}=10^{-10}\) on a structured \(\mathbb{Q}_1\)-mesh with \(h=1/100\). It can be observed that with \(a\) increasing, the eigenfunction outside of \(\tilde{\Omega}\) approaches zero. However, these above considerations are purely theoretical. In practice, we directly exclude the regions \(\Omega_{6r+2d} \setminus \tilde{\Omega}\) and purely solve and mesh on \(\tilde{\Omega}\) as displayed in \cref{sfig:unionOfDiskWithDefectsMESH}.

\subsubsection{Principle of Defect Invariance}
When it comes to simulating the domain of \cref{ssec:experimentBarrierPotentialXDefects} with the quasi-optimally preconditioned eigensolver, we also need to calculate the shift \(\sigma = \lambda_\infty\). For domains without any domain defect that perfectly match the potential's period, this is done by simulating one unit cell \(\Omega_1\) with Dirichlet zero in \(\boldsymbol{y}\)- and periodic boundary conditions in \(\boldsymbol{x}\)-direction as theoretically derived in \cref{thm:factorization,thm:asymptoticBehaviorOfXDirectionNew}.
\par
For the case of defects located at the extremities of the expanding \(\boldsymbol{x}\)-direction as a subset of an imaginary unit cell \(\Omega_1\), we can do the same (if the potential acts as usual in the defect regions). The limit eigenvalue does not change since we can prove:
\begin{theorem}[Principle of Defect Invariance]\label{thm:defectPrinciple}
  Let \(\Omega_{L+2\delta}\) with \(L\) denoting the \(\boldsymbol{x}\)-period of the potential \(V\) and \(\delta < L\) be given. For the linear Schrödinger eigenvalue problem \cref{eq:schroedingerEquation} posed on \(\Omega_{L+2\delta}\), it still holds that \(\lim_{L \to \infty} \lambda_L^{(m)} = \lambda_{\varphi_y}\).
\end{theorem}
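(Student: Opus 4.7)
The plan is to prove \cref{thm:defectPrinciple} by a Dirichlet-monotonicity sandwich argument, comparing the defected domain against two defect-free periodic domains whose sizes differ by at most a bounded number of unit cells.

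The first step is to set up a nested inclusion. Since the potential $V$ extends periodically across the whole expanding direction and the defect width $\delta$ is strictly less than one period, the defect region can be completed into a full periodic cell on each side. Concretely, I would exhibit the chain $\Omega_L \subset \Omega_{L+2\delta} \subset \Omega_{L+2L_0}$ where $L_0$ denotes the period (so that $2L_0 > 2\delta$ by the hypothesis $\delta < L$), all three domains equipped with the same periodically extended $V$ and homogeneous Dirichlet boundary conditions on their respective $\boldsymbol{x}$-boundaries. Because the potential matches pointwise on the common region and the boundary conditions are Dirichlet, extension-by-zero embeds $H^1_0$ of each smaller domain into $H^1_0$ of each larger one.

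The second step is to apply the min-max principle recalled after \cref{def:solutionPrototype}. The Rayleigh quotient $\mathcal R_{1,V}$ is invariant under extension-by-zero, so feeding the variational subspaces of the smaller domain into the min-max expression for the larger domain yields the monotonicity
\begin{equation*}
  \lambda_{\mathcal B_d,\mathcal B_d,1,V}^{(m)}(\Omega_{L+2L_0})
  \;\le\;
  \lambda_{\mathcal B_d,\mathcal B_d,1,V}^{(m)}(\Omega_{L+2\delta})
  \;\le\;
  \lambda_{\mathcal B_d,\mathcal B_d,1,V}^{(m)}(\Omega_L).
\end{equation*}
The third step is to take $L\to\infty$ on the outer terms. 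Both $\Omega_L$ and $\Omega_{L+2L_0}$ are pure periodic boxes of the form considered in \cref{sec:factorization}, so \cref{thm:factorization} factorizes each eigenvalue as $\lambda_{\varphi_y}+\lambda_{u_{y,2}}^{(m)}$, and \cref{thm:asymptoticBehaviorOfXDirectionNew} gives $\lambda_{u_{y,2}}^{(m)}=\mathcal O(1/L^2)$. Both bounds therefore tend to the same limit $\lambda_{\varphi_y}$, and the squeeze theorem forces the defected eigenvalue to the same limit, which is the desired conclusion.

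The only conceptual subtlety, rather than a real obstacle, is making sure the bounding domain on the right actually contains $\Omega_{L+2\delta}$: this is where the hypothesis $\delta < L_0$ is used, ensuring that a single additional period on each side suffices (if one wanted $\delta$ arbitrary, one would replace $L_0$ by $\lceil \delta/L_0\rceil L_0$, which is still a fixed constant and the argument is unchanged). No new estimates beyond \cref{thm:factorization,thm:asymptoticBehaviorOfXDirectionNew} are required, and in particular the quasi-optimal shift $\lambda_{\varphi_y}$ continues to be computed from the same unit-cell problem on $\Omega_1$, independent of the presence of the defects.
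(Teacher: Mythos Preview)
Your proposal is correct and follows essentially the same route as the paper: a domain-monotonicity sandwich $\Omega_L \subset \Omega_{L+2\delta} \subset \Omega_{L+2L_0}$, then \cref{thm:factorization} plus \cref{thm:asymptoticBehaviorOfXDirectionNew} on the two outer periodic boxes, and the squeeze theorem. In fact you state the Dirichlet inequalities in the correct direction (larger domain $\Rightarrow$ smaller eigenvalue), whereas the paper's displayed inequality is reversed; this is immaterial for the squeeze argument but your version is the accurate one.
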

\begin{proof}
  Consider \(\Omega_L \subset \Omega_{L+2\delta} \subset \Omega_{L+2}\). Then, by the inclusion principle~\cite[p13]{henrotExtremumProblemsEigenvalues2006} for elliptic operators with Dirichlet boundary conditions, we have
  \begin{align}
    \lambda_{\mathcal{B}_d,\mathcal{B}_d,0,V}^{(m)}(\Omega_{L})
    \le
    \lambda_{\mathcal{B}_d,\mathcal{B}_d,0,V}^{(m)}(\Omega_{L+2\delta})
    \le
    \lambda_{\mathcal{B}_d,\mathcal{B}_d,0,V}^{(m)}(\Omega_{L+2})
  \end{align}
  Using the factorizations of \cref{thm:factorization}, this is equivalent to
  \begin{align}
    \lambda_{\varphi_y}^{(1)}(\Omega_{L}) + \lambda_{u_{y,2}}^{(m)}(\Omega_{L})
    \le
    \lambda_{\mathcal{B}_d,\mathcal{B}_d,0,V}^{(m)}(\Omega_{L+2\delta})
    \le
    \lambda_{\varphi_y}^{(1)}(\Omega_{L+2}) + \lambda_{u_{y,2}}^{(m)}(\Omega_{L+2})
    .
  \end{align}
  Since \(\lambda_{\varphi_y}^{(1)}(\Omega_{L}) = \lambda_{\varphi_y}^{(1)}(\Omega_{L+2}) = \lambda_{\varphi_y}^{(1)}(\Omega_1)\) and \(\lambda_{u_{y,2}}^{(m)}(\Omega_{L}), \lambda_{u_{y,2}}^{(m)}(\Omega_{L+2}) \in \mathcal{O}(1/L^2)\) by \cref{thm:asymptoticBehaviorOfXDirectionNew}, we conclude with the Sandwich Lemma.
\end{proof}
We are now prepared to solve the union-of-disks geometry in the next \cref{ssec:experiment1dChain}.

\subsection{Chain Model with Truncated Coulomb Potential in Two Dimensions}\label{ssec:experiment1dChain}
Consider the domain \(\tilde{\Omega}_N = \bigcup_{i=1}^{N} B_{R}\left({(R+2(i-1)r,0)}^T\right)\) with the parameters \(R=1, r=0.9\). Chain-like molecules in the context of molecular simulations inspire this model. For real applications, the potential is a Coulomb potential. Since a singularity of \(V\) violates the assumption (A2), we use a truncated Coulomb potential as
\begin{equation}\label{eq:truncatedCoulombPotential}
  V_{\text{C,lim}}(\boldsymbol{z}; b)
  =
  \begin{cases}
    - \tfrac{Z}{{\|\boldsymbol{z}\|}_2} & \text{for } {\|\boldsymbol{z}\|}_2 \ge b
    \\
    - \tfrac{Z}{b} & \text{for } {\|\boldsymbol{z}\|}_2 < b
  \end{cases}
  ,
\end{equation}
to mimic, e.g., the electrostatic potential with charge \(Z>0\). We also have to neglect long-range interaction to fulfill the periodicity assumption on \(V\). Consider the \(N\) centers \({\{\boldsymbol{c}_i\}}_{i=1}^N\) with \(\boldsymbol{c}_i = (R + (i-1) 2r,0)\). We prescribe the compound periodic potential \(V(\boldsymbol{z}) = \sum_{i \in \{i : |\boldsymbol{z}-\boldsymbol{\tilde{c}}_i|<R\} } V_{\text{C,lim}}(\boldsymbol{z} - \boldsymbol{\tilde{c}}_i; b)\) where \(\boldsymbol{\tilde{c}}_i \in {\{\boldsymbol{c}_i\}}_{i=1}^N \cup \{\boldsymbol{c}_1 - (2r,0)\} \cup \{\boldsymbol{c}_N + (2r,0)\} \) include ghost centers to fulfill the periodicity assumption (A1) also in the defect regions. We also note that the semi-positivity assumption (A3) is violated. However, it turned out that the resulting spectrum is still positive, the operator, thus, elliptic, and our theory is applicable.
Nevertheless, it would also be possible to fulfill the assumption (A3) by adding a positive constant to the \(L^\infty\)-potential without changing the resulting eigenfunctions.
\par
A series of computations for \(N=1,2,4,\dots,32\) is performed using the \(\text{LOPCG}_\sigma\) method for the potential parameters \(Z=1,b=10^{-4}\) and the tolerance \(\texttt{TOL} = 10^{-10}\). As shown in \cref{sfig:unionOfDiskWithDefectsMESH}, the spatial discretization uses unstructured but symmetrically meshed \(\mathbb{P}_1\) elements.
The quasi-optimal shift calculation uses the unit cell \(\tilde{\Omega}_0 = \tilde{\Omega}_1 \cap [R-r,R-r] \times [-R,R]\) with periodic boundary conditions in \(x\)-direction and zero boundary conditions on the rest. Thus, \(\sigma = \lambda_{\varphi_y}(\tilde{\Omega}_0) \approx 1.08784\) can be computed a priori in \(\mathcal{O}(1)\) since the unit cell is independent of \(N\).
\par
In \cref{fig:simulationUnionOfSpeheres}, the resulting ground state eigenfunctions \(\phi_h^{(1)}\) are presented with the solution to the base problem in \cref{sfig:simulationUnionOfSpeheresLIMIT}. We can observe for increasing \(N\) that the solution inside a single disk approaches the shape of the solution to the unit cell problem. This convergence is the expected behavior, as shown theoretically in \cref{sec:factorization}. In \cref{tab:unionOfDisksSummary}, we observe that, due to the quasi-optimal preconditioning, the number of iterations needed to meet the required residual tolerance is of order \(\mathcal{O}(1)\). Furthermore, the method shows a linearly scaling behavior since the ratio of calculation time to the disk amount \(t_{\text{eig}}/N\) seems to be independent of \(N\).
\begin{figure}[t]
  \centering
  \includegraphics[width=0.5\linewidth]{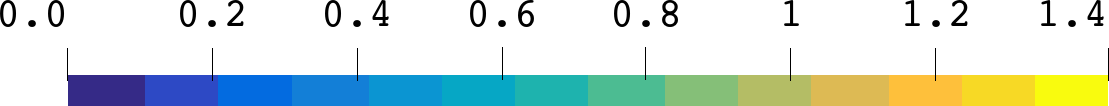}
  \\[-1ex]
  \subfloat[
    Limit.\label{sfig:simulationUnionOfSpeheresLIMIT}
  ]{\includegraphics[height=0.13\linewidth]{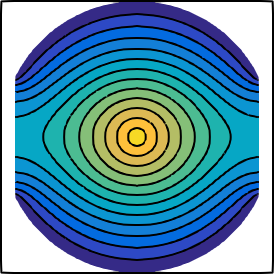}}
  \hfill
  \subfloat[
    \(N=1\).\label{sfig:simulationUnionOfSpeheresN1}
  ]{\includegraphics[height=0.13\linewidth]{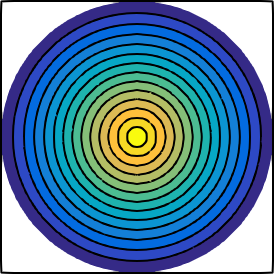}}
  \hfill
  \subfloat[
    \(N=2\).\label{sfig:simulationUnionOfSpeheresN2}
  ]{\includegraphics[height=0.13\linewidth]{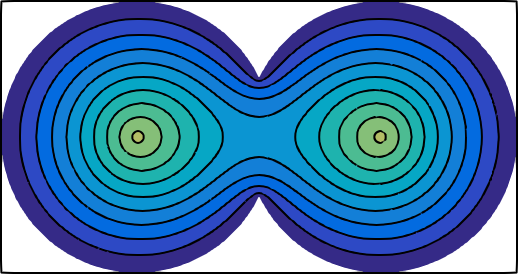}}
  \hfill
  \subfloat[
    \(N=4\).\label{sfig:simulationUnionOfSpeheresN4}
  ]{\includegraphics[height=0.13\linewidth]{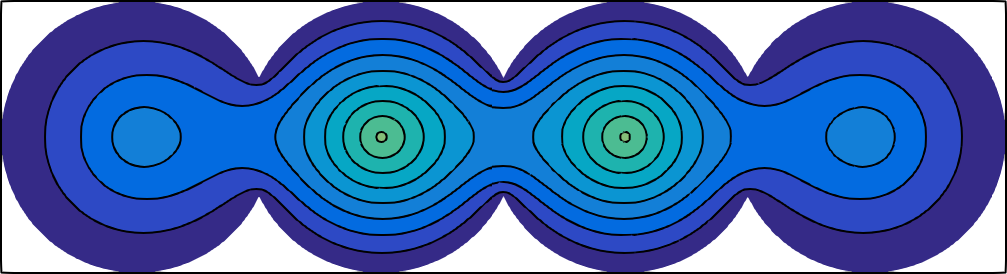}}
  \\
  \subfloat[
    \(N=8\).\label{sfig:simulationUnionOfSpeheresN8}
  ]{\includegraphics[height=0.13\linewidth]{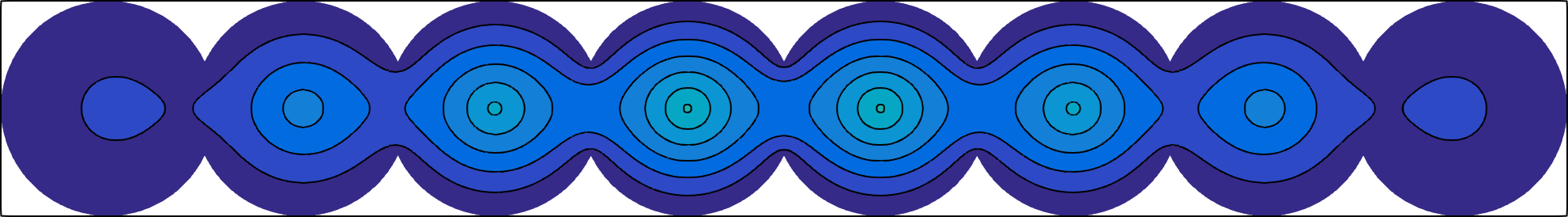}}
  \caption{
    Contours of the first eigenfunction for the union of \(N\) disks using the truncated Coulomb potential without long-range interactions: \cref{sfig:simulationUnionOfSpeheresLIMIT} shows the asymptotic limit eigenfunction with periodic boundary conditions in the \(x\)-direction.
  }\label{fig:simulationUnionOfSpeheres}
\end{figure}
\begin{table}[t]
  \small
  \centering
  \caption{
    The summary of computations for the union of \(N\) disks with the truncated Coulomb potential. Due to the same discretization density, the number of nodes \(n_{\text{nodes}}\) for each mesh is approximately proportional to the number of disks \(N\) (up to the defects). The wall times are measured on an \text{Intel X7542} CPU using one core.
  }\label{tab:unionOfDisksSummary}
  \pgfplotstabletypeset[
    col sep = comma,
    every head row/.style={before row=\toprule,after row=\midrule},
    every last row/.style={after row=\bottomrule},
    columns/N/.style={column name=\(N \propto L\)},
    columns/nn/.style={column name=\(n_{\text{nodes}}\)},
    columns/lambda1/.style={precision=5, column name=\(\lambda_h^{(1)}\)},
    columns/maxPhi1/.style={column name=\(\max \phi_h^{(1)}\)},
    columns/kit/.style={column name=\(k_{\text{it}}\)},
    columns/cond/.style={column name=\(\tfrac{\lambda_{\max}(\boldsymbol{A} - \sigma \boldsymbol{B})}{\lambda_{\min}(\boldsymbol{A} - \sigma \boldsymbol{B})}\)},
    columns/t/.style={column name=\(t_{\text{eig}}\) \([s]\)},
    columns/tnrat/.style={column name=\(\tfrac{t_{\text{eig}}}{N}\) \([s]\)},
  ]
  {./tables/union-of-spheres.csv}
\end{table}

\subsection{Plane Model with Kronig--Penney Potential in Three Dimensions}\label{ssec:experiment2dPlane}
Finally, we also show the preconditioner's quasi-optimality for a three-dimensional case with two expanding directions (\(p=2\), \(q=1\)). We use a three-dimensional Kronig--Penney potential~\cite{vargaComputationalNanoscienceApplications2011}, defined by \(V(\boldsymbol{z}) = 0\) for \({\|\boldsymbol{z} \bmod 1 - \boldsymbol{1}/2 \|}_1 < 1/4\) and \(V(\boldsymbol{z}) = 100\) otherwise,
where \(\boldsymbol{1}\) denotes the vector of ones in \(d\) dimensions. This potential represents cubic wells with sidelength \(0.5\) centered in the unit cubes, which form the plane-like expanding domain \(\Omega_L={(0,L)}^2 \times {(0,1)}\) with \(L \in \mathbb{N}\). We again calculate the quasi-optimal shift \(\sigma = \lambda_{\mathcal{B}_\#,\mathcal{B}_d,1,V}^{(1)}(\Omega_1)\) on the unit cube and use it to precondition the \(\Omega_L\)-problem. Finally, both problems are discretized using a uniform mesh size of \(h=1/10\) and \(\mathbb{Q}_1\) elements resulting in \(\sigma \approx 57.60485\). We use the \(\text{LOPCG}_{\sigma}\) method with \(\texttt{TOL}=10^{-10}\) and solve for the ground state solution.
The simulations are performed on a series of domains \(\Omega_L\) with \(L \in \{1,2,4,\dots,32\}\). In \cref{tab:2dKronigPenney}, we observe that the number of eigensolver iterations \(k_{\text{it}}\) does not increase for \(L \to \infty\), confirming our theory. However, in contrast to \cref{tab:unionOfDisksSummary}, a slight increase in solution time per number of unit cells (\(t_{\text{eig}}/L^2\)) can be observed. This increase is the expected behavior of using a direct solver for sparse matrices with increased bandwidth for \(L \to \infty\) for our case of \(p=2\) expanding directions.
\par
\begin{table}[t]
  \small
  \centering
  \caption{
    The summary of computations for the plane-like expanding domain in three directions with the Kronig--Penney potential. The number of unit cells \(N\) now scales quadratically with \(L\).
  }\label{tab:2dKronigPenney}
  \pgfplotstabletypeset[
    col sep = comma,
    every head row/.style={before row=\toprule,after row=\midrule},
    every last row/.style={after row=\bottomrule},
    columns/L/.style={column name=\(L\)},
    columns/nn/.style={column name=\(n_{\text{nodes}}\)},
    columns/lambda1/.style={precision=5, column name=\(\lambda_h^{(1)}\)},
    columns/maxPhi1/.style={column name=\(\max \phi_h^{(1)}\)},
    columns/kit/.style={column name=\(k_{\text{it}}\)},
    columns/t/.style={column name=\(t_{\text{eig}}\) \([s]\)},
    columns/tperdofs/.style={column name=\(\tfrac{t_{\text{eig}}}{L^2}\) \([s]\)},
  ]
  {./tables/2d-kronig-penney.csv}
\end{table}

\section{Conclusion}\label{sec:conlusion}
This work presented a quasi-optimal shift-and-invert preconditioner to solve the linear periodic Schrödinger eigenvalue problem in a constant number of eigensolver iterations for domains expanding periodically in a subset of directions. First, we analyzed and proved the quasi-optimality of the method using factorization and homogenization techniques. The analysis revealed powerful insights into the behavior of the eigenfunctions and eigenvalues. Significantly, the representation of the searched eigenfunction as the product of easy-to-calculate functions leads to a decisive result \textendash\ the corresponding eigenvalues can be expressed as the sum of other eigenvalues, which can be much easier computed in practice than solving the whole system. This realization makes the proposed method very practical since calculating the quasi-optimal shift can be done in \(\mathcal{O}(1)\). We then extended the results to complex and defect domain shapes to allow for a broader range of geometrical applications. Finally, in our experiments, we showed the practical usability of the method for chain-like and plane-like expanding domains.
\par
Limitations of our method include the assumptions on the potential \(V\) to be essentially bounded and periodic. Also, we observed that using the perfect shift in the eigensolver algorithms leads naturally to an ill-conditioned system matrix \((\boldsymbol{A} - \sigma \boldsymbol{B})\). Thus, future work could weaken the periodicity assumptions on \(V\) by, e.g., allowing for a perturbation of \(\delta V\) that vanishes in the limit \(L \to \infty\). Also, efficient solvers for the linear system involving \((\boldsymbol{A} - \sigma \boldsymbol{B})\) must be constructed when the system size requires iterative linear solvers.



\bibliographystyle{siamplain}
\bibliography{refs}
\end{document}